\let\origsection=\section \def\section{\@ifstar{\origsection*}{\mysection}}
\def\mysection{\@startsection{section}{1}\z@{.7\linespacing\@plus\linespacing}{.5\linespacing}{\normalfont\scshape\centering\S}}
\renewcommand{\PrintDOI}[1]{\doi{#1}}
\numberwithin{equation}{section}
\numberwithin{figure}{section}
\def\alabel{\upshape({\itshape \alph*\,})}
\let\polishlcross=\l
\def\l{\ifmmode\ell\else\polishlcross\fi}
\let\emptyset=\varnothing
\let\setminus=\smallsetminus
\def\moverlay{\mathpalette\mov@rlay}
\def\mov@rlay#1#2{\leavevmode\vtop{   \baselineskip\z@skip \lineskiplimit-\maxdimen
		\ialign{\hfil$\m@th#1##$\hfil\cr#2\crcr}}}
\newcommand{\charfusion}[3][\mathord]{
	#1{\ifx#1\mathop\vphantom{#2}\fi
		\mathpalette\mov@rlay{#2\cr#3}
	}
	\ifx#1\mathop\expandafter\displaylimits\fi}
\newcommand{\dcup}{\charfusion[\mathbin]{\cup}{\cdot}}
\newcommand{\bigdcup}{\charfusion[\mathop]{\bigcup}{\cdot}}
\DeclareFontFamily{U}  {MnSymbolC}{}
\DeclareSymbolFont{MnSyC}         {U}  {MnSymbolC}{m}{n}
\DeclareFontShape{U}{MnSymbolC}{m}{n}{
	<-6>  MnSymbolC5
	<6-7>  MnSymbolC6
	<7-8>  MnSymbolC7
	<8-9>  MnSymbolC8
	<9-10> MnSymbolC9
	<10-12> MnSymbolC10
	<12->   MnSymbolC12}{}
\DeclareMathSymbol{\powerset}{\mathord}{MnSyC}{180}
\DeclareMathSymbol{\YY}{\mathord}{MnSyC}{42}
\definecolor{uuuuuu}{rgb}{0.27,0.27,0.27}
\definecolor{sqsqsq}{rgb}{0.1255,0.1255,0.1255}
\let\epsilon=\varepsilon
\let\eps=\epsilon
\let\phi=\varphi
\let\rho=\varrho
\let\theta=\vartheta
\def\NN{{\mathds N}}
\def\PP{{\mathds P}}
\def\RR{{\mathds R}}
\def\ex{{\mathrm{ex}}}
\newcommand{\cD}{\mathcal{D}}
\newcommand{\cF}{\mathcal{F}}
\newcommand{\cG}{\mathcal{G}}
\newcommand{\cH}{\mathcal{H}}
\newcommand{\cK}{\mathcal{K}}
\newcommand{\cM}{\mathcal{M}}
\newcommand{\cS}{\mathcal{S}}
\newcommand{\cT}{\mathcal{T}}
\newcommand{\gM}{\mathfrak{M}}
\newtheoremstyle{note}  {4pt}  {4pt}  {\sl}  {}  {\bfseries}  {.}  {.5em}          {}
\newtheoremstyle{introthms}  {3pt}  {3pt}  {\itshape}  {}  {\bfseries}  {.}  {.5em}          {\thmnote{#3}}
\newtheoremstyle{remark}  {2pt}  {2pt}  {\rm}  {}  {\bfseries}  {.}  {.3em}          {}
\theoremstyle{plain}
\newtheorem{theorem}{Theorem}[section]
\newtheorem{lemma}[theorem]{Lemma}
\newtheorem{corollary}[theorem]{Corollary}
\newtheorem{observation}[theorem]{Observation}
\newtheorem{proposition}[theorem]{Proposition}
\newtheorem{problem}[theorem]{Problem}
\newtheorem{constr}[theorem]{Construction}
\newtheorem{fact}[theorem]{Fact}
\newtheorem{claim}[theorem]{Claim}
\theoremstyle{note}
\newtheorem{definition}[theorem]{Definition}
\theoremstyle{remark}
\newcommand*\patchAmsMathEnvironmentForLineno[1]{
	\expandafter\let\csname old#1\expandafter\endcsname\csname #1\endcsname
	\expandafter\let\csname oldend#1\expandafter\endcsname\csname end#1\endcsname
	\renewenvironment{#1}
	{\linenomath\csname old#1\endcsname}
	{\csname oldend#1\endcsname\endlinenomath}}
\newcommand*\patchBothAmsMathEnvironmentsForLineno[1]{
	\patchAmsMathEnvironmentForLineno{#1}
	\patchAmsMathEnvironmentForLineno{#1*}}
\newcommand{\overrighharpoonup}[1]{\ThisStyle{%
		\vbox {\m@th\ialign{##\crcr
				\rightharpoonupfill \crcr
				\noalign{\kern-\p@\nointerlineskip}
				$\hfil\SavedStyle#1\hfil$\crcr}}}}
\def\rightharpoonupfill{%
	$\SavedStyle\m@th\mkern+0.8mu\cleaders\hbox{$\shortbar\mkern-4mu$}\hfill\rightharpoonuptip\mkern+0.8mu$}
\def\rightharpoonuptip{%
	\raisebox{\z@}[2pt][1pt]{\scalebox{0.55}{$\SavedStyle\rightharpoonup$}}}
\def\shortbar{%
	\smash{\scalebox{0.55}{$\SavedStyle\relbar$}}}
\let\lra=\longrightarrow
\newsavebox\myboxA
\newsavebox\myboxB
\newlength\mylenA
\newcommand*\xoverline[2][0.75]{%
	\sbox{\myboxA}{$\m@th#2$}%
	\setbox\myboxB\null% Phantom box
	\ht\myboxB=\ht\myboxA%
	\dp\myboxB=\dp\myboxA%
	\wd\myboxB=#1\wd\myboxA% Scale phantom
	\sbox\myboxB{$\m@th\overline{\copy\myboxB}$}%  Overlined phantom
	\setlength\mylenA{\the\wd\myboxA}%   calc width diff
	\addtolength\mylenA{-\the\wd\myboxB}%
	\ifdim\wd\myboxB<\wd\myboxA%
	\rlap{\hskip 0.5\mylenA\usebox\myboxB}{\usebox\myboxA}%
	\else
	\hskip -0.5\mylenA\rlap{\usebox\myboxA}{\hskip 0.5\mylenA\usebox\myboxB}%
	\fi}
\DeclareSymbolFont{symbolsC}{U}{txsyc}{m}{n}
\DeclareMathSymbol{\strictif}{\mathrel}{symbolsC}{74}
\DeclareSymbolFont{stmry}{U}{stmry}{m}{n}
\DeclareMathSymbol\arrownot\mathrel{stmry}{"58}
\DeclareMathSymbol\Arrownot\mathrel{stmry}{"59}
\begin{document}
\title[Hypergraphs with many extremal configurations]{Hypergraphs with many extremal configurations}
%\date{\today}

\author{Xizhi Liu}
\address{Department of Mathematics, Statistics, and Computer Science, University of Illinois,
Chicago, IL 60607 USA}
\email{xliu246@uic.edu}
\thanks{The first and second author's research is partially supported by NSF awards 
DMS-1763317 and DMS-1952767.}

\author{Dhruv Mubayi}
\address{Department of Mathematics, Statistics, and Computer Science, University of Illinois,
Chicago, IL 60607 USA}
\email{mubayi@uic.edu}

\author{Christian Reiher}
\address{Fachbereich Mathematik, Universit\"at Hamburg, Hamburg, Germany}
\email{Christian.Reiher@uni-hamburg.de}

\subjclass[2010]{}
\keywords{hypergraph Tur\'an problems, stability, feasible regions}

%%%%%%%%%%%%%%%%%%%%%%%%%%%%%%%%%%%%%%%%%%%%%%%%%
\begin{abstract}
For every positive integer $t$ we construct a finite family of triple systems $\cM_t$,
determine its Tur\'{a}n number, and show that there are $t$ extremal $\cM_t$-free configurations
that are far from each other in edit-distance.
We also prove a strong stability theorem: every $\cM_t$-free triple system whose size is close to 
the maximum size is a subgraph of one of these $t$ extremal configurations after removing a small 
proportion of vertices. This is the first stability theorem for a hypergraph problem with an 
arbitrary (finite) number of extremal configurations. Moreover, the extremal hypergraphs have 
very different shadow sizes (unlike the case of the famous Tur\'an tetrahedron conjecture). 
Hence a corollary of our main result is that the boundary of the feasible region of $\cM_t$ has 
exactly $t$ global maxima.
\end{abstract}
%%%%%%%%%%%%%%%%%%%%%%%%%%%%%%%%%%%%%%%%%%%%%%%%%

\maketitle

\section{Introduction}
\subsection{Stability}
Let $r \ge 2$ and let $\cF$ be a family of $r$-uniform hypergraphs (henceforth called $r$-graphs).
An $r$-graph $\cH$ is \emph{$\cF$-free} if it contains no member of $\cF$ as a subgraph.
For every natural number~$n$ the \emph{Tur\'{a}n number} ${\rm ex}(n,\cF)$ of $\cF$ is the 
maximum number of edges in an $\cF$-free $r$-graph on $n$ vertices. 
The \emph{Tur\'{a}n density} $\pi(\cF)$ of $\cF$ is defined 
as $\pi(\cF) = \lim_{n \to \infty} {\rm ex}(n,\cF) / \binom{n}{r}$, and $\cF$ is \emph{nondegenerate}
if $\pi(\cF) > 0$. By a theorem of Erd\H{o}s~\cite{E64}, this is equivalent to $\cF$ containing an 
$r$-graph which is not $r$-partite. 

The study of ${\rm ex}(n,\cF)$ is perhaps the central topic in extremal graph and hypergraph theory.
Curiously, unlike the case for graphs, determining $\pi(\cF)$ for a family $\cF$ of hypergraphs
is known to be notoriously hard in general.
Indeed, the problem of determining $\pi(K_{\ell}^{r})$ raised by Tur\'{a}n~\cite{TU41},
where $K_{\ell}^{r}$ is the complete $r$-graph on $\ell$ vertices, is still wide open for 
all $\ell>r\ge 3$. Erd\H{o}s offered $\$ 500$ for the determination of any $\pi(K_{\ell}^{r})$ 
with $\ell > r \ge 3$ and $\$ 1000$ for the determination of all $\pi(K_{\ell}^{r})$ 
with $\ell > r \ge 3$.

The classical Erd\H os-Simonovits stability theorem~\cite{SI68}
motivated the second author~\cite{MU07} to make the following definition.
A family $\cF$ of $r$-graphs is $t$-{\em stable} if for every $m\in\NN$
there exist $r$-graphs $\cG_{1}(m),\ldots,\cG_{t}(m)$ on $m$ vertices
such that the following holds.
For every $\delta >0$ there exist $\epsilon > 0$ and $n_0$ such that for all $n \ge n_0$
if $\cH$ is an $\cF$-free $r$-graph on~$n$ vertices with
\begin{align}
|\cH| > (1-\epsilon) {\rm ex}(n,\cF), \notag
\end{align}
then $\cH$ can be transformed to some $\cG_{i}(n)$ by adding and removing at most $\delta |\cH|$ edges.
Say~$\cF$ is {\em stable} if it is $1$-stable.
Denote by $\xi(\cF)$ the minimum integer $t$ such that $\cF$ is $t$-stable,
and set $\xi(\cF) = \infty$ if there is no such $t$.
Call $\xi(\cF)$ the \emph{stability number} of $\cF$.

The Erd\H{o}s-Stone-Simonovits theorem~\cites{ES46,ES66} and Erd\H{o}s-Simonovits stability 
theorem~\cite{SI68}
imply that every nondegenerate family of graphs is stable.
However, for hypergraphs there are many families (whose Tur\'{a}n densities are unknown)
which are conjecturally not stable. Two famous  examples are
Tur\'{a}n's conjecture on tetrahedra (e.g. see~\cites{AS95,KO82,LM2}) and
the Erd\H{o}s-S\'{o}s conjecture on triple systems with bipartite links (e.g. see~\cites{FF84,LM2}).
In fact, no Tur\'{a}n density of a nondegenerate family of hypergraphs without the stability property 
was known (e.g. see~\cite{KE11}) until very recently, when the first two authors constructed 
a $2$-stable family $\cM$ of triple systems~\cite{LM2}. Our first main result states that, 
more generally, for every natural number $t$ there exists a family of triple systems 
satisfying $\xi(\cM_t)=t$.
 
We identify an $r$-graph $\cH$ with its edge set,
use $V(\cH)$ to denote its vertex set, and denote by $v(\cH)$ the size of $V(\cH)$.
An $r$-graph $\cH$ is a \emph{blow-up} of an $r$-graph $\cG$ if there exists a 
map $\psi\colon V(\cH) \to V(\cG)$
so that $\psi(E) \in \cG$ iff $E\in \cH$,
and we say $\cH$ is \emph{$\cG$-colorable}
if there exists a map $\phi\colon V(\cH) \to V(\cG)$ so that $\phi(E)\in \cG$ for all $E\in \cH$.
In other words, $\cH$ is $\cG$-colorable if and only if $\cH$ occurs as a subgraph in some blow-up 
of $\cG$.

\begin{theorem}\label{THM:t-stability}
For every positive integer $t$ there exist constants $0 < n_1< \cdots < n_t$, $0 < \lambda_t < 1/6$,
$t$ triple systems $\cG_1,\ldots, \cG_t$ with $v(\cG_i) = n_i$ for $i\in[t]$,
and a finite family $\cM_t$ of triple systems with the following properties.
\begin{enumerate}[label=\alabel]
\item\label{it:11a} The inequality ${\rm ex}\left(n,\cM_t \right) \le \lambda_t n^3$ holds 
	for all positive integers $n$, and moreover, equality holds whenever $n$ is a multiple 
	of $n_i$ for some $i\in[t]$.
\item\label{it:11b} For every $\delta > 0$ there exist $\epsilon>0$ and $N_0$ so that the 
	following holds for all~$n \ge N_0$. Every $\cM_t$-free triple system $\cH$ on $n$ vertices 
	with at least $(\lambda_t-\epsilon) n^3$ edges can be made $\cG_i$-colorable for some $i\in [t]$
	by removing at most $\delta n$ vertices. Moreover,~$\xi(\cM_t) = t$.
\end{enumerate}
\end{theorem}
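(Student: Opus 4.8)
The plan is to proceed in four stages: construct the extremal configurations together with the forbidden family, prove the exact Tur\'an bound, upgrade it to the strong stability statement, and finally pin down $\xi(\cM_t)$. For the first stage I would define $\cG_1,\dots,\cG_t$ by a recursive blow-up scheme, generalizing the construction of \cite{LM2}: fix a suitable finite ``gadget'' triple system and a tuple $n_1<\dots<n_t$, and let $\cG_i$ be obtained by iterating the gadget construction with a prescribed prefix of a parameter sequence (so $\cG_i$ is a ``truncation at depth $i$''). The parameters have to be tuned so that the balanced blow-up of each $\cG_i$ on $n$ vertices (for $n_i\mid n$) has exactly $\lambda_t n^3$ edges for one common $\lambda_t<1/6$; this amounts to a Diophantine compatibility among the $n_i$ and the gadget. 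Crucially, the $\cG_i$ must be arranged to have pairwise distinct limiting shadow densities $|\partial\cG_i(n)|/\binom n2\to x_i$, since that is precisely what forces them to lie far apart in edit distance. The family $\cM_t$ is then a carefully chosen finite set of ``obstruction'' triple systems --- each of them non-$\cG_i$-colorable for every $i$ --- small enough to stay finite yet rich enough that $\cM_t$-freeness propagates structure down each level of the recursion; one checks directly that every blow-up of every $\cG_i$ is $\cM_t$-free, which already yields ${\rm ex}(n,\cM_t)\ge\lambda_t n^3$ whenever $n_i\mid n$.

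For the matching upper bound ${\rm ex}(n,\cM_t)\le\lambda_t n^3$ I would induct on $n$. If $\cH$ is $\cM_t$-free with more than $\lambda_t n^3$ edges, some vertex has link with more than $3\lambda_t n^2$ edges; the point of the design of $\cM_t$ is that such a dense $\cM_t$-free link, combined with global $\cM_t$-freeness, forces a partition $V(\cH)=V_1\dcup\dots\dcup V_a$ modelled on the top level of one of the $\cG_i$ --- cross-triples follow the gadget pattern and the remaining triples are absent --- after which one recurses inside each $V_j$. A Zykov-style symmetrization (shifting vertices between the $V_j$ to balance the partition without decreasing $|\cH|$) keeps the bookkeeping clean, and summing the recursion while invoking the compatibility relations among the $n_i$ gives the bound; the case $n_i\mid n$ is exactly where every inequality in the recursion is tight.

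The stability statement then falls out of quantifying the above. If $\cH$ is $\cM_t$-free with at least $(\lambda_t-\epsilon)n^3$ edges, every inequality used in the upper-bound induction is tight up to an error controlled by $\epsilon$, so at each of the boundedly many recursion levels the partition is $\delta$-balanced and all but a $\delta$-fraction of the pattern triples are present; recording which gadget pattern occurs at the top level identifies an index $i\in[t]$, and collapsing each part to its recursive colour exhibits $\cH$, after deleting the $o(n)$ vertices that violate the pattern, as $\cG_i$-colorable. This is the first assertion of \ref{it:11b}. For $\xi(\cM_t)=t$: taking $\cG_i(m)$ to be the balanced blow-up of $\cG_i$ on $m$ vertices, the stability statement shows a near-extremal $\cM_t$-free $\cH$ becomes $\cG_i$-colorable after deleting $\delta n$ vertices (changing $O(\delta n^3)=O(\delta|\cH|)$ edges), and a $\cG_i$-colorable graph with at least $(\lambda_t-\epsilon)n^3$ edges has nearly balanced colour classes and nearly all pattern triples, hence is within $O(\delta|\cH|)$ edits of $\cG_i(n)$; thus $\cM_t$ is $t$-stable. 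Conversely, the $t$ graphs $\cG_1(n),\dots,\cG_t(n)$ are $\cM_t$-free and within $O(1)$ of $\lambda_t n^3$ edges, but since their shadow densities tend to the distinct values $x_1,\dots,x_t$ they are pairwise at edit distance $\Omega(n^3)$; if $\cM_t$ were $(t-1)$-stable, pigeonhole would force two of them within $\delta n^3$ edits of a common $\cG'_j(n)$, hence within $2\delta n^3$ of each other, which is impossible for small $\delta$. Therefore $\xi(\cM_t)=t$, and as a byproduct the boundary of the feasible region of $\cM_t$ attains its maximum exactly at $x_1,\dots,x_t$.

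The \textbf{main obstacle} is the design and analysis underpinning the second and third stages: one must choose the finite family $\cM_t$ so that $\cM_t$-freeness is \emph{exactly} as strong as is needed to force the iterated blow-up structure one level at a time --- strong enough that both the exact-number induction and its $\epsilon$-perturbed stability version close, yet weak enough that $\cM_t$ remains finite and no $\cG_i$ acquires a forbidden subgraph. Getting the gadget, the parameter sequence, and $\cM_t$ to fit together (while also securing the distinctness of the shadow densities $x_i$) is the technical heart; the symmetrization, the pigeonhole argument for $\xi(\cM_t)\ge t$, and the edit-distance estimates for $\xi(\cM_t)\le t$ are, in spirit, bookkeeping on top of it.
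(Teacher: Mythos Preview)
Your four-stage architecture matches the paper's, and your argument that $\xi(\cM_t)\ge t$ via distinct shadow densities plus pigeonhole is essentially the paper's Proposition~\ref{PROP:number-of-maxima-and-stability-number}. But the construction and the two central proofs take a genuinely different route from what you propose.

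The paper does \emph{not} use a recursive or iterated blow-up scheme. Each $\cG_i$ is a single flat, almost-complete $3$-graph: $\cG_i = K_{n_i}^3\setminus(H(\cD_i)\cup\cS_i)$, where $\cD_i$ is an $(n_i,k_i)$-design and $\cS_i$ a sparse regular $3$-graph. The key technical input is Proposition~\ref{prop:lagrange}, a Lagrangian computation showing $\lambda(\cG_i)=|\cG_i|/n_i^3$ exactly, uniquely attained at the barycentre; equal Lagrangians across $i$ are then arranged by a number-theoretic lemma (Lemma~\ref{LEMMA:existence-extremal-configurations}) choosing $k_i,n_i$ with $(k_i+1)/n_i$ constant. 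The family $\cM_t$ is defined in one shot (Definition~\ref{d:1151}) as those members of $\bigcup_{\ell\le n_t}\widehat{\cK}^3_\ell$ with a core of transversal number $\ge 2$ that embed into no blow-up of any $\cG_i$. The upper bound is proved by pure Zykov symmetrization with no induction on $n$ (Theorem~\ref{THEOREM:turan-density-Mt}), relying on the hom-closure of $\cM_t$-freeness (Proposition~\ref{PROP:Mt-homo-free}); stability is obtained by a device the authors call the $\Psi$-trick, which reduces everything to a one-vertex extension lemma (Lemma~\ref{LEMMA:extend-coloring}): if $\cH-v$ is $\cG_i$-colourable and $\cH$ has near-extremal minimum degree, then so is $\cH$.

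Your recursive-truncation plan has a gap you have not addressed: in iterated blow-up constructions, truncating at different depths yields configurations with \emph{different} limiting edge densities (a sequence converging to, not equalling, a common limit), so it is not clear how ``truncation at depth $i$'' can produce $t$ configurations all realising the \emph{same} $\lambda_t$ while simultaneously having $t$ distinct shadow densities. The paper sidesteps this by decoupling the $\cG_i$ entirely --- they are unrelated except through the arithmetic constraint on $(n_i,k_i)$. Correspondingly, your inductive upper-bound scheme (dense link forces a top-level partition, then recurse into parts) presupposes a hierarchical structure in the $\cG_i$ that the paper's flat design-based constructions do not have; in the paper's setting there is no ``inside each $V_j$'' to recurse into, which is why symmetrization rather than induction carries the argument.
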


\subsection{Feasible regions}\label{subsec:feasible}
Recall that the \emph{shadow} of an $r$-graph $\cH$ is defined to be the $(r-1)$-graph
\begin{align}
\partial\cH 
= 
\left\{A\in \binom{V(\cH)}{r-1}\colon \text{there is } B\in \cH \text{ such that } 
	A\subseteq B\right\}. \notag
\end{align}
Call $d(\cH)= |\cH|/\binom{v(\cH)}{r}$ the \emph{edge density}
and $d(\partial\cH)= |\partial\cH|/\binom{v(\cH)}{r-1}$
the \emph{shadow density} of~$\cH$.

Given a family $\cF$ the \emph{feasible region} $\Omega(\cF)$ of $\cF$
is the set of points $(x,y)\in [0,1]^2$ such that there exists a sequence of $\cF$-free $r$-graphs
$\left( \cH_{k}\right)_{k=1}^{\infty}$ with $\lim_{k \to \infty}v(\cH_{k}) = \infty$,
$\lim_{k \to \infty}d(\partial\cH_{k}) = x$, and $\lim_{k \to \infty}d(\cH_{k}) = y$.
The feasible region unifies and generalizes many classical problems
such as the  Kruskal-Katona theorem~\cites{KR63,KA66} and the Tur\'{a}n problem. It
was introduced recently in \cite{LM1} to understand the extremal properties of $\cF$-free hypergraphs
beyond just the determination of $\pi(\cF)$. The general shape of $\Omega(\cF)$ was analyzed 
in~\cite{LM1} as follows: For some constant $c(\cF)\in [0, 1]$ the projection to the first 
coordinate, 
\[
	{\rm proj}\Omega(\cF) 
	= 
	\left\{ x \colon  \text{there is $y \in [0,1]$ such that $(x,y) \in \Omega(\cF)$} \right\},
\]
is the interval $[0, c(\cF)]$ . 
Moreover, there is a left-continuous almost everywhere differentiable function
$g(\cF)\colon {\rm proj}\Omega(\cF) \to [0,1]$ such that 
\[
	\Omega(\cF)
	=
	\bigl\{(x, y)\in [0, c(\cF)]\times [0, 1]\colon 0\le y\le g(\cF)(x)\bigr\}\,.
\]
Let us call $g(\cF)$ the {\it feasible region function} of $\cF$. 
There are examples showing that~$g(\cF)$ is not necessarily 
continuous (see~\cite{LM1}*{Theorem 1.12})
and the present work is part of an effort to figure out how ``exotic'' these functions can be.  

The stability number of $\cF$ can give information about the shape of $\Omega(\cF)$,
more precisely, about the number of global maxima of $g(\cF)$ (e.g. see 
Proposition~\ref{PROP:number-of-maxima-and-stability-number}).
The family $\cM$ of triple systems from~\cite{LM2} for which $\xi(\cM)=2$ has the following 
additional property: not only are the two near extremal constructions for $\cM$ far from each 
other in edit-distance, but the same is true of their shadows.
As a consequence, in addition to $\xi(\cM) = 2$, the function~$g(\cM)$ has exactly two global maxima.
The authors raised the question of whether there exists a finite family $\cM_t$ of triple systems 
so that the function $g(\cM_t)$ has exactly $t$ global maxima for $t \ge 3$ 
(see~\cite{LM1}*{Problem~6.10}). Our second main result 
asserts that the objects constructed in the course of proving Theorem~\ref{THM:t-stability}
give a positive solution to this problem. 

\begin{theorem}\label{THM:feasible-region-Mt}
For every positive integer $t$ there exist constants $0 < n_1< \cdots < n_t$, $0 < \lambda_t < 1/6$,
and a finite family $\cM_t$ of triple systems such that ${\rm proj}\Omega(\cM_t) = [0,1]$,
and $g(\cM_t,x) \le 6\lambda_t$ for all $x \in [0,1]$.
Moreover, $g(\cM_t,x) = 6\lambda_t$ if and only if $x = 1-1/n_i$ for some $i\in[t]$.
\end{theorem}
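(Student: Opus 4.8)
The plan is to obtain Theorem~\ref{THM:feasible-region-Mt} as a corollary of Theorem~\ref{THM:t-stability} together with a few structural features of the triple systems $\cG_i$ that are built into that construction: namely, (i) every blow-up of $\cG_i$ is $\cM_t$-free and the balanced blow-up $\cG_i[m]$ is an extremal $\cM_t$-free configuration (this is exactly what makes part~\ref{it:11a} an equality), (ii) $\cG_i$ has complete shadow, (iii) the Lagrangian of $\cG_i$ is maximized only at the uniform weighting (as is used when describing extremal $\cG_i$-colorable configurations), and (iv) every member of $\cM_t$ is non-linear, i.e.\ contains two edges sharing at least two vertices. Granting these, there are three things to prove: the bound $g(\cM_t,\cdot)\le 6\lambda_t$; the identity ${\rm proj}\,\Omega(\cM_t)=[0,1]$; and that $g(\cM_t,\cdot)$ attains $6\lambda_t$ precisely at the points $1-1/n_i$.

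The first is immediate from part~\ref{it:11a}: any $\cM_t$-free triple system $\cH$ has $d(\cH)=|\cH|/\binom{v(\cH)}{3}\le \lambda_t\,v(\cH)^3/\binom{v(\cH)}{3}$, which tends to $6\lambda_t$; hence $\Omega(\cM_t)\subseteq[0,1]\times[0,6\lambda_t]$. That the value $6\lambda_t$ is attained at each $1-1/n_i$ comes from the balanced blow-up: $\cG_i[m]$ is $\cM_t$-free and extremal, so $d(\cG_i[m])\to 6\lambda_t$, while its shadow is, by (ii), exactly the set of pairs meeting two distinct parts, so $d(\partial\cG_i[m])=\binom{n_i}{2}m^2/\binom{n_i m}{2}\to 1-1/n_i$. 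Thus $(1-1/n_i,6\lambda_t)\in\Omega(\cM_t)$, and with the upper bound this forces $g(\cM_t,1-1/n_i)=6\lambda_t$.

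For ${\rm proj}\,\Omega(\cM_t)=[0,1]$ only the inclusion ``$\supseteq$'' requires an argument. Fix $x\in[0,1]$ and, for each large $n$, let $\cS_n$ be a partial Steiner system (a linear triple system) on $n$ vertices with $(1+o(1))\,xn^2/6$ edges; such systems exist by classical packing results, and by (iv) every linear triple system — being free of any configuration that has two edges on a common pair — is $\cM_t$-free. Since $\cS_n$ is linear, $|\partial\cS_n|=3|\cS_n|$, so $d(\partial\cS_n)\to x$ while $d(\cS_n)\to 0$; hence $(x,0)\in\Omega(\cM_t)$ and $x\in{\rm proj}\,\Omega(\cM_t)$. (If feature (iv) should fail, this step can instead be run with unbalanced blow-ups of the $\cG_i$ padded by isolated vertices for $x\le 1-1/n_t$, together with a suitable sparse $\cM_t$-free construction of nearly complete shadow for larger $x$.)

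It remains to exclude any other global maximum, and this is where I expect the real work to lie. Suppose $g(\cM_t,x_0)=6\lambda_t$; since $\Omega(\cM_t)=\{(x,y):0\le y\le g(\cM_t,x)\}$, there is a sequence of $\cM_t$-free triple systems $\cH_k$ with $v(\cH_k)\to\infty$, $d(\cH_k)\to 6\lambda_t$ and $d(\partial\cH_k)\to x_0$. Fix a small $\delta>0$: as $|\cH_k|\ge(\lambda_t-\epsilon)v(\cH_k)^3$ eventually, part~\ref{it:11b} gives a set $R_k$ of at most $\delta v(\cH_k)$ vertices with $\cH_k-R_k$ being $\cG_i$-colorable for some $i\in[t]$, and we pass to a subsequence with $i$ fixed. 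Deleting $R_k$ costs only $O(\delta v(\cH_k)^3)$ edges, so $\cH_k-R_k$ is a \emph{near-extremal} $\cG_i$-colorable triple system; by (iii) its colour classes are then nearly balanced and it uses almost all pairs between distinct classes, so by (ii) we get $d(\partial(\cH_k-R_k))=1-1/n_i+\eta(\delta)$ with $\eta(\delta)\to 0$ as $\delta\to 0$. Since $\partial(\cH_k-R_k)\subseteq\partial\cH_k$ and $|R_k|\le\delta v(\cH_k)$, this already yields $d(\partial\cH_k)\ge 1-1/n_i-\eta'(\delta)$; the difficulty is the matching upper bound $d(\partial\cH_k)\le 1-1/n_i+\eta''(\delta)$. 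Pairs meeting $R_k$ number only $O(\delta v(\cH_k)^2)$ and are harmless, but a priori the pairs lying \emph{inside} a colour class of $\cH_k-R_k$ could inflate $\partial\cH_k$ dramatically — indeed, if one could place even a sparse (say, linear) triple system inside each colour class of a balanced blow-up of $\cG_i$ without creating a member of $\cM_t$, one would realise shadow density $1$ at edge density $6\lambda_t$, contradicting the theorem. So the crux is to show that $\cM_t$-freeness of $\cH_k$ forces the triple system induced on each colour class (and on its neighbourhood) to be so sparse that fewer than $\eta''(\delta)\cdot v(\cH_k)^2$ within-class pairs lie in $\partial\cH_k$; this is a genuinely shadow-sensitive strengthening of the mechanism behind Theorem~\ref{THM:t-stability}\ref{it:11b}, and it is the step that uses the detailed combinatorics of the particular family $\cM_t$. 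Granting it and letting $\delta\to 0$, we get $x_0=1-1/n_i$ for some $i\in[t]$, which completes the characterisation of the global maxima; in particular $g(\cM_t,\cdot)$ has exactly $t$ of them, in agreement with $\xi(\cM_t)=t$ via Proposition~\ref{PROP:number-of-maxima-and-stability-number}.
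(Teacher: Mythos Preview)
Your overall architecture matches the paper's: the bound $g(\cM_t,\cdot)\le 6\lambda_t$ follows from part~\ref{it:11a}, the values at $1-1/n_i$ come from balanced blow-ups of~$\cG_i$, and the ``only if'' direction is stability plus shadow control. However, two steps deviate from the paper in ways that matter.

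\medskip

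\textbf{The projection argument.} Your route through linear triple systems and claim~(iv) is unnecessary and unverified. The paper instead observes that \emph{semibipartite} $3$-graphs are $\cM_t$-free: if $F\subseteq\cH$ and $\cH$ is semibipartite with parts $A,B$, then any $2$-covered set $S\subseteq V(F)$ satisfies $|A\cap S|\le 1$ (two vertices of $A$ share no edge), whence $\tau(F[S])\le 1$ and $F\notin\cM_t$ by Definition~\ref{d:1151}. Since complete semibipartite $3$-graphs realise every shadow density in $[0,1]$, this gives ${\rm proj}\,\Omega(\cM_t)=[0,1]$ with no further work. Your claim~(iv) may well be true, but it is not obvious and you do not prove it; the semibipartite argument sidesteps the issue entirely.

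\medskip

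\textbf{The shadow upper bound.} You correctly isolate the difficulty --- bounding within-class pairs in $\partial\cH_k$ that arise from edges $\{u,v,w\}$ with $w$ among the deleted vertices --- but then ``grant'' it as a mysterious shadow-sensitive strengthening. In fact the paper's existing machinery already handles it, and no new idea is required. The stability theorem is proved in the stronger form of Theorem~\ref{THM:H-Z-is-Gi-colorable}: the removed set $Z$ consists precisely of the \emph{low-degree} vertices. Consequently every $u\in V\setminus Z$ has nearly full link in the blow-up, and the argument of Claim~\ref{CLAIM:Vi-indep-in-Lv} applies verbatim: given $u_0,u_1$ in the same colour class $V_j$, one uses Lemma~\ref{LEMMA:greedily-embedding-Gi} with $S=\{u_0,u_1\}$ to find a transversal inducing $\cG_i$ through each of them, and then Corollary~\ref{c:1344} forces $u_0u_1\notin\partial\cH_n$. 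So there are \emph{no} within-class pairs in $\partial\cH_n$ among $V\setminus Z$, and the remaining pairs (those touching~$Z$) number at most $|Z|\cdot n\le\delta n^2$. This is exactly the inequality $|\partial\cH_n|\le\bigl(\tfrac{n_i-1}{2n_i}+\delta\bigr)n^2$ the paper asserts. The paper's ``Therefore'' is admittedly terse, but the justification is already in Section~\ref{SEC:stability}; your proposal treats it as an open problem.
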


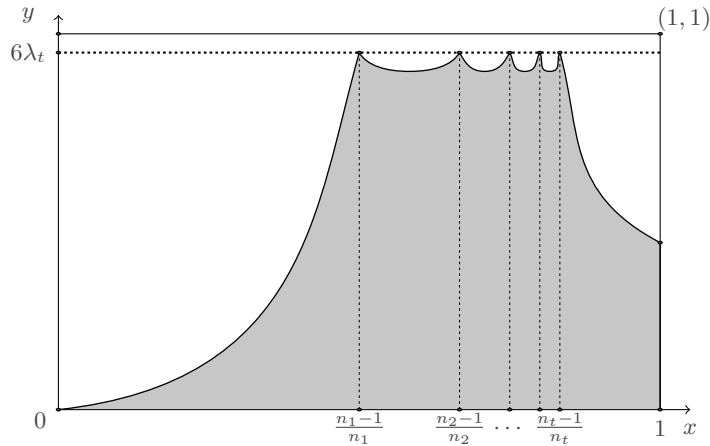
\begin{figure}[htbp]
\centering
\begin{tikzpicture}[xscale=8,yscale=5]
\draw [->] (0,0)--(1+0.05,0);
\draw [->] (0,0)--(0,1+0.05);
\draw (0,1)--(1,1);
\draw (1,0)--(1,1);
\draw [line width=0.8pt,dash pattern=on 1pt off 1.2pt,domain=0:1] plot(\x,1-0.05);
\draw [line width=0.4pt,dash pattern=on 1pt off 1.2pt] (1/2,0) -- (1/2,1-0.05);
\draw [line width=0.4pt,dash pattern=on 1pt off 1.2pt] (2/3,0) -- (2/3,1-0.05);
\draw [line width=0.4pt,dash pattern=on 1pt off 1.2pt] (3/4,0) -- (3/4,1-0.05);
\draw [line width=0.4pt,dash pattern=on 1pt off 1.2pt] (4/5,0) -- (4/5,1-0.05);
\draw [line width=0.4pt,dash pattern=on 1pt off 1.2pt] (5/6,0) -- (5/6,1-0.05);

\draw[fill=sqsqsq,fill opacity=0.25, line width=0.5pt]
(0,0)  to [out = 10, in = 260] (1/2,1-0.05) to [out = 290, in = 180] (7/12,1-0.1) to [out = 0, in = 250] (2/3, 1-0.05)
to [out = 290, in = 180] (17/24,1-0.1) to [out = 0, in = 250] (3/4, 1-0.05)
to [out = 290, in = 180] (31/40,1-0.1) to [out = 0, in = 250] (4/5, 1-0.05)
to [out = 290, in = 180] (49/60,1-0.1) to [out = 0, in = 250] (5/6, 1-0.05)
to [out = 280, in = 140] (1,4/9) to (1,0);

\begin{scriptsize}
\draw [fill=uuuuuu] (1,0) circle (0.1pt);
\draw[color=uuuuuu] (1,0-0.05) node {$1$};
\draw [fill=uuuuuu] (0,0) circle (0.1pt);
\draw[color=uuuuuu] (0-0.03,0-0.03) node {$0$};
\draw [fill=uuuuuu] (0,1) circle (0.1pt);
\draw[color=uuuuuu] (0-0.05,1+0.05) node {$y$};
\draw[color=uuuuuu] (1+0.05,0-0.05) node {$x$};
\draw [fill=uuuuuu] (0,1-0.05) circle (0.1pt);
\draw[color=uuuuuu] (0-0.05,1-0.05) node {$6\lambda_t$};

\draw [fill=uuuuuu] (1/2,1-0.05) circle (0.1pt);
\draw [fill=uuuuuu] (1/2,0) circle (0.1pt);
\draw[color=uuuuuu] (1/2,0-0.05) node {$\frac{n_1-1}{n_1}$};

\draw [fill=uuuuuu] (2/3,1-0.05) circle (0.1pt);
\draw [fill=uuuuuu] (2/3,0) circle (0.1pt);
\draw[color=uuuuuu] (2/3,0-0.05) node {$\frac{n_2-1}{n_2}$};

\draw [fill=uuuuuu] (3/4,1-0.05) circle (0.1pt);
\draw [fill=uuuuuu] (3/4,0) circle (0.1pt);
\draw[color=uuuuuu] (3/4,0-0.05) node {$\cdots$};
\draw [fill=uuuuuu] (4/5,1-0.05) circle (0.1pt);
\draw [fill=uuuuuu] (4/5,0) circle (0.1pt);
%\draw[color=uuuuuu] (0.5,0-0.05) node {$x_4$};
%
\draw [fill=uuuuuu] (5/6,1-0.05) circle (0.1pt);
\draw [fill=uuuuuu] (5/6,0) circle (0.1pt);
\draw[color=uuuuuu] (5/6,0-0.05) node {$\frac{n_t-1}{n_t}$};

\draw [fill=uuuuuu] (1,1) circle (0.1pt);
\draw[color=uuuuuu] (1+0.04,1+0.04) node {$(1,1)$};

\draw [fill=uuuuuu] (1,4/9) circle (0.1pt);
%%%%%%%%%%%%%%%%%%%%%%%%%%%%%%%%%%%%%%%%%%%
\end{scriptsize}
\end{tikzpicture}
\caption{The function $g(\cM_t)$ has exactly $t$ global maxima.}
\end{figure}

Roughly speaking, the connection between these results is as follows. 
An $r$-graph is a {\em star} if there is a vertex $v$ such that all edges contain $v$,
and an $r$-graph~$\cH$ is {\em semibipartite} if it is $\cS$-colorable for some star $\cS$. 
Note that this is the same as saying that $V(\cH)$ has a partition into two parts $A$ and $B$ such 
that all edges have exactly one vertex in $A$ and $r-1$ vertices in $B$.
We will see later that our definition of $\cM_t$ ensures that every semibipartite $3$-graph 
is $\cM_t$-free.
By shrinking $A$, the shadow density of an $n$-vertex semibipartite 3-graph $\cH$ can be made 
arbitrarily close to $1$ as $n \rightarrow \infty$, so ${\rm proj}\Omega(\cM_t) = [0,1]$.
The shadows of the triple systems $\cG_1, \ldots, \cG_t$ from Theorem~\ref{THM:t-stability} 
are complete graphs and thus their edge densities are the distinct numbers $1-1/n_1, \dots, 1-1/n_t$. 
So $g(\cM_t, x)=6\lambda_t$ holds if $x$ is one of those densities and stability allows us to 
exclude further solutions to this equation. 

\subsection*{Organization} 
In Section~\ref{SEC:lagrangian} we present some definitions related to the Lagrangian of hypergraphs
and prove a result about the Lagrangian of a class of almost complete $3$-graphs.
In Section~\ref{SEC:the-extremal-configurations} we use the result from Section~\ref{SEC:lagrangian}
to define the extremal configurations,
which are balanced blow-ups of $\cG_1,\ldots, \cG_t$,
define the forbidden family $\cM_t$, and
prove the first part of Theorem~\ref{THM:t-stability}.
We prove the second part of Theorem~\ref{THM:t-stability} in Section~\ref{SEC:stability}, 
and Theorem~\ref{THM:feasible-region-Mt} in Section~\ref{subsection:feasible}. 
Section~\ref{SEC:remarks} contains some concluding remarks on generalisations to $r$-graphs
and open problems.

%%%%%%%%%%%%%%%%%%%%%%%%%%%%%%%%%%%%%%%%%%%%%%%%%
\section{Lagrangian}\label{SEC:lagrangian}

In this section we present some definitions related to the Lagrangian of a hypergraph,
introduced by Frankl and R\"odl in~\cite{FR84},
and prove a result (Proposition~\ref{prop:lagrange} below) about certain almost complete triple 
systems.

Let $\cG$ be an $r$-graph for some $r\ge 2$. The \emph{neighborhood} 
of a vertex $v\in V(\cG)$ is defined to be
\begin{align}
N_{\cG}(v) 
= 
\left\{u\in V(\cG)\setminus\{v\}\colon
\text{ there is $A\in \cG$ such that } \{u,v\}\subseteq A\right\}, \notag
\end{align}
the link of $v$ is
\[
	L_{\cG}(v) = \left\{A\in \partial\cG\colon A\cup\{v\}\in \cG\right\},
\]
and $d_{\cG}(v)= |L_{\cG}(v)|$ is called the \emph{degree} of $v$.
Denote by $\delta(\cG), \Delta(\cG)$ the minimum and maximum degree of $\cG$, respectively.
For a pair of vertices $u,v\in V(\cG)$
the neighborhood of $\{u,v\}$ is
\begin{align}
N_{\cG}(u,v) 
= 
\left\{w\in V(\cG)\setminus\{u,v\}\colon
                        \exists A\in \cG \text{ such that } \{u,v,w\}\subseteq A\right\}, \notag
\end{align}
and $d_{\cG}(u,v) = |N_{\cH}(u,v)|$ is called the \emph{codegree} of $\{u,v\}$.
Denote by $\delta_2(\cG), \Delta_2(\cG)$ the minimum and maximum codegree of $\cG$, respectively.

For an $r$-graph $\cG$ on $n$ vertices (let us assume for notational transparency 
that $V(\cG) = [n]$) the multilinear function
$L_{\cG}\colon \RR^n \to \RR$ is defined by
\begin{align}
L_{\cG}(x_1,\ldots,x_n) = \sum_{E\in \cH}\prod_{i\in E}x_i
\quad \text{ for all } (x_1,\ldots,x_n) \in \RR^n. \notag
\end{align}
Denote by $\Delta_{n-1}$ the standard $(n-1)$-dimensional simplex,
i.e.
\begin{align}
\Delta_{n-1} = \left\{(x_1,\ldots,x_n)\in [0,1]^n\colon x_1+\cdots+x_n= 1\right\}. \notag
\end{align}
Since $\Delta_{n-1}$ is compact,
a theorem of Weierstra\ss\ implies that the restriction of $L_{\cG}$ to $\Delta_{n-1}$
attains a maximum value, called the \emph{Lagrangian} of $\cG$ and denoted by $\lambda(\cG)$.

For a hypergraph $\cG$ the maximum number of edges in a blow-up of $\cG$
is related to $\lambda(\cG)$ (e.g. see Frankl and F\"uredi~\cite{FF89} 
or Keevash's survey~\cite{KE11}*{Section~3}).

\begin{lemma}[\cites{FF89, KE11}]\label{LEMMA:blowup-lagrangian}
Let $r \ge 2$ and let $\cG$, $\cH$ be two $r$-graphs.
If $\cH$ is a blow up of $\cG$,
then $|\cH| \le \lambda(\cG) v(\cH)^{r}$.
\end{lemma}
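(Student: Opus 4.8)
The plan is a direct computation: unpack the definition of a blow-up, count the edges of $\cH$ in terms of the preimage sizes, recognise the count as the multilinear form $L_{\cG}$ evaluated at those sizes, and then homogenise and feed the resulting probability vector into the definition of $\lambda(\cG)$.

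First I would fix a labelling $V(\cG) = [n]$ together with a map $\psi\colon V(\cH) \to V(\cG)$ witnessing that $\cH$ is a blow-up of $\cG$, and set $V_i = \psi^{-1}(i)$ and $m_i = |V_i|$ for $i \in [n]$, so that $v(\cH) = m_1 + \cdots + m_n =: m$. Since $\cG$ is $r$-uniform, the condition $\psi(E) \in \cG$ forces $|\psi(E)| = r$, so every $E \in \cH$ is a transversal of some $r$-set $\{i_1,\dots,i_r\} \in \cG$, meeting each $V_{i_j}$ in exactly one vertex; conversely, by the ``iff'' in the definition of a blow-up, every such transversal is an edge of $\cH$. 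Hence the edges of $\cH$ partition according to their $\psi$-image, and counting transversals in each class gives
\[
	|\cH| = \sum_{\{i_1,\dots,i_r\} \in \cG} m_{i_1} \cdots m_{i_r} = L_{\cG}(m_1,\dots,m_n).
\]
Next I would use that $L_{\cG}\colon \RR^n \to \RR$ is homogeneous of degree $r$: writing $x_i = m_i/m$ we get $L_{\cG}(m_1,\dots,m_n) = m^r L_{\cG}(x_1,\dots,x_n)$. The point $(x_1,\dots,x_n)$ lies in $[0,1]^n$ and has coordinate sum $1$, so it belongs to $\Delta_{n-1}$, whence $L_{\cG}(x_1,\dots,x_n) \le \lambda(\cG)$ by the definition of the Lagrangian as the maximum of $L_{\cG}$ on $\Delta_{n-1}$. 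Combining the two displays yields $|\cH| = m^r L_{\cG}(x_1,\dots,x_n) \le \lambda(\cG)\, m^r = \lambda(\cG)\, v(\cH)^r$, as desired. (The same argument with ``$\le$'' replacing ``$=$'' in the edge count handles the slightly more general statement for $\cG$-colorable $\cH$.)

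There is essentially no serious obstacle here; the points requiring a little care are the degenerate cases — if $m = 0$ then $\cH$ is empty and the bound is trivial, while parts with $m_i = 0$ simply contribute zero and may be discarded along with the corresponding vertices of $\cG$ — and the edge-counting identity itself, where one must check that no edge of $\cH$ is counted twice. The latter holds because the images $\psi(E)$ genuinely partition $\cH$ and, within a fixed image $\{i_1,\dots,i_r\}$, an edge is determined by its unique representative in each of the $r$ relevant parts $V_{i_j}$.
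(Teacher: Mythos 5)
Your proof is correct and is the standard argument: decompose $|\cH|$ by $\psi$-image to identify $|\cH|$ with $L_{\cG}(m_1,\dots,m_n)$, use the degree-$r$ homogeneity of $L_{\cG}$ to pull out a factor of $m^r$, and bound the normalised vector on the simplex by $\lambda(\cG)$. The paper states this lemma as a citation to the literature rather than reproving it, so there is no in-paper proof to diverge from; your parenthetical remark that the same argument (with an inequality in the edge count) handles $\cG$-colorable $\cH$ is also correct and is the form in which the bound is typically invoked.
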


Given a $3$-graph $\cG$,
by plugging $(1/n, \ldots, 1/n)$ into $L_{\cG}$ one immediately obtains the lower bound
$\lambda(L_{\cG})\ge |\cG|/n^3$.
It is well known that for cliques~$\cH=K_n^{3}$ this holds with
equality and, moreover, that $(1/n, \ldots, 1/n)$ is the only point in the simplex~$\Delta_{n-1}$,
where $L_{\cH}$ attains this maximum value.

The main result of this section, Proposition~\ref{prop:lagrange} below, exhibits a class
of almost complete $3$-graphs having the same properties.
This will allow us later to construct for every given positive integer $t$ a 
family $\{\cG_1, \ldots, \cG_t\}$ of $3$-graphs
and a rational number~$\lambda_t$ close to $1/6$ such 
that $\lambda(\cG_i)=|\cG_i|/v(\cG_i)^3=\lambda_t$ holds for all $i\in [t]$.
The extremal configurations for our hypergraph Tur\'an problem are
then going to be balanced blow-ups of $\cG_1, \ldots, \cG_t$.
As we can accomplish $v(\cG_1)<\dots<v(\cG_t)$, this is relevant to 
Theorem~\ref{THM:feasible-region-Mt}.

Let us observe that every hypergraph $\cG$ satisfying $\lambda(\cG) = |\cG|/v(\cG)^3$ needs to be regular
in the sense that all vertices have the same degree.
In the converse direction, regular hypergraphs can still have much larger Lagrangians than $|\cG|/v(\cG)^3$.
For instance, the Lagrangian of the Fano plane is $1/27$ but not $1/49$.
To avoid such situations we utilize a design theoretic construction.

For the purposes of this article, by an \emph{$(n, k)$-design} we shall mean a $k$-graph
$\cD$ on $n$ vertices such that every pair of vertices is covered
by a unique edge.
With every such design $\cD$ we associate the $3$-graph 
\begin{align}
H(\cD) = \bigcup_{E\in \cD}\binom{E}{3}. \notag
\end{align}
on $V(\cD)$. Note that
\begin{align}
|H(\cD)| = \binom{k}{3}\frac{\binom{n}{2}}{\binom{k}{2}} = \frac{k-2}{6}n(n-1). \notag
\end{align}

It will turn out that for $n\ge 18k$ every $3$-graph of the form $\cG = K_{n}^{3}\setminus H(\cD)$,
where $\cD$ is an $(n,k)$-design on $[n]$, has the property $\lambda(\cG) = |\cG|/v(\cG)^3$.
In order to increase our control over the resulting value of $\lambda(\cG)$
Proposition~\ref{prop:lagrange}
allows the extra flexibility to subtract a very sparse regular $3$-graph from $\cG$.
Moreover, for reasons related to stability we state slightly more than just the actual value of the Lagrangian.

\begin{proposition}\label{prop:lagrange}
Suppose that $n\ge 18k+3^7s^3$, $\cD$ is an $(n, k)$-design on $[n]$,
and $\cS$ is an $s$-regular $3$-graph on $[n]$.
If $\cS \cap H(\cD) = \emptyset$ and $\cG = K_{n}^{3}\setminus\left(H(\cD) \cup \cS\right)$,
then
\begin{align}\label{eq:1533}
L_{\cG}(x_1, \ldots, x_n)+\frac19\sum_{i=1}^n\left(x_i-\frac1n\right)^2
\le \frac{|\cG|}{n^3}
%= \frac{(n-1)(n-k)-2s}{6n^2}
= \frac{1}{6}\left(1-\frac{k+1}{n}+\frac{k-2s}{n^2}\right)
\end{align}
holds for all $(x_1, \dots, x_n)\in\Delta_{n-1}$ and, consequently, 
\begin{equation}\label{eq:1534}
\lambda(\cG)=\frac{1}{6}\left(1-\frac{k+1}{n}+\frac{k-2s}{n^2}\right).
\end{equation}
\end{proposition}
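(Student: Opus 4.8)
The plan is to establish the pointwise inequality~\eqref{eq:1533}, since the identification of $|\cG|/n^3$ with the claimed rational expression is a direct count (using $|H(\cD)| = \tfrac{k-2}{6}n(n-1)$, $|\cS| = sn/3$, and $|K_n^3| = \binom n3$), and~\eqref{eq:1534} then follows because the quadratic correction term vanishes at the barycenter $(1/n,\dots,1/n)$ and is nonnegative everywhere. So the whole content is the quadratic estimate. I would work on $\Delta_{n-1}$ and write $x_i = \tfrac1n + y_i$ with $\sum_i y_i = 0$; the goal becomes an upper bound for $L_{\cG}(x) + \tfrac19\sum y_i^2$ in terms of the value at the barycenter. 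It is cleaner, though, to bound $L_{\cG}$ directly against $L_{K_n^3}$ and the ``defect'' hypergraph $H(\cD)\cup\cS$: writing $L_{\cG} = L_{K_n^3} - L_{H(\cD)} - L_{\cS}$, I would first recall the classical fact that on the simplex $L_{K_n^3}(x) = \tfrac16\bigl(1 - 3\sum x_i^2 + 2\sum x_i^3\bigr)$, which already produces the needed $-\tfrac12 n\sum x_i^2 \cdot(\text{something})$; more precisely $L_{K_n^3}(x) \le \tfrac16 - \tfrac12\sum_{i}x_i^2\bigl(1 - \tfrac{2}{\text{?}}\bigr)$ — I would instead keep the exact expansion and only later discard the cubic terms carefully.

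The key steps, in order. \textbf{Step 1: reduce to a quadratic comparison.} Using $\sum x_i = 1$ one has the identity $1 - 3\sum x_i^2 + 2\sum x_i^3 = \tfrac1{n^2}\cdot(\text{value at barycenter contribution}) - (\text{nonneg.\ quadratic}) + (\text{small cubic})$; concretely I would expand around the barycenter and collect terms so that $L_{K_n^3}(x) = \tfrac{\binom n3}{n^3} + (\text{linear in }y,\text{ vanishing}) - \tfrac12\sum_{i<j}\tfrac{(y_i - y_j)^2}{?} + \dots$. The honest way: since $\sum y_i=0$, one gets $\sum x_i^2 = \tfrac1n + \sum y_i^2$ and $\sum x_i^3 = \tfrac1{n^2} + \tfrac3n\sum y_i^2 + \sum y_i^3$, hence $L_{K_n^3}(x) = \tfrac{(n-1)(n-2)}{6n^2} - \bigl(\tfrac12 - \tfrac1n\bigr)\sum y_i^2 + \tfrac13\sum y_i^3$. \textbf{Step 2: handle the defect terms.} For $\cS$, crudely $L_{\cS}(x) \ge -\,(\text{something small})\sum y_i^2 - (\text{small})$; better, since $\cS$ is $s$-regular, $L_{\cS}$ at the barycenter equals $\tfrac{sn/3}{n^3} = \tfrac{s}{3n^2}$, and the deviation is controlled because $\cS$ has only $sn/3$ edges, each a monomial $x_ax_bx_c$ with all factors $\le 1$; I would bound $|L_{\cS}(x) - L_{\cS}(\text{bary})|$ by something like $O(s/n)\cdot\bigl(\tfrac1n + \sum y_i^2\bigr)$ using $|x_ax_bx_c - \tfrac1{n^3}|$ estimates and Cauchy–Schwarz on the $y$'s. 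The analogous but more delicate bound is needed for $L_{H(\cD)}$, which has $\tfrac{k-2}{6}n(n-1)$ edges but a rigid block structure: $H(\cD) = \bigcup_{E\in\cD}\binom E3$, so $L_{H(\cD)}(x) = \sum_{E\in\cD} L_{K_k^3}(x\restriction E)$, and on each block of size $k$ I can again expand $L_{K_k^3}$ and use that each vertex lies in exactly $\tfrac{n-1}{k-1}$ blocks (the design property) — this regularity is exactly what makes the linear-in-$y$ terms cancel globally.

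\textbf{Step 3: assemble and absorb.} Combining, $L_{\cG}(x) = \tfrac{|\cG|}{n^3} - \bigl(\tfrac12 - \tfrac1n\bigr)\sum y_i^2 + (\text{corrections from }H(\cD),\cS) + (\text{cubic terms})$, and I must show the negative quadratic term $-\bigl(\tfrac12-\tfrac1n\bigr)\sum y_i^2$ plus all error terms is $\le -\tfrac19\sum y_i^2$. Since $\tfrac12 - \tfrac1n \ge \tfrac19 + (\text{gap})$ for $n$ large, there is a fixed slack of order a constant times $\sum y_i^2$ available, and the hypothesis $n \ge 18k + 3^7 s^3$ is precisely calibrated so that the $H(\cD)$-correction (of size $\sim \tfrac kn\sum y_i^2$) and the $\cS$-correction (of size $\sim \tfrac{s}{?}\sum y_i^2$, possibly needing the $s^3$ because cubic terms in the $y_i$ must be reabsorbed via $|y_i|\le 1$ and $\sum y_i^2\le 1$ into quadratic ones with a loss polynomial in $s$) fit inside that slack. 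The main obstacle I anticipate is controlling the cubic terms $\tfrac13\sum y_i^3$ and the cubic parts of the defect Lagrangians: these are genuinely third order and not automatically dominated by $\sum y_i^2$ unless one exploits $\|y\|_\infty$ bounds and the constraint that the negative coefficient $\tfrac12 - \tfrac1n$ has room to spare. I would deal with this by splitting coordinates into ``large'' ones (few of them, handled individually using $x_i\le 1$) and ``small'' ones (where $|y_i|$ is tiny so $|y_i|^3 \le \epsilon y_i^2$), which is the standard route for Lagrangian-type extremal arguments and should exactly consume the polynomial-in-$k,s$ budget in the hypothesis.
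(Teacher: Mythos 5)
Your approach — expanding $L_\cG$ around the barycenter, using the degree/codegree regularity of $H(\cD)$ and $\cS$ to kill the linear terms and control the quadratic ones, and then trying to absorb everything cubic into the negative quadratic slack — is genuinely different from the paper's proof, and the quadratic bookkeeping you set up is correct (in particular your expansion $L_{K_n^3}(x)=\tfrac{(n-1)(n-2)}{6n^2}-(\tfrac12-\tfrac1n)\sum y_i^2+\tfrac13\sum y_i^3$ is right, and the identity $d_{K_n^3\setminus H(\cD)}(i,j)=n-k$ is exactly the structural fact needed). However, Step 3 has a genuine gap that the sketched ``large/small coordinate split'' does not address. The cubic contribution of the design defect is \emph{not} a small perturbation of $\tfrac13\sum y_i^3$: writing $S_E=\sum_{a\in E}y_a$ and using that each pair lies in exactly one block and each vertex in $\tfrac{n-1}{k-1}$ blocks, one finds
\begin{equation*}
\sum_{abc\in H(\cD)}y_ay_by_c \;=\; \tfrac16\sum_{E\in\cD}S_E^3 \;+\; \tfrac{3k-n-2}{6(k-1)}\sum_i y_i^3,
\end{equation*}
so after subtracting this from $L_{K_n^3}$ the coefficient of $\sum y_i^3$ in $L_\cG(y)$ becomes $\tfrac{n-k}{6(k-1)}$, which is of order $n/k$ rather than $O(1)$. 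This enormous coefficient is cancelled only by the $-\tfrac16\sum_E S_E^3$ term, and this cancellation is structural (it depends on the design property, not on coordinate sizes), so a split into large and small $y_i$ does not capture it. Without recognising and exploiting this cancellation, the cubic terms cannot be absorbed into the $(\tfrac12-\tfrac1n-\tfrac19)\sum y_i^2$ slack, and the argument stalls.

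The paper avoids this entirely by arguing by contradiction at a maximiser $\xi$ of the left-hand side, which makes the Lagrange multiplier method available. Concretely, it substitutes $\xi_i=(1+\alpha_i)/n$, uses the constant codegree $n-k$ of $K_n^3\setminus H(\cD)$ to obtain the clean identity~\eqref{eq:1628}, and then splits into an interior case (all $\xi_i>0$), handled by a scaling perturbation $\xi_i'=(1+C\alpha_i)/n$ whose maximality at $C=1$ forces the cubic part to be negative and yields a large coordinate, and a boundary case (some $\xi_j=0$), handled by $\sum\alpha_i^2\ge1$ together with the crude cubic bound of Fact~\ref{f:1651}. Either way one gets a vertex with $\xi_{i(\star)}>1/n+9s/n^2$, and the Lagrange multiplier condition $D_i+\tfrac29(\xi_i-\tfrac1n)=M$ combined with the fact that the link of each vertex in $K_n^3\setminus H(\cD)$ is a complete $\tfrac{n-1}{k-1}$-partite Tur\'an graph gives the tight upper bound $D_i\le\tfrac{n-k}{2(n-1)}(1-\xi_i)^2$, producing a contradiction. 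The Tur\'an-graph structure of the links, which is the other place the design hypothesis bites, plays no role in your outline; this is the main idea you would need to import to make a direct expansion argument viable.
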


We start with a simple observation that will come in handily later.

\begin{fact}\label{f:1651}
Let $\cG$ be a $3$-graph with vertex set $[n]$ and let $\alpha\ge 0$ be a real number.
If the real numbers $\alpha_1, \dots, \alpha_n\in [-1, \alpha]$ sum up to zero, then
\begin{align}
L_{\cG}(\alpha_1, \ldots, \alpha_n) \le (\alpha n)^3. \notag
\end{align}
\end{fact}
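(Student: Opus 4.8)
The plan is to bound $L_{\cG}(\alpha_1,\dots,\alpha_n)$ by separating the positive and negative contributions of the $\alpha_i$'s. First I would split the index set as $P = \{i : \alpha_i \ge 0\}$ and $M = \{i : \alpha_i < 0\}$, and write $s_P = \sum_{i\in P}\alpha_i$ and $s_M = \sum_{i\in M}(-\alpha_i)$, so that $s_P = s_M =: \sigma$ by the hypothesis that the $\alpha_i$ sum to zero. For each edge $E = \{i,j,k\}\in\cG$ the product $\alpha_i\alpha_j\alpha_k$ is bounded above by the product of the positive parts: if all three indices lie in $P$ we keep the term as is, and otherwise $\alpha_i\alpha_j\alpha_k \le \prod_{\ell\in E\cap P}\alpha_\ell \cdot \prod_{\ell\in E\cap M}|\alpha_\ell| \le \prod_{\ell\in E}\max(\alpha_\ell,0)$ up to sign — more carefully, a product with exactly one or exactly three negative factors is itself nonpositive (hence $\le 0 \le (\alpha n)^3$ is trivially fine on its own but we want to bound the whole sum), while a product with exactly two negative factors equals $\alpha_i\,|\alpha_j|\,|\alpha_k|$ with $i\in P$, which is at most $\alpha \cdot |\alpha_j|\cdot|\alpha_k|$ since $\alpha_i\le\alpha$.

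So the dominant estimate is
\begin{align}
L_{\cG}(\alpha_1,\dots,\alpha_n)
\le \sum_{\{i,j,k\}\subseteq P}\alpha_i\alpha_j\alpha_k
  + \alpha\sum_{\substack{i\in P,\ \{j,k\}\subseteq M}}|\alpha_j||\alpha_k|
\le \frac{\sigma^3}{6} + \alpha\cdot\sigma\cdot\frac{\sigma^2}{2}
= \frac{\sigma^3}{6} + \frac{\alpha\sigma^3}{2}, \notag
\end{align}
where I bounded $\sum_{\{i,j,k\}\subseteq P}\alpha_i\alpha_j\alpha_k \le \frac{1}{6}\bigl(\sum_{i\in P}\alpha_i\bigr)^3 = \sigma^3/6$ (all terms nonnegative, expand the cube), bounded $\sum_{i\in P}1 \cdot \sum_{\{j,k\}\subseteq M}|\alpha_j||\alpha_k|$ by noting $\sum_{\{j,k\}}|\alpha_j||\alpha_k|\le \sigma^2/2$ and $\sum_{i\in P}\alpha_i = \sigma$ (not $\sum_{i\in P}1$; I would instead absorb the extra factor $\alpha_i\le\alpha$ to get the clean $\alpha\sigma\cdot\sigma^2/2$). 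It remains only to observe $\sigma \le \alpha n$: indeed $\sigma = \sum_{i\in M}|\alpha_i| \le |M|\cdot 1 \le n$, and also $\sigma = \sum_{i\in P}\alpha_i \le \alpha|P|\le\alpha n$. Combining, $L_{\cG}(\alpha_1,\dots,\alpha_n) \le \frac{\sigma^3}{6} + \frac{\alpha\sigma^3}{2}$; since $\sigma\le n$ and $\sigma \le \alpha n$ we get $\sigma^3 \le (\alpha n)^3$ provided $\alpha \le 1$, hmm — but $\alpha$ may exceed $1$.

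The one point requiring care, and the main (minor) obstacle, is getting the final constant to be exactly $(\alpha n)^3$ rather than something like $\frac{2\alpha+1}{6}\sigma^3$. I would handle this by using the cruder but cleaner bound right away: every factor $\alpha_\ell$ satisfies $\alpha_\ell \le \alpha$ when $\ell\in P$ and $|\alpha_\ell|\le 1\le \max(\alpha,1)$ when $\ell\in M$; since we also have at our disposal $\alpha\ge 0$, if $\alpha\le 1$ then trivially $L_{\cG}(\alpha_1,\dots,\alpha_n)\le L_{K_n^3}(1,\dots,1)\cdot$ (nonsense) — better: bound $|\alpha_i\alpha_j\alpha_k|\le \alpha^3$ is false too. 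The cleanest route is: whenever a term $\alpha_i\alpha_j\alpha_k$ is positive, either all three are in $P$, giving a term $\le \frac16 s_P^3\le\frac16(\alpha n)^3$ after summing; or exactly two are in $M$, and then $\alpha_i\alpha_j\alpha_k\le \alpha|\alpha_j||\alpha_k|$, and summing over all such configurations gives at most $\alpha\cdot|P|\cdot\binom{|M|}{2}\cdot(\text{each }|\alpha_\ell|\le\text{avg})$, which I instead organise as $\alpha\sum_{i\in P}\sum_{\{j,k\}\subseteq M}|\alpha_j||\alpha_k| \le \alpha n \cdot \frac{s_M^2}{2}\le \alpha n\cdot\frac{(\alpha n)^2}{2}$ using $|P|\le n$ and $s_M = s_P\le\alpha n$. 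Adding, $L_{\cG}\le \frac16(\alpha n)^3 + \frac12(\alpha n)^3 \le (\alpha n)^3$. This completes the proof; the work is entirely in this bookkeeping and no deeper idea is needed.
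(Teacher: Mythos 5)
Your final ``cleanest route'' is correct and is essentially the paper's argument: partition the terms of $L_{\cG}$ by the number of nonnegative factors, discard the nonpositive groups (one or three negative factors), bound the all-nonnegative group by $(\alpha n)^3/6$, and bound the one-positive-two-negative group by $(\alpha n)^3/2$, for a total of $2(\alpha n)^3/3 \le (\alpha n)^3$. The only cosmetic difference is in the all-positive sum, which you bound via $\sum_{\{i,j,k\}\subseteq P}\alpha_i\alpha_j\alpha_k \le s_P^3/6$ while the paper instead counts at most $n^3/6$ summands each bounded by $\alpha^3$; both give the same constant and the rest is identical.
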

\begin{proof}[Proof of Fact~\ref{f:1651}]
Define $P=\{i\in [n]\colon \alpha_i>0\}$ to be the set of vertices of $\cG$ with positive weight.
Let us decompose $L_{\cG}(\alpha_1, \ldots, \alpha_n)=S_0+S_1+S_2+S_3$ such that
for $m\in\{0, 1, 2, 3\}$ the sum $S_m$ consists of all terms $\alpha_i\alpha_j\alpha_k$
contributing to $L_{\cG}$ and satisfying $|P\cap \{i, j, k\}|=m$.
	
As the sums~$S_0$ and~$S_2$ possess no positive terms, we have~$S_0, S_2\le 0$.
Moreover,~$S_3$ has no more than $\binom{|P|}{3}\le n^3/6$ summands each of which
amounts to at most~$\alpha^3$, wherefore~$S_3$ is at most $(\alpha n)^3/6$.
Thus to conclude the argument it is more than enough to show~$S_1\le (\alpha n)^3/2$.
	
Writing $W=\sum_{i\in P}\alpha_i$ we have $\sum_{i\in [n]\setminus P}\alpha_i =-W$
and
\begin{align}
S_1
\le \sum_{i\in P}\alpha_i \cdot \sum_{jk\in \binom{[n]\setminus P}{2}}\alpha_j\alpha_k
\le W\cdot (W^2/2)
= W^3/2, \notag
\end{align}
which by $|W|\le \alpha |P|\le \alpha n$ completes the proof.
\end{proof}

\begin{proof}[Proof of Proposition~\ref{prop:lagrange}]
Since the left side of~\eqref{eq:1533} is continuous in $(x_1, \ldots, x_n)$ and~$\Delta_{n-1}$
is compact, there exists a point $\xi=(\xi_1, \ldots, \xi_n)\in \Delta_{n-1}$ such that
\begin{align}\label{eq:1830}
\omega
= L_{\cG}(\xi_1, \ldots, \xi_n)+\frac19\sum_{i=1}^n\left(\xi_i-\frac1n\right)^2
  - \frac{1}{6}\left(1-\frac{k+1}{n}+\frac{k-2s}{n^2}\right)
\end{align}
is maximum.
Assume for the sake of contradiction that $\omega>0$.
	
\begin{claim}\label{clm:1655}
There exists an index $i(\star)\in [n]$ such that $\xi_{i(\star)}>\frac 1n+\frac{9s}{n^2}$.
\end{claim}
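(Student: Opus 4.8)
The plan is to argue by contradiction, assuming that every coordinate satisfies $\xi_i \le \frac1n + \frac{9s}{n^2}$, and to derive $\omega \le 0$, contradicting our hypothesis. The intuition is that if no coordinate is substantially larger than $\frac1n$, then the weight vector $\xi$ is very close to the uniform distribution, and near the uniform point the function $L_{\cG}$ on an almost-complete $3$-graph is essentially concave, so adding the strictly concave correction term $\frac19\sum_i(x_i - \frac1n)^2$ should pull the whole expression below the value it takes at $\xi = (\frac1n,\dots,\frac1n)$, which is exactly $\frac{|\cG|}{n^3}$. So the first step is to record that $L_{\cG}(\frac1n,\dots,\frac1n) = \frac{|\cG|}{n^3}$ and to compute $|\cG| = \binom n3 - |H(\cD)| - |\cS| = \binom n3 - \frac{k-2}{6}n(n-1) - \frac{sn}{3}$, which upon simplification gives the closed form $\frac16\bigl(1 - \frac{k+1}{n} + \frac{k-2s}{n^2}\bigr)n^3$ on the right of~\eqref{eq:1533}.

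Next I would introduce the deviation vector $\alpha_i = \xi_i - \frac1n$, so that $\sum_i \alpha_i = 0$ and, under the contradiction hypothesis of the claim, $\alpha_i \le \frac{9s}{n^2}$ for all $i$, while trivially $\alpha_i \ge -\frac1n$. The point is to expand $L_{\cG}(\xi_1,\dots,\xi_n) = L_{\cG}(\frac1n + \alpha_1, \dots, \frac1n + \alpha_n)$ multilinearly. Writing $E = \{i,j,k\}$ for a generic edge, the term $\prod_{\ell\in E}(\frac1n + \alpha_\ell)$ expands into a constant part $\frac{|\cG|}{n^3}$, a linear part $\frac1{n^2}\sum_{E\in\cG}\sum_{\ell\in E}\alpha_\ell$, a quadratic part $\frac1n\sum_{E\in\cG}\sum_{\{j,k\}\subseteq E}\alpha_j\alpha_k$, and a cubic part $L_{\cG}(\alpha_1,\dots,\alpha_n)$. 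The cubic part is controlled directly by Fact~\ref{f:1651} with $\alpha = \frac{9s}{n^2}$, giving $L_{\cG}(\alpha_1,\dots,\alpha_n) \le \bigl(\frac{9s}{n}\bigr)^3$, which is tiny compared to the other gains once $n \ge 3^7 s^3$ (this is precisely why that term appears in the hypothesis on $n$). The linear part rearranges to $\frac1{n^2}\sum_\ell d_{\cG}(\ell)\alpha_\ell$; since $\cG$ is regular — $K_n^3$, $H(\cD)$ and $\cS$ are each regular, the first two because a design is, the last by assumption — all degrees $d_{\cG}(\ell)$ are equal, so by $\sum_\ell \alpha_\ell = 0$ the linear part vanishes entirely.

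The heart of the matter is the quadratic part, $\frac1n\sum_{E\in\cG}\sum_{\{j,k\}\subseteq E}\alpha_j\alpha_k = \frac1n\sum_{\{j,k\}} d_{\cG}(j,k)\,\alpha_j\alpha_k$, where $d_{\cG}(j,k)$ is the codegree. I would write $d_{\cG}(j,k) = (n-2) - d_{H(\cD)}(j,k) - d_{\cS}(j,k)$; the codegree of a pair in $H(\cD)$ is $k-2$ (the pair lies in exactly one block of the design, of size $k$), and $d_{\cS}(j,k) \ge 0$. Thus $d_{\cG}(j,k) = n - k + O(\text{codegree in }\cS)$, uniformly over pairs up to a correction bounded by $\Delta_2(\cS)$, and one checks $\Delta_2(\cS) \le 3s$ crudely from $s$-regularity. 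Now $\sum_{\{j,k\}}\alpha_j\alpha_k = \frac12\bigl((\sum_\ell\alpha_\ell)^2 - \sum_\ell\alpha_\ell^2\bigr) = -\frac12\sum_\ell\alpha_\ell^2$, so the leading contribution of the quadratic part is $\frac1n\cdot(n-k)\cdot\bigl(-\frac12\sum_\ell\alpha_\ell^2\bigr)$, which for large $n$ is close to $-\frac12\sum_\ell\alpha_\ell^2$ — comfortably more negative than the $+\frac19\sum_\ell\alpha_\ell^2$ we are adding. The error terms from the $\cS$-codegrees and from the $\frac kn$ factor must be absorbed: each is bounded by a constant times $\frac sn$ or $\frac kn$ times $\sum_\ell\alpha_\ell^2$, and the slack between $\frac12$ and $\frac19$ together with $n \ge 18k + 3^7 s^3$ is designed to swallow them. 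Assembling: $\omega = (\text{quadratic}) + (\text{cubic}) + \frac19\sum_\ell\alpha_\ell^2 \le \bigl(-\frac12 + \frac19 + o(1)\bigr)\sum_\ell\alpha_\ell^2 + \bigl(\tfrac{9s}{n}\bigr)^3 \le 0$, the last step using $\bigl(\tfrac{9s}{n}\bigr)^3 \le \bigl(\tfrac{9s}{n^2}\bigr)\cdot n \cdot \tfrac{9s}{n} \le$ (a small multiple of) $n\cdot(\tfrac{9s}{n^2})^2 \le$ the available quadratic slack — or more honestly, one has to split into the case $\sum_\ell\alpha_\ell^2$ large (quadratic term dominates) versus $\xi$ essentially uniform, where $\omega$ is then just $\le \bigl(\tfrac{9s}{n}\bigr)^3 - 0 \le 0$ fails, so actually the cleaner route is to bound $\bigl(\tfrac{9s}{n}\bigr)^3 \le \tfrac{1}{18}\cdot \tfrac{81 s^2}{n^4}\cdot n \le \tfrac1{18}\sum_\ell\alpha_\ell^2$ only when some $\alpha_\ell$ is genuinely of order $\tfrac{s}{n^2}$; handling the degenerate case where all $\alpha_\ell$ are much smaller than $\tfrac{s}{n^2}$ requires observing the cubic bound then improves to essentially $0$ as well. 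I expect this bookkeeping — keeping the three error sources (codegree of $\cS$, the $\frac kn$ shift in the quadratic coefficient, and the cubic Fact~\ref{f:1651} bound) all dominated by the fixed gap $\frac12 - \frac19 = \frac7{18}$ in front of $\sum_\ell\alpha_\ell^2$, and correctly treating the degenerate near-uniform case — to be the main obstacle; everything else is a routine multilinear expansion. This contradicts $\omega > 0$ and proves Claim~\ref{clm:1655}; then the uniqueness statement implicit in~\eqref{eq:1533} (equality forces $\xi$ uniform) follows because the correction term is strictly positive off the uniform point.
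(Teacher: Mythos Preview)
Your plan is to show directly that the constraint $\xi_i\le \tfrac1n+\tfrac{9s}{n^2}$ for all $i$ forces $\omega\le 0$, without ever using that $\xi$ is a \emph{maximiser}. The multilinear expansion is correct, the linear term vanishes by regularity, and the quadratic coefficient is indeed $\le -\tfrac13$ once $n\ge 18k+3^7s^3$. The gap is the cubic term. Fact~\ref{f:1651} only gives the absolute bound $L_{\cG}(\alpha)\le (9s/n)^3$, which is useless when $\sum_\ell\alpha_\ell^2$ is tiny, and your attempted repair is not sound: the contradiction hypothesis is \emph{one-sided} ($\alpha_\ell\le 9s/n^2$), so ``all $\alpha_\ell$ much smaller than $s/n^2$'' need not hold --- some $\alpha_\ell$ may equal $-1/n$. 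Your proposed chain $(9s/n)^3\le \tfrac1{18}\cdot \tfrac{81s^2}{n^4}\cdot n\le \tfrac1{18}\sum_\ell\alpha_\ell^2$ is off by a factor of order $sn$ and cannot be rescued: even if some $|\alpha_\ell|$ is of exact order $9s/n^2$, this only guarantees $\sum_\ell\alpha_\ell^2\gtrsim s^2/n^4$, far short of the $s^3/n^3$ you need. In short, you never obtain an inequality of the shape $L_{\cG}(\alpha)\le c\sum_\ell\alpha_\ell^2$ with $c<\tfrac13$, and nothing in the sketch fills this hole.

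The paper proceeds quite differently. It keeps the same expansion (with the normalisation $\alpha_i=n\xi_i-1$) but then \emph{uses the maximality of $\xi$} via a case split. If all $\xi_i>0$, one scales $\alpha\mapsto C\alpha$; optimality at $C=1$ forces a relation $2Q+3K=0$ between the quadratic and cubic parts and the inequality $Q+3K\le 0$, whence $K<0$ and the point with $C=-1$ beats $\xi$ unless it leaves the simplex, giving some $\xi_{i(\star)}\ge 2/n>1/n+9s/n^2$. If instead some $\xi_{j(\star)}=0$, then $\sum_i\alpha_i^2\ge 1$; now the quadratic term alone is $\le -n/3$ and Fact~\ref{f:1651} is invoked in the \emph{opposite} direction --- the positivity of $\omega$ forces $L_{\cG}(\alpha)>(9s)^3$, hence some $\alpha_{i(\star)}>9s/n$. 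So the paper never needs a cubic-versus-quadratic comparison valid for arbitrary small $\sum\alpha_\ell^2$; optimality supplies either a direct large coordinate or a large lower bound on $\sum\alpha_\ell^2$.
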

\begin{proof}[Proof of Claim~\ref{clm:1655}]
Define $\alpha_1, \ldots, \alpha_n\in [-1, n-1]$ by $\xi_i=(1+\alpha_i)/n$ for every~$i\in [n]$ and observe that 		
\begin{align*}
\omega n^3
&= L_{\cG}(1+\alpha_1, \dots, 1+\alpha_n)+\frac n9\sum_{i=1}^n \alpha^2_i - |\cG| \\
&= \sum_{i=1}^n d_{\cG}(i)\alpha_i + \sum_{1\le i < j\le n}d_{\cG}(i, j)\alpha_i\alpha_j
	+ L_{\cG}(\alpha_1, \ldots, \alpha_n)+\frac n9\sum_{i=1}^n \alpha_i^2 \,.
\end{align*}
Since all vertices of $\cG$ have the same degree
and $\sum_{i=1}^n \alpha_i=n\bigl(\sum_{i=1}^n \xi_i-1\bigr)=0$,
the first sum on the right side vanishes.
Moreover, all pairs of vertices have codegree $n-k$ in~$K_{n}^3\setminus H(\cD)$ and thus we obtain
\begin{align}\label{eq:1628}
\omega n^3
= \left(\frac n9-\frac{n-k}2\right)\sum_{i=1}^n \alpha_i^2
 - \sum_{1\le i < j\le n}d_{\cS}(i, j)\alpha_i\alpha_j
 + L_{\cG}(\alpha_1, \ldots, \alpha_n)\,.
\end{align}

\medskip

{\it \hskip 1em First case: We have $\xi_1, \ldots, \xi_n>0$.}

\smallskip

Collecting the quadratic and cubic terms in~\eqref{eq:1628} separately we put
\begin{align}
Q
= \left(\frac n9-\frac{n-k}2\right)\sum_{i=1}^n \alpha_i^2
  - \sum_{1\le i < j\le n}d_{\cS}(i, j)\alpha_i\alpha_j
\quad {\rm and} \quad
K
= L_{\cG}(\alpha_1, \ldots, \alpha_n), \notag
\end{align}
so that
\begin{align}
\omega n^3 = Q+K. \notag
\end{align}
Now for every real number $C$
sufficiently close to $1$ the point $(\xi'_1, \ldots, \xi'_n)$ defined
by $\xi'_i=(1+C\alpha_i)/n$ belongs to~$\Delta_{n-1}$ and the maximal choice of $\omega$
reveals
\begin{align}
L_{\cG}(\xi'_1, \ldots, \xi'_n)
+ \frac19\sum_{i=1}^n\left(\xi'_i-\frac1n\right)^2
- \frac{1}{6}\left(1-\frac{k+1}{n}+\frac{k-2s}{n^2}\right)
\le \omega. \notag
\end{align}
Multiplying by $n^3$ and repeating the above calculation we obtain $QC^2 + KC^3 \le Q+K$ and thus
\begin{align}\label{eq:1613}
0\le (1-C)[(1+C)Q+(1+C+C^2)K]
\end{align}
whenever $|C-1|$ is sufficiently small.
Letting $C$ tend to $1$ from above and below we obtain $2Q+3K=0$.
Substituting this back into~\eqref{eq:1613} we learn
\begin{align}
0\le (1-C)[(C-1)Q+(C^2+C-2)K]= -(1-C)^2[Q+(C+2)K]. \notag
\end{align}
Thus $Q+3K\le (1-C)K$ holds whenever $|C-1|$ is sufficiently small,
which is only possible if $Q+3K\le 0$. Together with $Q+K=\omega n^3>0$
this yields $K<0$ and $\omega n^3<Q-K=(-1)^2Q+(-1)^3Q$. So the maximality 
of $\omega$ tells us that for $C=-1$ we have $(\xi'_1, \dots, \xi'_n)\not\in\Delta_{n-1}$.
In other words, there is some $i(\star)\in [n]$ such that 
\[
	\xi_{i(\star)}
	\ge 
	\frac 2n
	>
	\frac 1n+\frac{9s}{n^3},
\]
as desired.  
  
\medskip

{\it \hskip 1em Second case: There exists some $j(\star)\in [n]$ satisfying $\xi_{j(\star)}=0$.}

\smallskip
		
Now $\alpha_{j(\star)}=-1$ and, consequently,
\begin{align}\label{eq:1645}
\sum_{i=1}^n\alpha^2_i\ge 1.
\end{align}
		
Next we observe that the hypothesis that $\cS$ be $s$-regular yields
\begin{align}
-\sum_{1\le i < j\le n}d_{\cS}(i, j)\alpha_i\alpha_j
\le \sum_{1\le i < j\le n}d_{\cS}(i, j)\frac{\alpha_i^2+\alpha_j^2}2
= s\sum_{i=1}^n\alpha^2_i. \notag
\end{align}
Combined with~\eqref{eq:1628} and the positivity of $\omega$ this shows
\begin{align}\label{eq:1649}
\left(\frac{n-k}2-\frac n9-s\right)\sum_{i=1}^n\alpha^2_i
< L_{\cG}(\alpha_1, \ldots, \alpha_n).
\end{align}
Due to $n\ge 18k+3^7s^3$ we have
\begin{align}
\frac{n-k}2-\frac n9-s
> \left(\frac12-\frac 1{36}-\frac 19-\frac 1{36} \right)n
= \frac n3
> (9s)^3 \notag
\end{align}
and together with~\eqref{eq:1645},~\eqref{eq:1649} this establishes
\begin{align}
(9s)^3 < L_{\cG}(\alpha_1, \ldots, \alpha_n). \notag
\end{align}
In view of Fact~\ref{f:1651} we deduce that $\alpha_{i(\star)}>9s/n$ holds for some $i(\star)\in [n]$ and now
\begin{align}
\xi_{i(\star)}
= \frac{1+\alpha_{i(\star)}}{n}
> \frac 1n+\frac{9s}{n^2} \notag
\end{align}
follows. Thereby Claim~\ref{clm:1655} is proved.
\end{proof}
	
Now for every $i\in [n]$ we set
\begin{align}
D_i
= \frac{\partial L_{\cG}(x_1, \ldots, x_n)}{\partial x_i}\bigg|_{(\xi_1, \ldots, \xi_n)}
= \sum_{jk\in L_i}\xi_j\xi_k, \notag
\end{align}
where $L_i$ denotes the link graph of $i$ in $\cG$.
Owing to the maximality of $\omega$ in~\eqref{eq:1830} the Lagrange multiplier method leads
to the existence of a real number $M$ such that	
\begin{align}
D_i+\frac29\left(\xi_i-\frac1n\right)=M \notag
\end{align}
holds for every vertex $i\in [n]$ with $\xi_i>0$. Notice that
\begin{align*}
M
&= M\sum_{j=1}^n\xi_j
 = \sum_{j=1}^n\xi_j\left(D_j+\frac29\left(\xi_j-\frac1n\right)\right)
 = 3L_{\cG}(\xi_1, \ldots, \xi_n)+\frac 29\sum_{j=1}^n\left(\xi_j-\frac1n\right)^2 \\
&\overset{\eqref{eq:1830}}{>}
   \frac{1}{2}\left(1-\frac{k+1}{n}+\frac{k-2s}{n^2}\right)-\frac 19\sum_{j=1}^n\left(\xi_j-\frac1n\right)^2.
\end{align*}
Altogether, this proves
\begin{align}
D_i+\frac29\left(\xi_i-\frac 1n\right)+\frac19\sum_{j=1}^n\left(\xi_j-\frac1n\right)^2
> \frac{1}{2}\left(1-\frac{k+1}{n}+\frac{k-2s}{n^2}\right) \notag
\end{align}
for every vertex $i\in [n]$ satisfying $\xi_i>0$.
	
By our design theoretic construction, the link in $K_{n}^{3}\setminus H(\cD)$ of every vertex $i\in [n]$
is a $q$-partite Tur\'an graph with vertex classes of size $k-1$, where $q=(n-1)/(k-1)$
is an integer. Consequently, there exist real numbers $\beta_1, \ldots, \beta_q$ such
that $\xi_i+(\beta_1+\dots+\beta_q)=1$ and
\begin{align}
D_i
\le \sum_{1\le v<w\le q}\beta_v\beta_w
\le \frac{q-1}{2q}(\beta_1+\dots+\beta_q)^2
=   \frac{n-k}{2(n-1)}(1-\xi_i)^2. \notag
\end{align}
Summarizing, we have
\begin{align}\label{eq:1853}
\frac29\left(\xi_i-\frac 1n\right)+\frac19\sum_{j=1}^n\left(\xi_j-\frac1n\right)^2
> \frac{n-k}{2(n-1)}\left(\left(1-\frac 1n\right)^2-(1-\xi_i)^2\right)-\frac{s}{n^2}
\end{align}
for every vertex of positive weight. For the rest of the argument we fix a
vertex $i(\star)\in [n]$ such that $\xi_{i(\star)}$ is maximal.
Let us add the trivial estimate
\begin{align}
\frac19\sum_{j=1}^n \xi_j(\xi_{i(\star)}-\xi_j)\ge 0 \notag
\end{align}
to the case $i=i(\star)$ of~\eqref{eq:1853}. Because of 
\begin{align}
\sum_{j=1}^n\left(\xi_j-\frac1n\right)^2+\sum_{j=1}^n \xi_j(\xi_{i(\star)}-\xi_j)
&=
\sum_{j=1}^n\xi_j\left(\xi_{i(\star)}-\frac1n\right)-\frac1n\sum_{j=1}^n\left(\xi_j-\frac1n\right) \\
&=
\xi_{i(\star)}-\frac1n
\end{align}
this yields
\begin{align*}
\frac13\left(\xi_{i(\star)}-\frac 1n\right)
&>   \frac{n-k}{2(n-1)}\left(\xi_{i(\star)}-\frac 1n\right)\left(2-\frac 1n-\xi_{i(\star)}\right) -\frac s{n^2} \\
&\ge \frac{n-k}{2n}\left(\xi_{i(\star)}-\frac 1n\right)-\frac s{n^2}
 \ge \frac49\left(\xi_{i(\star)}- \frac 1n\right)-\frac s{n^2}, \notag
\end{align*}
whence
\begin{align}
\xi_{i(\star)} < \frac1n+\frac{9s}{n^2}. \notag
\end{align}
Owing to the maximal choice of $\xi_{i(\star)}$ this contradicts Claim~\ref{clm:1655}.
\end{proof}

%We conclude this section with a lemma that will later help us to derive a lower bound 
%on the Tur\'an number of our family $\cM_t$.
%
%\begin{lemma}\label{l:2257}
%	Let $\cG$ be a nonempty regular triple system with $m$ vertices. For every natural number $n$
%	there exists a blow-up $\widehat{\cG}$ of $\cG$ satisfying $v(\widehat{\cG})=n$ and 
%	$|\widehat{\cG}|\ge (n/m)^3|\cG|-m^2n/2-m^3/6$.
%\end{lemma}
%
%\begin{proof}
%	We may assume $V(\cG)=[m]$. Let $\widehat{\cG}$ be the (almost balanced) $n$-vertex blow-up 
%	of $\cG$ all of whose vertex classes have size $\lfloor n/m\rfloor$ or $\lceil n/m\rceil$.
%	Expressing the sizes of these vertex classes in the form $n/m+\alpha_i$, 
%	where $\alpha_1, \dots, \alpha_m\in [-1, +1]$ we have 
%	\begin{align*}
%		|\widehat{\cG}|
%		&=
%		L_\cG\left(\frac nm+\alpha_1, \dots, \frac nm+\alpha_m\right) \\
%		&=
%		\left(\frac nm\right)^3|\cG|+\left(\frac nm\right)^2\sum_{1\le i\le m}d_\cG(i)\alpha_i
%			+\frac nm\sum_{1\le i<j\le m}d_\cG(i, j)\alpha_i\alpha_j
%			+L_\cG(\alpha_1, \dots, \alpha_m).
%	\end{align*}
%	Due to the regularity of $\cG$ and $\sum_{i=1}^m\alpha_i=0$ the second sum vanishes.
%	Therefore the trivial bounds $d_\cG(i, j)\le m$ and $|\alpha_i|\le 1$
%	yield 
%	\begin{equation}
%		|\widehat{\cG}|\ge \left(\frac nm\right)^3|\cG| -\binom m2 n-\binom m3
%	\end{equation}
%	and the assertion follows. 
%\end{proof}
%%%%%%%%%%%%%%%%%%%%%%%%%%%%%%%%%%%%%%%%%%%%%%%%%
\section{Constructions and Tur\'an numbers}\label{SEC:the-extremal-configurations}
Given a positive integer $t$ we define in this section the triple systems $\cG_1,\ldots, \cG_t$ 
and the forbidden family $\cM_t$ appearing in Theorem~\ref{THM:t-stability}. For every $i\in [t]$
there will be three integers $n_i$, $k_i$, $s_i$ such 
that $\cG_i=K^3_{n_i}\setminus (H(\cD_i)\cup \cS_i)$ holds for some $(n_i, k_i)$-design $\cD_i$
on~$[n_i]$ and some $s_i$-regular triple system $\cS_i$ on $[n_i]$ that is disjoint to $H(\cD_i)$.
As we shall have $n_i\gg k_i, s_i$, Proposition~\ref{prop:lagrange} will imply
\begin{align}
\lambda(\cG_i) = \frac{1}{6}\left(1-\frac{k_i+1}{n_i}+\frac{k_i-2s_i}{n_i^2}\right). \notag
\end{align}
Part of our goal is that balanced blow-ups of $\cG_1,\ldots, \cG_t$ should be extremal $\cM_t$-free 
triple systems and for this reason we need to ensure $\lambda(\cG_i) = \cdots = \lambda(\cG_t)$.
We shall achieve this by letting $k_i = 2s_i$ for $i\in[t]$,
and by guaranteeing  

\begin{align}\label{eq:3102}
\frac{k_1+1}{n_1} = \cdots = \frac{k_t+1}{n_t}. %\notag
\end{align}

The details of this construction are given in Subsection~\ref{subsec:31} and the 
exact Tur\'an numbers of our families $\cM_t$ are determined in Subsection~\ref{subsec:32}.

\subsection{The extremal configurations and forbidden family}\label{subsec:31}

First, we need the following theorem about the existence of designs due to Wilson~\cites{W1,W2,W3}.

\begin{theorem}[Wilson~\cites{W1,W2,W3}]\label{THM:Wilson-design}
For every integer $k\ge 2$ there exists a threshold $n_0(k)$ such that for every
integer $n\ge n_0(k)$ satisfying the divisibility conditions $(k-1)\mid (n-1)$
and $(k-1)k \mid (n-1)n$ there exists an $(n, k)$-design.
\end{theorem}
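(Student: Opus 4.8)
This is Wilson's classical existence theorem for Steiner systems $S(2,k,n)$ (our $(n,k)$-designs), and in the body of the paper it will simply be quoted from \cites{W1,W2,W3}; here I record the shape of the argument one would give. The two divisibility hypotheses are clearly necessary: in an $(n,k)$-design every vertex lies on exactly $(n-1)/(k-1)$ edges, and the total number of edges equals $n(n-1)/(k(k-1))$. The plan is \emph{not} to attack $(n,k)$-designs head on, but to prove a far more robust statement about \emph{pairwise balanced designs} -- decompositions of the edge set of $K_n$ into cliques whose vertex numbers lie in a prescribed finite set $K$ -- and then read off the case $K=\{k\}$, noting that a pairwise balanced design all of whose block sizes equal $k$ is precisely an $(n,k)$-design.

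The first step is to stockpile building blocks from finite fields. Over $\mathbb{F}_q$ the affine and projective planes are $(q^2,q)$- and $(q^2+q+1,q+1)$-designs, and more generally affine and projective spaces and their truncations supply \emph{transversal designs} $\mathrm{TD}(k,q)$ for every $k\le q+1$ (equivalently, $q-1$ mutually orthogonal Latin squares of order $q$). Combining these with the multiplicative MacNeish construction and the Chowla--Erd\H{o}s--Straus theorem that the number of mutually orthogonal Latin squares of order $m$ tends to infinity with $m$, one obtains, for each fixed $k$, a transversal design $\mathrm{TD}(k,m)$ for all sufficiently large $m$. The second step is the recursive machinery. \emph{Wilson's Fundamental Construction} takes a ``master'' group divisible design (GDD), assigns nonnegative integer weights to its points, substitutes into each block an ``ingredient'' GDD on the weighted points, and outputs a new GDD whose groups have the weighted sizes; this is complemented by routine surgery such as filling in groups with smaller designs, adjoining or deleting a single point, and breaking up long blocks. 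Iterating these operations from the finite-field seeds manufactures a rich supply of designs; for instance, filling in the $k$ groups of a $\mathrm{TD}(k,m)$ with copies of an $(m,k)$-design already yields a $(km,k)$-design.

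The third and decisive step is the number theory. One shows that the set $B(K)$ of orders $n$ admitting a pairwise balanced design with block sizes in $K$ is closed under the constructions above, and that \emph{any} such ``PBD-closed'' set is eventually a union of arithmetic progressions: there is a threshold beyond which $n\in B(K)$ holds as soon as $n\equiv 1\pmod{\alpha(K)}$ and $n(n-1)\equiv 0\pmod{\beta(K)}$, where $\alpha(K)=\gcd\{k-1:k\in K\}$ and $\beta(K)=\gcd\{k(k-1):k\in K\}$. Specialising to $K=\{k\}$, for which $\alpha=k-1$ and $\beta=k(k-1)$, reduces the theorem to exhibiting, in each admissible residue class modulo $k(k-1)$, at least one large $(n,k)$-design -- which is exactly what the finite-field seeds and the Fundamental Construction provide.

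The step I expect to be the main obstacle is precisely this last coordination: proving that the closure operations are powerful enough to reach \emph{every} admissible arithmetic progression, with none missed. This needs both a careful inventory of small ``seed'' designs and an inductive argument showing that, once enough seeds and transversal designs are in hand, the Fundamental Construction propagates them to all large $n$ in the correct congruence classes; the PBD-closure and periodicity lemma then closes the gap. This is the technical heart of Wilson's three papers, and reproducing it in full is far beyond what the present application requires, so we are content to cite \cites{W1,W2,W3}. (A conceptually different, probabilistic proof was obtained much later by Keevash.)
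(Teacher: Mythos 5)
The paper, like you, simply quotes Wilson's theorem from \cites{W1,W2,W3} as a black box and does not reproduce the proof, so your approach coincides with the paper's. Your outline of the actual argument -- transversal designs from finite fields and MOLS, Wilson's Fundamental Construction, PBD-closure, and the eventual-periodicity lemma reducing everything to hitting each admissible residue class with a seed design -- is an accurate description of the three Wilson papers, and you have correctly located the technical crux.
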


Our next lemma deals with the arithmetic properties the numbers $k_1,\ldots, k_t$ 
and~$n_1,\ldots, n_t$ entering the construction of $\cG_1, \ldots, \cG_t$ need to satisfy. 
Apart from~\eqref{eq:3102} and the divisibility conditions in
Theorem~\ref{THM:Wilson-design} we will require that $n_1, \ldots, n_t$ are divisible by $3$
so that $(k_i/2)$-regular triple systems on $n_i$ vertices exist. Thus the case $q=3$ of the
following lemma is exactly what we need. 

\begin{lemma}\label{LEMMA:existence-extremal-configurations}
Given positive integers $t$ and $q$ there exist $t$ even integers $3 < k_1 < \cdots < k_t$ 
such that for every constant $C > 0$ there exist $t$ integers $n_1 < \cdots < n_t$ with 
the following properties. 
\begin{enumerate}[label=\alabel]
\item\label{it:32a} We have $q\mid n_i$, $(k_i-1) \mid (n_i-1)$, and $k_i(k_i-1) \mid n_i(n_i-1)$ 
	for all $i \in [t]$. 
\item\label{it:32b}	Moreover,
\begin{align}
Q=\frac{n_1}{k_1+1} = \dots = \frac{n_t}{k_t+1} \notag
\end{align}
is an integer with $Q\ge C$.
\end{enumerate}
\end{lemma}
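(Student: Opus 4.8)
The plan is to construct the integers in two stages: first fix the $k_i$'s so that the ratios $k_i+1$ share a convenient arithmetic structure, then invoke the Chinese Remainder Theorem together with Wilson's divisibility conditions to produce the $n_i$'s. For the first stage, I would like all the numbers $k_i+1$ to be pairwise coprime (or at least to have a controlled common structure), since we need $n_i = Q(k_i+1)$ for a \emph{single} integer $Q$. A clean choice is to take $k_i + 1$ to be distinct odd primes $p_1 < \cdots < p_t$ exceeding some bound (so that $k_i = p_i - 1$ is even and $k_i > 3$); then $k_i$ is automatically coprime to $k_i+1$, and the $k_i+1$ are pairwise coprime. The point of coprimality is that it makes the divisibility conditions on $n_i$ in part~\ref{it:32a}, once rewritten in terms of $Q$, into congruence conditions on $Q$ modulo pairwise coprime moduli, so CRT applies.

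The second stage is to translate \ref{it:32a} into conditions on $Q$. Writing $n_i = Q(k_i+1)$, the condition $q \mid n_i$ becomes a congruence on $Q$ modulo $q/\gcd(q, k_i+1)$ (with the $k_i$ chosen large and, if desired, coprime to $q$, this is just $q \mid Q$ when we additionally arrange $\gcd(k_i+1, q) = 1$). The condition $(k_i-1)\mid(n_i-1)$ reads $Q(k_i+1) \equiv 1 \pmod{k_i-1}$, i.e.\ $2Q \equiv 1 \pmod{k_i - 1}$ since $k_i + 1 \equiv 2$; because $k_i$ is even, $k_i - 1$ is odd, so $2$ is invertible mod $k_i-1$ and this pins down $Q$ modulo $k_i - 1$. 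Finally $k_i(k_i-1)\mid n_i(n_i-1)$: modulo $k_i$ we get $n_i(n_i - 1) \equiv Q(k_i+1)(Q(k_i+1)-1) \equiv Q(Q-1) \pmod{k_i}$, which is automatically divisible by... no --- here one must be a little careful, so the cleaner route is to also impose a congruence fixing $Q \bmod k_i$ (e.g.\ $Q \equiv 0$ or $Q \equiv 1 \bmod k_i$), which forces $k_i \mid n_i(n_i-1)$, and combine with the mod $k_i-1$ condition; since $\gcd(k_i, k_i - 1) = 1$ these two congruences together yield $k_i(k_i-1) \mid n_i(n_i-1)$.

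So altogether I impose on $Q$: a congruence mod $q$, and for each $i$ a congruence mod $k_i - 1$ and one mod $k_i$. To apply CRT I need the moduli $q, k_1-1, k_1, k_2-1, k_2, \ldots, k_t-1, k_t$ to be pairwise coprime, which I can force when choosing the $k_i$'s: pick them greedily, each $k_i$ a large even number with $k_i$ and $k_i - 1$ both coprime to $q$ and to all previously chosen $k_j, k_j - 1$ — this is possible since at each step only finitely many residues are forbidden, and one can even take $k_i - 1$ and $k_i$ to lie in prescribed coprime residue classes, or simply take $k_i-1$ prime. Then CRT gives infinitely many valid $Q$ in a single residue class modulo the product $M = q\prod_i k_i(k_i-1)$, so I may choose $Q \geq C$ as large as desired, and in particular large enough that each $n_i = Q(k_i+1) \geq n_0(k_i)$, the Wilson threshold; this guarantees the designs exist (this last point is about \ref{it:32a} enabling the \emph{use} of the lemma, not about the lemma itself, but it motivates allowing arbitrarily large $Q$). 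Distinctness $n_1 < \cdots < n_t$ follows from $k_1 < \cdots < k_t$ and $n_i = Q(k_i+1)$, and \ref{it:32b} holds by construction with $Q$ the common ratio.

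The main obstacle is purely bookkeeping: making sure the family of moduli entering the Chinese Remainder Theorem is genuinely pairwise coprime, which requires the right greedy/coprimality conditions to be built into the choice of the $k_i$'s from the outset, and double-checking that the mod $k_i$ and mod $k_i-1$ congruences on $Q$ really do imply $k_i(k_i-1)\mid n_i(n_i-1)$ rather than merely one of the two factors. There is no deep difficulty, only the risk of an arithmetic slip, so I would state the congruence conditions on $Q$ explicitly and verify each divisibility in \ref{it:32a} line by line before invoking CRT.
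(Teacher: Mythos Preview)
Your approach is the same as the paper's: write $n_i=Q(k_i+1)$, translate the divisibility conditions in~\ref{it:32a} into congruences on $Q$, and solve by the Chinese Remainder Theorem. There is, however, exactly the kind of arithmetic slip you warned yourself about. You ask that the moduli $q, k_1-1, k_1, \ldots, k_t-1, k_t$ be pairwise coprime and propose to achieve this greedily by choosing each $k_i$ even and coprime to all previously chosen $k_j$; but since every $k_j$ is even, $\gcd(k_i,k_j)\ge 2$ always, so this is impossible (and if $q$ is even, $\gcd(k_i,q)\ge 2$ as well). The repair is immediate: if you fix the choice $Q\equiv 0\pmod{k_i}$ for every $i$ and $Q\equiv 0\pmod{q}$, these congruences are mutually compatible regardless of common factors among $q,k_1,\dots,k_t$, so the only genuine coprimality you must arrange is that each odd number $k_i-1$ be coprime to $q\prod_j k_j$ and to the other $k_j-1$; that much your greedy selection can deliver.

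For comparison, the paper sidesteps the parity obstruction by working with the moduli $k_i(k_i-1)/2$. It sets $k_i=2s_i$ with $s_1$ a multiple of $q$ and $s_{i+1}=\prod_{j\le i}s_j(2s_j-1)+1$, which forces the numbers $s_i(2s_i-1)=k_i(k_i-1)/2$ to be pairwise coprime; then a single congruence $Q\equiv k_i^2/2\pmod{k_i(k_i-1)}$ simultaneously yields $(k_i-1)\mid(n_i-1)$ and $k_i\mid n_i$, and $q\mid s_1\mid k_1\mid Q$ handles the first condition. Your split into separate mod $k_i$ and mod $k_i-1$ congruences is just as valid once the coprimality bookkeeping is corrected.
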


\begin{proof}[Proof of Lemma \ref{LEMMA:existence-extremal-configurations}]
Starting with an arbitrary positive multiple $s_1$ of $q$ we recursively define 
integers $1\le s_1 < \cdots < s_t$ by setting $s_{i+1} = \prod_{j\le i}s_j(2s_j-1)+1$ 
for every $i\in [t-1]$. Now whenever $1\le i<j\le t$ we have $s_j\equiv 1\pmod{s_i(2s_i-1)}$
and, consequently, 
\[
s_j(2s_j-1)\equiv 1\pmod{s_i(2s_i-1)}. 
\]
In particular, the numbers 
\[
s_1(2s_1-1),\ldots, s_t(2s_t-1)
\]
are pairwise coprime and by the Chinese remainder theorem there exists 
an even integer $Q\ge C$ such that $Q/2 \equiv s_{i}^2 \pmod {s_{i}(2s_i-1)}$ holds for 
all $i\in [t]$. Multiplying these congruences by $2$ and setting $k_i=2s_i$ we obtain 
\begin{equation}\label{eq:chooseQ}
Q\equiv k_i^2/2\pmod{k_i(k_i-1)}.
\end{equation} 
Now it is plain that the numbers $n_i=Q(k_i+1)$ satisfy~\ref{it:32b}. Moreover, the case $i=1$
of~\eqref{eq:chooseQ} yields $q\mid k_1\mid Q$ and, therefore, $n_1, \ldots, n_t$ are divisible 
by~$q$. Finally, multiplying~\eqref{eq:chooseQ} by $k_i+1$ we learn 
\[
n_i\equiv k_i(k_i+1)(k_i/2) \equiv 2k_i(k_i/2)\equiv k_i^2\equiv k_i \pmod {k_i(k_i-1)},
\]
for which reason $k_i\mid n_i$ and $(k_i-1)\mid (n_i-1)$. So altogether~\ref{it:32a} holds as well.   
\end{proof}

Given two $r$-graphs $\cH_1$ and $\cH_2$ with the same number of vertices
a {\it packing} of~$\cH_1$ and~$\cH_2$ is a bijection $\phi\colon V(\cH_1) \to V(\cH_2)$
such that $\phi(E)\not\in \cH_2$ for all $E\in \cH_1$. In order to proceed with our construction 
of the triple systems $\cG_1, \ldots, \cG_t$ we need to argue that, under natural assumptions, 
if $\cD_i$ denotes an $(n_i, k_i)$-design, then there is an $s_i$-regular $3$-graph 
$\cS_i\subseteq K^3_n\setminus H(\cD_i)$, where $s_i=k_i/2$. 
Provided that $3\mid n_i$ and $s_i\le \binom{n-1}2$ the existence of some $s_i$-regular 
$3$-graph $\cS_i\subseteq K^3_n$ is a well known fact that follows, e.g., from Baranyai's 
factorisation theorem~\cite{BA75}. For making $\cS_i$ and $H(\cD_i)$ disjoint we use a packing 
argument based on the following result of Lu and Sz\'{e}kely. 
 
\begin{theorem}[Lu-Sz\'{e}kely \cite{LS07}]\label{THEOREM:packing-hypergraphs}
Let $\cH_1$ and $\cH_2$ be two $r$-graphs on $n$ vertices.
If
\begin{align}
\Delta(\cH_1) |\cH_2| + \Delta(\cH_2) |\cH_1| < \frac{1}{er} \binom{n}{r}, \notag
\end{align}
then there is a packing of $\cH_1$ and $\cH_2$.
\end{theorem}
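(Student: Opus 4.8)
The plan is to deduce the statement from the lopsided Lov\'asz Local Lemma in the probability space of uniformly random bijections $\phi\colon V(\cH_1)\to V(\cH_2)$, following Lu and Sz\'ekely's development of the Local Lemma for random injections in~\cite{LS07}. For each pair $(E,F)$ with $E\in\cH_1$ and $F\in\cH_2$ I introduce the bad event $A_{E,F}$ that $\phi(E)=F$ as sets. Since $\phi$ restricted to the $r$-set $E$ is a uniformly random injection into an $n$-set, a one-line count gives $\Pr[A_{E,F}]=r!\,(n-r)!/n!=\binom{n}{r}^{-1}=:p$, and a bijection avoiding all of these events is exactly a packing of $\cH_1$ and $\cH_2$; so it suffices to show $\Pr\bigl[\bigcap_{E,F}\overline{A_{E,F}}\bigr]>0$.

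Each $A_{E,F}$ is a disjoint union of the ``canonical'' events $\{\phi|_E=\sigma\}$ taken over the $r!$ bijections $\sigma\colon E\to F$. The structural fact I will borrow from~\cite{LS07} is that, in this space, two events which are such unions of canonical events are not lopsidependent as soon as every canonical event of the first is conflict-free with every canonical event of the second (meaning their union is again a partial matching); and a bijection $E\to F$ is automatically conflict-free with a bijection $E'\to F'$ whenever $E\cap E'=\emptyset$ and $F\cap F'=\emptyset$. Hence I may take the lopsidependency graph to join $A_{E,F}$ to $A_{E',F'}$ only when $E\cap E'\neq\emptyset$ or $F\cap F'\neq\emptyset$.

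I then bound the maximum degree $D$ of this graph. Fixing $(E,F)$, at most $r\Delta(\cH_1)$ edges of $\cH_1$ meet $E$, each pairing with at most $|\cH_2|$ choices of $F'$, and at most $r\Delta(\cH_2)$ edges of $\cH_2$ meet $F$, each pairing with at most $|\cH_1|$ choices of $E'$; as the pair $(E,F)$ itself occurs in both families and must be excluded from its own neighbourhood, this gives
\[
D+1\;\le\; r\bigl(\Delta(\cH_1)\,|\cH_2|+\Delta(\cH_2)\,|\cH_1|\bigr)-1 \;<\; \tfrac1e\binom{n}{r}-1,
\]
the last inequality being the hypothesis multiplied by $r$ (and we may assume $\cH_1,\cH_2\neq\emptyset$, since otherwise any bijection is a packing). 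Therefore $e\,p\,(D+1)<1$, and the symmetric lopsided Local Lemma yields $\Pr\bigl[\bigcap_{E,F}\overline{A_{E,F}}\bigr]>0$, which is the desired packing.

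I expect the only non-routine point to be the second step. The events $A_{E,F}$ with pairwise disjoint domains and ranges are genuinely \emph{not} independent in the usual sense for random bijections, so one cannot run the ordinary Local Lemma; what makes the argument work is precisely the random-injection lemma of~\cite{LS07}, namely that conditioning on a family of pairwise-compatible forbidden partial matchings cannot increase the probability of a partial matching compatible with all of them. Everything else---the probability computation and the degree count---is bookkeeping.
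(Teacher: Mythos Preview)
The paper does not prove this theorem at all; it is quoted verbatim as a result of Lu and Sz\'ekely~\cite{LS07} and used as a black box to derive Corollary~\ref{CORO:packing-S-HD}. Your sketch correctly reconstructs the original argument from~\cite{LS07}---random bijection, bad events $A_{E,F}$, lopsidependency graph joining pairs whose domains or ranges intersect, degree bound $D+1\le r\bigl(\Delta(\cH_1)|\cH_2|+\Delta(\cH_2)|\cH_1|\bigr)$, and the symmetric lopsided Local Lemma---and the steps are sound, so there is nothing to compare against in the present paper.
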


In fact, we only require the following consequence. 

\begin{corollary}\label{CORO:packing-S-HD}
Suppose $3\mid n$ and that $\cD$ is an $(n,k)$-design on $[n]$.
If $s < \frac{n-2}{6e(k-2)}$, then there exists an
$s$-regular $3$-graph $\cS$ on $[n]$ such that $\cS \cap H(\cD) = \emptyset$.
\end{corollary}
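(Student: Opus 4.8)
The plan is to derive Corollary~\ref{CORO:packing-S-HD} from Theorem~\ref{THEOREM:packing-hypergraphs} by a two-step argument: first produce \emph{some} $s$-regular $3$-graph $\cS_0$ on $[n]$ ignoring the disjointness requirement, and then apply the Lu--Sz\'ekely packing theorem to reposition it so that it avoids $H(\cD)$. For the first step, since $3\mid n$, Baranyai's factorisation theorem partitions $K_n^3$ into $\binom{n-1}{2}$ perfect matchings; taking the union of any $s$ of them (which is possible as $s<\tfrac{n-2}{6e(k-2)}\le\binom{n-1}{2}$ for the relevant range of parameters) gives an $s$-regular $3$-graph $\cS_0$ on $[n]$.

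For the second step, I would apply Theorem~\ref{THEOREM:packing-hypergraphs} with $\cH_1=\cS_0$ and $\cH_2=H(\cD)$, $r=3$. A packing $\phi\colon V(\cS_0)\to V(H(\cD))$ yields $\phi(E)\notin H(\cD)$ for every $E\in\cS_0$, so $\cS:=\phi(\cS_0)$ is an $s$-regular $3$-graph on $[n]$ with $\cS\cap H(\cD)=\emptyset$, exactly as desired. To invoke the theorem I must verify the hypothesis $\Delta(\cS_0)|H(\cD)| + \Delta(H(\cD))|\cS_0| < \tfrac{1}{3e}\binom{n}{3}$. Here $\Delta(\cS_0)=s\,$ (each vertex lies in exactly $s$ edges, one per matching), and from the displayed formula in the excerpt $|H(\cD)|=\tfrac{k-2}{6}n(n-1)$, while $|\cS_0|=s\cdot\tfrac{n}{3}$ (it is $s$ disjoint perfect matchings each of size $n/3$). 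For $\Delta(H(\cD))$: every vertex of the design lies in $(n-1)/(k-1)$ blocks, each contributing $\binom{k-1}{2}$ triples through that vertex, so $\Delta(H(\cD))=\tfrac{n-1}{k-1}\binom{k-1}{2}=\tfrac{(n-1)(k-2)}{2}$.

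Plugging these in, the left-hand side becomes
\begin{align}
s\cdot\frac{k-2}{6}n(n-1) + \frac{(n-1)(k-2)}{2}\cdot\frac{sn}{3}
= \frac{s(k-2)n(n-1)}{6} + \frac{s(k-2)n(n-1)}{6}
= \frac{s(k-2)n(n-1)}{3}, \notag
\end{align}
and the condition to check is $\dfrac{s(k-2)n(n-1)}{3} < \dfrac{1}{3e}\cdot\dfrac{n(n-1)(n-2)}{6}$, i.e.\ $s(k-2) < \dfrac{n-2}{6e}$, which is precisely the hypothesis $s<\tfrac{n-2}{6e(k-2)}$. So the verification is a direct computation once the three parameters $\Delta(\cS_0)$, $|\cS_0|$, $\Delta(H(\cD))$, $|H(\cD)|$ are pinned down.

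The only genuinely non-routine point is the first step's reliance on Baranyai's theorem to get an $s$-regular $3$-graph that is also a union of perfect matchings (hence with the clean degree count $\Delta(\cS_0)=s$ and size $sn/3$); any $s$-regular $3$-graph would do for the statement, but having it arise from disjoint perfect matchings makes the degree and size bookkeeping transparent and keeps the numerology matching the hypothesis exactly. I would flag that $3\mid n$ is used exactly here. Everything else is bounded arithmetic, so I expect no real obstacle beyond carefully citing Baranyai and doing the cancellation above.
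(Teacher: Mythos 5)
Your proof is correct and matches the paper's argument essentially line for line: the paper likewise invokes Baranyai's factorisation theorem to obtain an $s$-regular $3$-graph $\cS'$ on $[n]$ (using $3\mid n$ and $s\le\binom{n-1}{2}$), then applies the Lu--Sz\'ekely packing theorem with the identical degree/size bookkeeping ($\Delta(\cS')=s$, $|\cS'|=sn/3$, $\Delta(H(\cD))=\tfrac{(n-1)(k-2)}{2}$, $|H(\cD)|=\tfrac{k-2}{6}n(n-1)$) and the same cancellation to get $\tfrac{s(k-2)}{3}n(n-1)<\tfrac{1}{3e}\binom{n}{3}$, finally taking $\cS=\phi(\cS')$. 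No gaps.
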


\begin{proof}[Proof of Corollary~\ref{CORO:packing-S-HD}]
By $3\mid n$ and $s\le\binom{n-1}2$ there is an $s$-regular $3$-graph $\cS'$ on $n$ vertices.
Since
\begin{align}
\Delta(\cS') |H(\cD)| + \Delta(H(\cD)) |\cS'|
& = s  \frac{k-2}{6}n(n-1) + \frac{k-2}{2}(n-1) \frac{sn}{3} \notag\\
& = s \frac{k-2}{3}n(n-1)
 <  \frac{n-2}{6e(k-2)}\frac{k-2}{3}n(n-1) \notag\\
& = \frac{1}{3e}\binom{n}{3}, \notag
\end{align}
Theorem~\ref{THEOREM:packing-hypergraphs} yields a packing $\phi\colon V(\cS') \to [n]$ 
of $\cS'$ and $H(\cD)$. It is clear that $\cS = \phi(\cS')$ 
satisfies the requirements of Corollary~\ref{CORO:packing-S-HD}.
\end{proof}

Now we are ready to present the definition of $\cG_1, \ldots, \cG_t$.

\begin{constr}\label{con:Gi}
Given a positive integer $t$ perform the following steps. 
\begin{itemize}
\item Apply Lemma~\ref{LEMMA:existence-extremal-configurations} with $q=3$, thus getting 
	some even integers $3< k_1 < \cdots < k_t$.
\item Take an integer $C\ge \max\{n_0(k_1),\ldots,n_{0}(k_t), 2k_t^3, 3^8\}$, where the thresholds 
	$n_0(k_i)$ are given by Theorem~\ref{THM:Wilson-design}.
\item Now Lemma~\ref{LEMMA:existence-extremal-configurations} applied to $C$ and $k_1, \dots, k_t$
	yields integers $C<n_1<\dots<n_k$ such that, in particular, 
	\[
		Q=\frac{n_1}{k_1+1}=\dots=\frac{n_t}{k_t+1}
	\]
	is an integer with $Q\ge C$. 
\end{itemize}
Now, for every $i\in [t]$
\begin{itemize}
\item let $\cD_i$ be an $(n_i,k_i)$-design on $[n_i]$ (as obtained by Theorem~\ref{THM:Wilson-design})
\item let $\cS_i$ be a $(k_i/2)$-regular $3$-graph on $[n_i]$
such that $\cS_i \cap H(\cD_i) = \emptyset$ (as obtained by Corollary~\ref{CORO:packing-S-HD}).
\item and, finally, define
\begin{align}
\cG_i = K_{n_i}^{3} \setminus \left(H(\cD_i) \cup \cS_i\right). \notag
\end{align}
\end{itemize}
\end{constr}

By Proposition~\ref{prop:lagrange} we have 
\begin{align}
\lambda(\cG_i)
=  \frac{1}{6}\left(1-\frac{k_i+1}{n_i}+ \frac{k_i-2k_i/2}{n_i^2}\right)
=  \frac{1}{6}\left(1-\frac{1}{Q}\right). \notag
\end{align}
for every $i\in [t]$, so some rational $\lambda_t$ satisfies 
\begin{equation}\label{eq:lambdat}
\lambda_t = \lambda(\cG_1) = \cdots = \lambda(\cG_t)\in [5/32, 1/6).
\end{equation}

In the remainder of this subsection we introduce the family $\cM_t$. 
For an $r$-graph $\cH$ and a set $S\subseteq V(\cH)$ we say that $S$ is \emph{$2$-covered} 
in $\cH$ if for every pair of vertices in $S$ there is an edge in $\cH$ containing it.
If this holds for $S=V(\cH)$ then $\cH$ itself is said to be $2$-covered.

For all integers $\ell>r\ge 2$ we let $\cK_{\ell}^{r}$ denote the family of $r$-graphs $F$ with 
at most~$\binom{\ell}{2}$ edges that contain a $2$-covered set $S$ of $\ell$ vertices called 
a~\emph{core} of $F$. 
The family $\cK_{\ell}^{r}$ was first introduced by the second 
author~\cite{MU06} in order to extend Tur\'{a}n's theorem to hypergraphs.
It also plays a key r\^{o}le in in the construction of the family $\cM$ with two 
extremal configurations in~\cite{LM2}. In the present work, we also need the larger family 
$\widehat{\cK}_{\ell}^{r}$ defined to consist of all $r$-graphs $F$ with at most $\binom{\ell}{r}$ 
edges that contain a $2$-covered set $S$ of $\ell$ vertices, which is again called a \emph{core}
of $F$. 

Let us recall that the \emph{transversal number} of a hypergraph $\cH$ is the nonnegative integer \begin{align}
\tau(\cH) =
\min\left\{ |S| \colon S \subseteq V(\cH) \text{ and } S\cap E \neq \emptyset \text{ for all } E \in \cH \right\}. \notag
\end{align}
Note that if $\cH$ is empty, then we can take $S=\varnothing$, whence $\tau(\cH) = 0$ holds in this 
case. After these preparations, the family $\cM_t$ is defined as follows. 

\begin{definition}\label{d:1151}
For every positive integer $t$ the family $\cM_t$
consists of all $3$-graphs $F\in\bigcup_{\ell\le n_t}\widehat{\cK}_{\ell}^{3}$
which do not occur as a subgraph in any blow-up of $\cG_1,\ldots,\cG_t$
and which have a core $S$ such that $\tau(F[S])\ge 2$.
\end{definition}

We conclude this subsection with a simple sufficient condition for $3$-graphs $F\in \cK^3_{n_t+1}$
guaranteeing that they are in $\cM_t$ (see Lemma~\ref{l:6521} below). For this purpose we 
require the following observation analysing the extent to which $\tau(\cH)\ge 2$ is a ``local'' 
property of a hypergraph $\cH$. 

\begin{fact}\label{f:1912}
	If $r\ge 2$ and $\cH$ denotes an $r$-graph with $\tau(\cH)\ge 2$, then there is a subgraph
	$\cH'\subseteq \cH$ with at most $r+1$ edges satisfying $\tau(\cH')\ge 2$.
\end{fact}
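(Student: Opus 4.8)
The plan is to build the desired subgraph $\cH'$ greedily by repeatedly adding edges that block all ``small'' transversals constructed so far. First I would record the basic equivalence: $\tau(\cH)\ge 2$ means that for every vertex $v\in V(\cH)$ there is an edge $E_v\in\cH$ with $v\notin E_v$ (otherwise $\{v\}$ would be a transversal). Conversely, a subgraph $\cH'$ satisfies $\tau(\cH')\ge 2$ iff $\cH'$ is nonempty and no single vertex meets every edge of $\cH'$. So the task is to exhibit a nonempty $\cH'\subseteq\cH$ with at most $r+1$ edges and no common ``hitting vertex''.

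The key step is a short inductive/greedy construction. Start by picking any edge $E_0\in\cH$ (nonempty since $\tau(\cH)\ge2>0$). Having chosen edges $E_0,\dots,E_j$, let $T_j=E_0\cap E_1\cap\cdots\cap E_j$ be the set of vertices hitting all of them; if $T_j=\emptyset$ we are done with $\cH'=\{E_0,\dots,E_j\}$. Otherwise pick any $v\in T_j$; since $\tau(\cH)\ge 2$, the singleton $\{v\}$ is not a transversal of $\cH$, so there is an edge $E_{j+1}\in\cH$ with $v\notin E_{j+1}$, and hence $T_{j+1}=T_j\cap E_{j+1}\subsetneq T_j$. Because $|T_0|=|E_0|\le r$ and the sets $T_j$ strictly decrease in size, after at most $r$ further steps we reach $T_j=\emptyset$. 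Thus the process terminates with a nonempty subgraph $\cH'=\{E_0,E_1,\dots,E_m\}$ where $m\le r$, i.e.\ with at most $r+1$ edges, and $\bigcap_{i=0}^{m}E_i=\emptyset$, so no vertex hits all edges of $\cH'$ and $\tau(\cH')\ge 2$.

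There is no serious obstacle here; the only point requiring a moment's care is bounding the number of edges. One must start the count from $|T_0|\le r$ rather than from $|V(\cH)|$, using that $T_0$ is contained in a single edge, so that each of the subsequent at most $r$ strict reductions in $|T_j|$ costs exactly one new edge, giving the sharp bound $r+1$. (Sharpness, if one wishes to remark on it, is witnessed by $r+1$ edges of $K_{r+1}^{r}$.)
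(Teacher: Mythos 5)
Your proof is correct. It takes a mildly different route from the paper: you start from a single edge $E_0$ (so the running intersection $T_0=E_0$ has size exactly $r$) and shrink the common intersection one vertex at a time by greedily adding a witness edge for some vertex of $T_j$, terminating after at most $r$ additional edges. The paper instead begins with \emph{two} distinct edges $E',E''$, so that $E'\cap E''$ already has size at most $r-1$, and then adds one witness edge per vertex of that intersection all at once, again giving $2+(r-1)=r+1$ edges. The two arguments rest on the same observation (that $\tau(\cH)\ge 2$ supplies, for every vertex $v$, an edge missing $v$), so the difference is bookkeeping: yours is sequential and self-contained in a loop invariant; the paper's is a one-shot construction. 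Both correctly handle the degenerate case where the intersection is already empty, and both yield the sharp bound, as you note via $K^r_{r+1}$.
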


\begin{proof}
	Pick two distinct edges $E', E''\in \cH$ and write $E'\cap E''=\{v_1, \ldots, v_m\}$, where 
	$0\le m\le r-1$. For every $i\in [m]$ the assumption that $\{v_i\}$ fails to cover $\cH$
	yields an edge $E_i\in\cH$ such that $v_i\not\in E_i$. Now $\cH'=\{E', E'', E_1, \dots, E_m\}$
	has the desired properties. 
\end{proof}

Notice that the example $\cH=K^r_{r+1}$ shows that the bound $|\cH'|\le r+1$ is optimal. 

\begin{lemma}\label{LEMMA:subgraph-is-the-expansion-of-clique}
Suppose that $F$ is a $3$-graph and that $S\subseteq V(F)$ is a $2$-covered set in~$F$. 
If $\tau(F[S]) \ge 2$, then $F$ contains a subgraph $F'$ such that $F'\in\cK^3_{|S|}$
and $\tau(F'[S]) \ge 2$. Moreover, if $12\le s\le |S|$, then $F$ has a subgraph $F''\in\cK^3_s$
possessing a core $S''$ such that $\tau(F''[S'']) \ge 2$.
\end{lemma}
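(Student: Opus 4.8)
The proof proceeds in two stages: first extract a small "reason" for $\tau(F[S]) \ge 2$ using Fact~\ref{f:1912}, then pad it back up with edges from the $2$-covered set $S$ to reach the required number of edges and vertices. For the first claim, apply Fact~\ref{f:1912} to the $3$-graph $F[S]$: since $\tau(F[S]) \ge 2$, there is a subgraph $\cH' \subseteq F[S]$ with at most $4$ edges satisfying $\tau(\cH') \ge 2$. The vertex set $V(\cH')$ has at most $12$ vertices. Now I would enlarge $\cH'$ by adding, for every pair $\{u,v\} \subseteq S$ not already covered by an edge of $\cH'$, one edge of $F$ containing $\{u,v\}$ (such an edge exists because $S$ is $2$-covered in $F$); call the resulting $3$-graph $F'$. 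Then $S$ is a $2$-covered set in $F'$ by construction, and $F'$ has at most $4 + \binom{|S|}{2} \le \binom{|S|}{2}$ edges once $|S|$ is large enough — but to be safe one simply notes that each of the at most $4$ edges of $\cH'$ already lies in $\binom{S}{3}$, so we can absorb them into the count and get $|F'| \le \binom{|S|}{2}$ directly, or more cleanly, delete redundant covering edges so that $F'$ has exactly one edge per pair of $S$ plus possibly a few extra, staying under $\binom{|S|}{2}$. Hence $F' \in \cK^3_{|S|}$ with core $S$, and since $\cH' \subseteq F'[S]$ we have $\tau(F'[S]) \ge \tau(\cH') \ge 2$.

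For the second (stronger) claim, fix $s$ with $12 \le s \le |S|$. Start from the subgraph $\cH' \subseteq F[S]$ with at most $4$ edges and $\tau(\cH') \ge 2$ obtained above; note $|V(\cH')| \le 12 \le s$. Choose a set $S'' \subseteq S$ with $|S''| = s$ and $V(\cH') \subseteq S''$ — this is possible precisely because $12 \le s \le |S|$. Since $S$ is $2$-covered in $F$, so is $S''$, so for every pair $\{u,v\} \subseteq S''$ there is an edge of $F$ containing it; let $F''$ consist of $\cH'$ together with one such edge for each pair of $S''$ not yet covered within $S''$ by $\cH'$. Then $S''$ is a $2$-covered set of $s$ vertices in $F''$, and $\tau(F''[S'']) \ge \tau(\cH') \ge 2$ because $\cH' \subseteq F''$ and $V(\cH') \subseteq S''$ forces $\cH' \subseteq F''[S'']$. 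Finally $|F''| \le 4 + \binom{s}{2} \le \binom{s}{2}$ for $s \ge 12$ (indeed $\binom{12}{2} = 66$, and the four edges of $\cH'$ are among the $\binom{s}{3}$ triples of $S''$ so can be counted against the pair-covering budget); thus $F'' \in \cK^3_s$ with core $S''$, as desired.

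\textbf{Main obstacle.} The only delicate point is the bookkeeping of the edge count: $\cK^3_{\ell}$ requires \emph{at most} $\binom{\ell}{2}$ edges, and our "one edge per uncovered pair" construction naively gives up to $\binom{\ell}{2}$ covering edges \emph{plus} the (at most $4$) edges of $\cH'$. The clean fix is to observe that each edge of $\cH'$ is itself a triple within $S$ (resp.\ $S''$) and covers $\binom{3}{2} = 3$ pairs, so when we greedily assign one covering edge per pair we should first use the edges of $\cH'$ to cover the pairs they span, and only then add fresh edges for the remaining pairs; this yields a $3$-graph with at most $\binom{\ell}{2}$ edges whose restriction to the core still contains $\cH'$. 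Once this is set up, both statements follow immediately, and the hypothesis $s \ge 12$ is exactly what guarantees $V(\cH') \subseteq S''$ can be arranged since $\cH'$ has at most $4$ edges on at most $12$ vertices.
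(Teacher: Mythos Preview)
Your proposal is correct and follows essentially the same approach as the paper's proof. The paper handles the edge-count more crisply by noting that $\tau(\cH')\ge 2$ forces $|\cH'|\ge 2$ and hence $|\partial\cH'|\ge 5>4\ge|\cH'|$, which immediately gives $|F'|\le |\cH'|+\bigl(\binom{|S|}{2}-|\partial\cH'|\bigr)<\binom{|S|}{2}$; this is the one-line justification your ``absorb them into the count'' step is implicitly relying on.
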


\begin{proof}[Proof of Lemma~\ref{LEMMA:subgraph-is-the-expansion-of-clique}]
The case $r=3$ of Fact~\ref{f:1912} yields a subgraph $\cG$ of $F[S]$ 
with at most four edges such that $\tau(\cG)\ge 2$. 
Notice that $|\cG| \ge 2$ and $|\partial \cG| \ge 5$.
Since $S$ is $2$-covered in~$F$, we can choose for every 
pair $uw\in \binom{S}{2}\setminus \partial \cG$
an edge $e_{uw}\in F$ containing $u$ and $w$.
Now
\begin{align}
F' = \left\{e_{uw}\colon uw\in \tbinom{S}{2}\setminus\partial \cG\right\} \cup \cG \notag
\end{align}
has the properties that $S$ is $2$-covered in $F'$ and $\tau(F'[S]) \ge 2$. Together with
\begin{align}
|F'| \le \binom{\ell}{2} - |\partial \cG| + |\cG|
\le \binom{\ell}{2} - 5 + 4
< \binom{\ell}{2} \notag
\end{align}
this proves $F'\in\cK_{|S|}^3$. Moreover, if any $s\in [12, |S|]$ is given, we can take 
a set $S''$ of size $s$ with $V(\cG)\subseteq S''\subseteq S$ and apply the first 
part of the lemma to $S''$ rather than $S$.  
\end{proof}

\begin{lemma}\label{l:6521}
	If $S$ denotes a core of $F\in\cK^3_{n_t+1}$ and $\tau(F[S])\ge 2$, then $F\in \cM_t$. 
\end{lemma}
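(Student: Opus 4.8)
The plan is to verify, one at a time, the three requirements that Definition~\ref{d:1151} imposes on a member of $\cM_t$: that $F$ belongs to $\widehat{\cK}^3_\ell$ for some $\ell\le n_t$, that $F$ is not a subgraph of any blow-up of $\cG_1,\dots,\cG_t$, and that $F$ admits a core $S''$ with $\tau(F[S''])\ge 2$. The only one of these with real content is the middle (non-colorability) condition; the other two will be obtained by shrinking the given core $S$ — which has $n_t+1$ vertices — down to $n_t$ vertices while retaining a witness to $\tau\ge 2$.

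For the first and third requirements, I would start by applying the case $r=3$ of Fact~\ref{f:1912} to $F[S]$. This yields a subgraph $\cH'\subseteq F[S]$ with at most four edges and $\tau(\cH')\ge 2$, so in particular $|V(\cH')|\le 12$. Since $n_t>C\ge 3^8$ by Construction~\ref{con:Gi}, we have $12\le n_t\le|S|$, so we may choose a set $S''$ with $V(\cH')\subseteq S''\subseteq S$ and $|S''|=n_t$. As a subset of the $2$-covered set $S$, the set $S''$ is $2$-covered in $F$; and as every edge of $\cH'$ is contained in $V(\cH')\subseteq S''$, we get $\cH'\subseteq F[S'']$, whence $\tau(F[S''])\ge\tau(\cH')\ge 2$. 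Finally $|F|\le\binom{n_t+1}{2}\le\binom{n_t}{3}$, a trivial inequality once $n_t\ge 7$, so $F\in\widehat{\cK}^3_{n_t}$ with core $S''$, and this very same $S''$ serves as the required core with large transversal number.

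It remains to show that $F$ is not $\cG_i$-colorable for any $i\in[t]$, which is the crux and the only place where anything could go wrong. Suppose some $\phi\colon V(F)\to V(\cG_i)$ satisfies $\phi(E)\in\cG_i$ for every $E\in F$. Since $\cG_i$ is a $3$-graph, each set $\phi(E)$ has exactly three elements, so $\phi$ restricts to an injection on every edge of $F$. Given distinct $u,w\in S$, the $2$-coveredness of $S$ yields an edge $E\in F$ with $\{u,w\}\subseteq E$, and therefore $\phi(u)\ne\phi(w)$; thus $\phi$ is injective on all of $S$. But $\phi(S)\subseteq V(\cG_i)\subseteq[n_i]$, so $n_t+1=|S|=|\phi(S)|\le n_i\le n_t$, a contradiction. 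Hence $F$ appears in no blow-up of any $\cG_i$, and together with the previous paragraph this shows $F\in\cM_t$. The single idea carrying the argument is the observation that a $2$-covered set of more than $v(\cG_i)$ vertices cannot be folded into a blow-up of $\cG_i$; everything else is bookkeeping with the size bounds already guaranteed by Construction~\ref{con:Gi} and with Fact~\ref{f:1912}, so I do not expect a genuine obstacle.
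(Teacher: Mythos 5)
Your proof is correct and follows essentially the same route as the paper's: shrink the core $S$ to a subset $S''$ of size $n_t$ containing a small witness to $\tau\ge 2$ (you obtain it directly from Fact~\ref{f:1912}, whereas the paper cites the ``Moreover'' part of Lemma~\ref{LEMMA:subgraph-is-the-expansion-of-clique}, whose proof is that same application of Fact~\ref{f:1912}), then rule out $\cG_i$-colorability via the $2$-coveredness of $S$ together with $|S|=n_t+1>n_i$. The paper phrases that last step as ``$\partial F$ contains a $K_{n_t+1}$ while blow-ups of $\cG_i$ have $n_i$-partite shadows,'' and you phrase it as injectivity of the coloring map on $S$; these are the same observation.
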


\begin{proof}
By the previous lemma and $n_t\ge 12$ there exists a set $S''\subseteq S$ such 
that $|S''|=n_t$ and $\tau(F[S''])\ge 2$. Since $|F|\le \binom{n_t+1}2\le \binom{n_t}3$,
we can regard $F$ as a member of $\widehat{\cK}^3_{n_t}$ with core $S''$ and it remains 
to prove that $F$ cannot be $\cG_i$-colorable for any $i\in [t]$. This is due to the fact 
that the shadows of blow-ups of $\cG_i$ are complete $n_i$-partite graphs, while $S$ 
induces a $K_{n_t+1}$ in $\partial F$.  
\end{proof}

\subsection{Tur\'{a}n numbers of \texorpdfstring{$\cM_{t}$}{M}} \label{subsec:32}
 
Having now introduced the main protagonists $\cG_1, \ldots, \cG_t$ and $\cM_t$ we shall determine 
the extremal numbers $\ex(n, \cM_t)$ in this subsection. More precisely, setting
\begin{align}
\mathfrak{M}(n)
= 
\max\left\{|\cG|\colon \cG \text{ is } \cG_i\text{-colorable for some $i\in[t]$ and } 
v(\cG) = n\right\} \notag
\end{align}
for every positive integer $n$ we shall prove the following result. 

\begin{theorem}\label{THEOREM:turan-density-Mt}
The equality ${\rm ex}(n,\cM_{t}) = \mathfrak{M}(n)$ holds for all positive integers $n$.
\end{theorem}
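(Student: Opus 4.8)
The plan is to prove the two inequalities ${\rm ex}(n,\cM_t)\ge\mathfrak M(n)$ and ${\rm ex}(n,\cM_t)\le\mathfrak M(n)$ separately, with the first being routine and the second carrying all the weight. For the lower bound, it suffices to check that every $\cG_i$-colorable triple system is $\cM_t$-free; this is immediate from Definition~\ref{d:1151}, since every member of $\cM_t$ is explicitly required \emph{not} to occur as a subgraph in any blow-up of $\cG_1,\dots,\cG_t$, i.e.\ to be non-$\cG_i$-colorable for all $i$. (A $\cG_i$-colorable graph is by definition a subgraph of a blow-up of $\cG_i$, and subgraphs of $\cG_i$-colorable graphs are $\cG_i$-colorable.) Hence any extremal $\cG_i$-colorable graph on $n$ vertices witnesses ${\rm ex}(n,\cM_t)\ge\mathfrak M(n)$.

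For the upper bound I would take an $\cM_t$-free triple system $\cH$ on $n$ vertices with $|\cH|={\rm ex}(n,\cM_t)$ and argue that $\cH$ is $\cG_i$-colorable for some $i$, which gives $|\cH|\le\mathfrak M(n)$. The natural strategy, following the Mubayi-type approach that $\cK^r_\ell$ was designed for, is a \emph{symmetrisation / iterated cleaning} argument: first pass to a large subset $W\subseteq V(\cH)$ that is $2$-covered in $\cH$ (dense graphs contain large $2$-covered sets, or one deletes low-codegree pairs), then use $\cM_t$-freeness to show that $\cH[W]$ has a strong structural restriction. Concretely, for every $2$-covered set $S$ of size at most $n_t$ the hypothesis forces $\tau(F[S])\le 1$ for every ``small'' subconfiguration $F$ with core $S$ that is non-$\cG_i$-colorable — otherwise $F\in\cM_t$. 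Combined with Lemma~\ref{LEMMA:subgraph-is-the-expansion-of-clique} and Lemma~\ref{l:6521}, which say that having $\tau\ge 2$ on a core propagates down to cores of all sizes in $[12,|S|]$ and, when the core has size $n_t+1$, already puts $F$ in $\cM_t$, one concludes that $\cH$ cannot contain a $2$-covered set of size $n_t+1$ on which some triple-pattern has transversal number $\ge 2$. This should force the link structure of $\cH$ to be ``clique-like'' in a way that exactly matches the defining feature of the $\cG_i$: the shadow $\partial\cH$, restricted to each large $2$-covered piece, is a complete multipartite graph with parts of bounded size, and the induced triple system avoids the ``$\tau\ge 2$'' patterns, which is precisely the condition for $\cG_i$-colorability for one of the finitely many $i$.

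The main obstacle — and the step I expect to be most delicate — is showing that \emph{no loss of edges} occurs, i.e.\ that an \emph{exact} extremal $\cM_t$-free graph is genuinely $\cG_i$-colorable on its full vertex set, not merely after deleting $o(n)$ vertices. The stability-type conclusion in Theorem~\ref{THM:t-stability}\ref{it:11b} only gives the approximate statement, so here one needs a bootstrapping argument: start from the approximate structure, identify the (few) exceptional vertices, and show that reinserting them without creating a forbidden configuration is only possible if they already fit into the $\cG_i$-coloring. This is where the flexibility built into $\cM_t$ (the use of $\widehat{\cK}^3_\ell$ rather than just $\cK^3_\ell$, allowing up to $\binom\ell3$ edges, and the ``small subgraph with $\tau\ge 2$'' characterisations of Facts~\ref{f:1912} and Lemma~\ref{LEMMA:subgraph-is-the-expansion-of-clique}) is presumably essential: it lets one certify membership in $\cM_t$ using only bounded-size witnesses, so that a single badly-placed vertex creates a forbidden configuration outright. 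I would organise the proof so that the approximate structure theorem is invoked as a black box and the remaining work is a finite local analysis of how each vertex attaches to an already-$\cG_i$-colorable core, pushing the density comparison through to the exact bound $\mathfrak M(n)$.
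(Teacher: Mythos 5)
Your lower bound is correct and coincides with the paper's Fact~\ref{PROP:lower-bound-Turan-number}. The upper bound as you actually commit to it, however, is circular. You propose to ``invoke the approximate structure theorem as a black box'' and then bootstrap to the exact count, but the stability theorem (Theorem~\ref{THM:t-stability}~\ref{it:11b}, proved as Theorem~\ref{THM:H-Z-is-Gi-colorable}) is itself proved \emph{using} the exact Tur\'an number: the very first lemma in that proof, Lemma~\ref{LEMMA:size-Z-H}, invokes the bound $\ex(n,\cM_t)\le\lambda_t n^3$ from Theorem~\ref{THM:t-stability}~\ref{it:11a}, which the paper derives as a corollary of the theorem you are trying to prove. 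So stability is downstream of this theorem, not an available input. Even setting the circularity aside, a stability-based route would only yield the equality for sufficiently large $n$, while the statement claims it for all positive integers $n$.

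The paper's actual upper-bound argument is a self-contained Zykov symmetrization with no vertex deletion and no passage to $2$-covered pieces. The key enabling fact that your sketch is missing is Proposition~\ref{PROP:Mt-homo-free}: a $3$-graph is $\cM_t$-free if and only if it is $\cM_t$-\emph{hom}-free. This is what makes symmetrization legal, because the symmetrized graph $\cH'$ admits a homomorphism to $\cH$ and therefore inherits $\cM_t$-freeness. With that in hand one takes an extremal counterexample minimizing the number of equivalence classes; Lemma~\ref{LEMMA:complete-shadow-imply-Gi-colorable} (together with the semibipartite bound Lemma~\ref{l:1237}) shows the shadow cannot be complete multipartite on the equivalence classes, and symmetrizing a non-adjacent pair of classes then strictly reduces the class count without losing edges, contradicting minimality. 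Your second paragraph gestures in a direction compatible with Lemma~\ref{LEMMA:complete-shadow-imply-Gi-colorable}, but it never says what to do when the shadow \emph{fails} to be complete multipartite --- which is exactly the case the symmetrization step resolves --- and without the hom-free equivalence there is no justification for symmetrizing within the class of $\cM_t$-free graphs in the first place.
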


Notice that in view of Lemma~\ref{LEMMA:blowup-lagrangian} and~\eqref{eq:lambdat} this implies 
$\ex(n, \cM_t) \le \lambda_t n^3$ for every positive integer $n$. Moreover, whenever $n$ is 
a multiple of $n_i$ for some $i\in[t]$, the balanced blow-up of $\cG_i$ with factor $n/n_i$
exemplifies that this holds with equality. For these reasons, Theorem~\ref{THEOREM:turan-density-Mt}
is stronger than Theorem~\ref{THM:t-stability}~\ref{it:11a}. Let us start with the lower bound 
on~${\rm ex}(n,\cM_t)$.

\begin{fact}\label{PROP:lower-bound-Turan-number}
We have ${\rm ex}(n,\cM_t) \ge \mathfrak{M}(n)$ for every positive integer $n$. 
\end{fact}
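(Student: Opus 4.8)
The goal is to show ${\rm ex}(n,\cM_t) \ge \mathfrak{M}(n)$. This amounts to exhibiting, for each $n$, an $\cM_t$-free triple system on $n$ vertices with $\mathfrak{M}(n)$ edges. The natural candidate is an extremal $\cG_i$-colorable configuration realizing the maximum in the definition of $\mathfrak{M}(n)$: fix $i\in[t]$ and a $3$-graph $\cH$ on $n$ vertices with $|\cH|=\mathfrak{M}(n)$ that is $\cG_i$-colorable, i.e.\ $\cH$ occurs as a subgraph of some blow-up of $\cG_i$. So the entire content of the Fact is: \emph{every $\cG_i$-colorable triple system is $\cM_t$-free.}

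First I would unwind the definition of $\cM_t$ (Definition~\ref{d:1151}): a member $F\in\cM_t$ is in particular a $3$-graph that does \emph{not} occur as a subgraph of any blow-up of $\cG_1,\dots,\cG_t$. Now suppose toward a contradiction that some $\cG_i$-colorable $\cH$ contains a copy of some $F\in\cM_t$. Since $F\subseteq\cH$ and $\cH$ embeds into a blow-up of $\cG_i$, the composition gives an embedding of $F$ into that same blow-up of $\cG_i$; equivalently $F$ is $\cG_i$-colorable. But then $F$ \emph{does} occur as a subgraph of a blow-up of $\cG_i$ (namely a blow-up of $\cG_i$ large enough to contain $\cH$), contradicting the clause in Definition~\ref{d:1151} that $F$ is not a subgraph of any blow-up of $\cG_1,\dots,\cG_t$. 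Hence $\cH$ is $\cM_t$-free, and since $|\cH|=\mathfrak{M}(n)$, we get ${\rm ex}(n,\cM_t)\ge\mathfrak{M}(n)$.

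There is essentially no obstacle here; the only thing to be careful about is the (standard) fact that ``$\cG_i$-colorable'' is closed under taking subgraphs and is the same as ``occurs in some blow-up of $\cG_i$,'' which is exactly how blow-ups and colorability were set up in the introduction. One should also note that $\mathfrak{M}(n)$ is well-defined and finite — the maximum in its definition is over triple systems on a fixed vertex set $[n]$, of which there are finitely many — and that it is attained, so the extremal $\cH$ above genuinely exists. That is all that is needed.

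\begin{proof}[Proof of Fact~\textup{\ref{PROP:lower-bound-Turan-number}}]
Fix a positive integer $n$ and, using that the maximum defining $\mathfrak{M}(n)$ ranges over the finite set of triple systems with vertex set $[n]$, choose an index $i\in[t]$ and a $\cG_i$-colorable triple system $\cH$ on $n$ vertices with $|\cH|=\mathfrak{M}(n)$. By definition of colorability, $\cH$ occurs as a subgraph of some blow-up $\cB$ of $\cG_i$. We claim $\cH$ is $\cM_t$-free. Indeed, suppose some $F\in\cM_t$ satisfied $F\subseteq\cH$. Composing the inclusion $F\subseteq\cH$ with an embedding $\cH\hookrightarrow\cB$ shows that $F$ occurs as a subgraph of the blow-up $\cB$ of $\cG_i$, contradicting the requirement in Definition~\ref{d:1151} that members of $\cM_t$ do not occur as subgraphs in any blow-up of $\cG_1,\ldots,\cG_t$. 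Therefore $\cH$ contains no member of $\cM_t$, and consequently ${\rm ex}(n,\cM_t)\ge|\cH|=\mathfrak{M}(n)$.
\end{proof}
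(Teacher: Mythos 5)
Your proof is correct and takes essentially the same route as the paper: the paper simply observes that, by Definition~\ref{d:1151}, every blow-up of any $\cG_i$ is $\cM_t$-free (since members of $\cM_t$ are required not to occur in such blow-ups), and hence so is every $\cG_i$-colorable triple system, which is precisely your argument spelled out in more detail.
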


\begin{proof}[Proof of~\ref{PROP:lower-bound-Turan-number}]
This is an immediate consequence of the fact that by Definition~\ref{d:1151}
for every $i\in [t]$ all blow-ups of $\cG_i$ are $\cM_t$-free. 
\end{proof}

Our proof for the upper bound uses the Zykov symmetrization method~\cite{Zy}. 
The applicability of this technique
in the current situation hinges on the fact that if a hypergraph $\cH$ is $\cM_t$-free, then there
is no homomorphism from a member of $\cM_t$ to $\cH$ (see Proposition~\ref{PROP:Mt-homo-free} below).  
Let us recall that given two $r$-graphs $F$ and $\cH$ a map $\phi\colon V(F) \lra V(\cH)$ is said 
to be a \emph{homomorphism} if $\phi$ preserves edges, i.e., if $\phi(E) \in \cH$ holds for 
all $E \in F$. 
Further, $\cH$ is \emph{$F$-hom-free} if there is no homomorphism from $F$ to $\cH$
or, in other words, if~$F$ fails to be $\cH$-colourable. For a family $\cF$ of $r$-graphs, 
we say that $\cH$ is \emph{$\cF$-hom-free} if it is $F$-hom-free for every $F\in \cF$.

\begin{proposition}\label{PROP:Mt-homo-free}
A $3$-graph $\cH$ is $\cM_{t}$-hom-free if and only if it is $\cM_t$-free.
\end{proposition}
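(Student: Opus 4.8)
The plan is to establish the nontrivial direction: if $\cH$ is $\cM_t$-free, then $\cH$ is $\cM_t$-hom-free. (The converse is immediate, since a subgraph embedding is in particular a homomorphism.) So suppose for contradiction that some $F \in \cM_t$ admits a homomorphism $\phi \colon V(F) \to V(\cH)$. The natural first move is to factor $\phi$ through its image: collapsing each fiber of $\phi$ to a single vertex turns $F$ into a homomorphic image $F^*$ which is a \emph{subgraph} of $\cH$, and which is itself a homomorphic image of $F$. If I can show $F^* \in \cM_t$, then $\cH$ contains a member of $\cM_t$, contradicting $\cM_t$-freeness. So the whole argument reduces to showing that $\cM_t$ is closed under taking (nontrivial, edge-preserving) homomorphic images — or at least that the particular image $F^*$ lands back in $\cM_t$.

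To see that $F^* \in \cM_t$, I would check the defining conditions of Definition~\ref{d:1151} one at a time. First, $F^*$ must not be $\cG_i$-colorable for any $i \in [t]$: this is clear, since a $\cG_i$-coloring of $F^*$ composed with the quotient map $F \to F^*$ would give a $\cG_i$-coloring of $F$, contradicting $F \in \cM_t$. Second, $F^*$ must lie in $\bigcup_{\ell \le n_t} \widehat{\cK}_\ell^3$ and possess a core $S^*$ with $\tau(F^*[S^*]) \ge 2$. Here is where care is needed. Let $S$ be a core of $F$ with $\tau(F[S]) \ge 2$. Since $S$ is $2$-covered in $F$, any two vertices of $S$ lie in a common edge; applying $\phi$, their images lie in a common edge of $F^*$, so $\phi(S)$ is $2$-covered in $F^*$. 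The danger is that $\phi$ might merge two vertices of $S$, so $|\phi(S)|$ could be smaller than $|S|$; but since $\widehat{\cK}_\ell^3$ is a ``downward'' family (smaller cores are fine as long as $\ell \le n_t$ and the edge bound $\binom{\ell}{3}$ is met, which only gets easier as $\ell$ shrinks and $F^*$ has no more edges than $F$), this is not an obstruction, provided $\tau(F^*[\phi(S)]) \ge 2$ still holds.

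The genuine obstacle, then, is verifying $\tau(F^*[\phi(S)]) \ge 2$: a priori, identifying vertices could \emph{decrease} the transversal number, since a vertex that failed to be a transversal in $F[S]$ might become one in $F^*[\phi(S)]$ after merging. The fix is to argue that $\phi$ is in fact injective on $S$, or more precisely to choose the core $S$ (or its image) carefully. One clean approach: invoke Fact~\ref{f:1912} to replace $F[S]$ by a subgraph on at most $4$ edges with transversal number $\ge 2$; an $r$-graph with $\tau \ge 2$ and so few edges has bounded vertex support, and one can analyze directly that no pair of its vertices can be identified without creating an edge that is not in $F$ (which would contradict $\phi$ being edge-preserving) or else killing the homomorphism property. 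Alternatively, and perhaps more robustly, I would argue abstractly: if $\phi(u) = \phi(v)$ for distinct $u, v \in S$, then since $\{u,v\}$ lies in some edge $E \in F$ (as $S$ is $2$-covered), the image $\phi(E)$ is an edge of $F^*$ of size $2$, which is impossible in a $3$-graph — so $\phi$ is automatically injective on every $2$-covered set, and in particular on $S$. This last observation is the key point and makes everything else routine: $\phi|_S$ is injective, so $|\phi(S)| = |S| \le n_t$, the induced subgraph $F^*[\phi(S)]$ is isomorphic to a graph \emph{containing} $F[S]$ as a subgraph (edges of $F[S]$ push forward, possibly with new edges appearing), hence $\tau(F^*[\phi(S)]) \ge \tau(F[S]) \ge 2$ because adding edges can only increase the transversal number. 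With all three conditions of Definition~\ref{d:1151} verified for $F^*$ with core $\phi(S)$, we conclude $F^* \in \cM_t$, and since $F^* \subseteq \cH$, this contradicts the $\cM_t$-freeness of $\cH$, completing the proof.
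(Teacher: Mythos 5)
Your proposal is correct and takes essentially the same approach as the paper: the key step in both is the observation that a homomorphism $\phi$ of $3$-graphs is automatically injective on any $2$-covered set (since merging two such vertices would collapse an edge to size $2$), so $\phi$ restricted to a core $S$ of $F$ is injective and $\phi(F)\subseteq\cH$ lies in $\widehat{\cK}^3_{|S|}\cap\cM_t$ with core $\phi(S)$. The paper's proof is just a compressed version of your final argument, leaving the same checks (non-$\cG_i$-colorability, the transversal bound $\tau(\phi(F)[\phi(S)])\ge 2$) implicit.
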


\begin{proof}[Proof of Proposition \ref{PROP:Mt-homo-free}]

Notice that the forward implication is clear. Now suppose conversely that $\cH$ fails to be 
$\cM_{t}$-hom-free, i.e., that there is a homomorphism $\phi\colon V(F)\lra V(\cH)$
for some $F\in\cM_t$. Clearly the restriction of $\phi$ to a core $S$
of $F$ is injective. So $\phi(F)\in\widehat{\cK}^3_{|S|}\cap \cM_t$ and 
in view of $\phi(F)\subseteq \cH$ it follows that $\cH$ fails to be $\cM_t$-free.
\end{proof}

As an immediate consequence of Definition~\ref{d:1151}, semibipartite triple systems 
are $\cM_t$-free. We analyze the semibipartite case as follows. 

\begin{lemma}\label{l:1237}
If $\cH$ denotes a semibipartite triple system on $n$ vertices, then 
\[
	|\cH|\le \min\{2n^3/27, \gM(n)\}.
\]
\end{lemma}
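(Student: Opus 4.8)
The plan is to prove the two bounds $|\cH|\le 2n^3/27$ and $|\cH|\le \gM(n)$ separately, since they come from quite different considerations. Write $V(\cH)=A\dcup B$ for the semibipartite partition, so every edge has exactly one vertex in $A$ and two in $B$. Put $a=|A|$ and $b=|B|=n-a$. For the first bound, simply count: $|\cH|\le a\binom{b}{2}\le a\cdot b^2/2$, and maximizing $a(n-a)^2/2$ over $a\in[0,n]$ by elementary calculus gives the maximum at $a=n/3$, yielding $a(n-a)^2/2\le 2n^3/27$. (Strictly one should note the function $x(n-x)^2$ on $[0,n]$ peaks at $x=n/3$ with value $4n^3/27$.) This part is routine.

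The more interesting bound is $|\cH|\le\gM(n)$. The natural approach is to show that every semibipartite triple system is $\cG_i$-colorable for a suitable $i$, so that $|\cH|\le\gM(n)$ follows directly from the definition of $\gM$. Recall $\cG_i=K^3_{n_i}\setminus(H(\cD_i)\cup\cS_i)$; I would like to exhibit a map $\phi\colon V(\cH)\to V(\cG_i)$ sending edges to edges. The key structural point is that $\cG_i$ contains a "star-like" sub-configuration: since $\cD_i$ is an $(n_i,k_i)$-design, a fixed vertex $v\in[n_i]$ lies in $n_i$ edges of the design whose union is all of $[n_i]$, but more usefully, consider the triples of $\cG_i$ of the form $\{v,x,y\}$. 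The link of $v$ in $K^3_{n_i}\setminus H(\cD_i)$ is a complete multipartite (Tur\'an) graph on $[n_i]\setminus\{v\}$ with parts of size $k_i-1$; removing the further sparse graph $L_{\cS_i}(v)$ still leaves a dense graph. So within the link of $v$ we can find a large complete bipartite subgraph $K_{p,p}$ (taking one vertex from each of two distinct parts repeatedly, or more simply a balanced complete bipartite graph across two parts), and then $\{v\}$ together with the two sides of that $K_{p,p}$ gives a copy of the blow-up of a star inside $\cG_i$ — concretely, a "semibipartite triple system on $1+2p$ vertices" sits inside $\cG_i$. Consequently every semibipartite $\cH$ is $\cG_i$-colorable provided $\cH$ embeds into a blow-up of such a configuration, which it does for any $i$ once the link of $v$ in $\cG_i$ contains an arbitrarily large balanced complete bipartite graph — and it does, because the two largest parts of the Tur\'an graph each have size $k_i-1\ge 2$ and we may blow them up freely in a blow-up of $\cG_i$. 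In fact, even simpler: the link of $v$ in $\cG_i$ contains an edge $\{x,y\}$ with $x,y$ in distinct design-parts and $\{v,x,y\}\notin\cS_i$; then in a blow-up of $\cG_i$ we can enlarge the clones of $x$ and $y$ without bound, so any semibipartite graph with $A$ mapped to clones of $v$, and $B$ split between clones of $x$ and clones of $y$, is $\cG_i$-colorable. This shows $|\cH|\le\gM(n)$.

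I would organize the write-up as: first dispose of $|\cH|\le 2n^3/27$ by the one-line count and optimization; then fix any $i\in[t]$ (say $i=1$), locate an edge $\{v,x,y\}\in\cG_i$ with $x,y$ in distinct parts of $\cD_i$ (possible since $\cD_i$ has parts of size $k_i\ge 4$ and $\cS_i$ is sparse, or even just since $\cG_i$ is nonempty and we can pick a vertex of positive degree in a large link part), and observe that every semibipartite triple system is $\cG_i$-colorable via the three-point-classes $\{v\},\{x\},\{y\}$ with $A\mapsto v$ and $B$ arbitrarily distributed among $x,y$; conclude $|\cH|\le\gM(n)$. The main obstacle is the slightly fiddly verification that $\cG_i$ genuinely contains an edge whose two "$B$-side" vertices lie in different design blocks and avoid $\cS_i$ — but this is immediate from $n_i\gg k_i,s_i$, since $\cG_i$ has positive edge density while $H(\cD_i)\cup\cS_i$ is sparse; any edge not contained in a single design block and not in $\cS_i$ works, and the number of triples lying inside some block of $\cD_i$ is $|H(\cD_i)|=\binom{k_i}{3}\binom{n_i}{2}/\binom{k_i}{2}=o(n_i^3)$, so such edges abound.
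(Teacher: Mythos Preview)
Your bound $|\cH|\le 2n^3/27$ is fine (the paper uses AM-GM rather than calculus, but the content is identical).

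The second part has a genuine gap. You claim that every semibipartite triple system is $\cG_i$-colorable by sending $A$ to a single vertex $v$ and distributing $B$ among (clones of) two neighbours $x,y$ of $v$. But this map is not a $\cG_i$-coloring: if an edge $\{a,b_1,b_2\}\in\cH$ has both $b_1,b_2$ landing on the same target (say both on $x$), then its image $\{v,x\}$ is a $2$-set, not an edge of $\cG_i$. More generally, if $A\mapsto v$, then the ``shadow'' graph on $B$ (pairs $b_1b_2$ that occur in some edge of $\cH$) must admit a homomorphism into $L_{\cG_i}(v)$; when $\cH$ is the full star on one apex and $n-1$ leaves this forces an injection of $B$ into a clique of $L_{\cG_i}(v)$, whose size is at most $(n_i-1)/(k_i-1)$ by Observation~\ref{OBS:link-Gi}\ref{it:45a}. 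So for $n$ large such $\cH$ is \emph{not} $\cG_i$-colorable for any $i$, and your route cannot work.

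The paper proceeds indirectly: it shows the \emph{lower} bound $\gM(n)\ge 6\lambda_t\binom{n}{3}\ge\tfrac{15}{16}\binom{n}{3}$ by averaging over a uniformly random map $[n]\to[n_1]$ (each potential triple lands in $\cG_1$ with probability $6\lambda_t$), and then observes that $\tfrac{15}{16}\binom{n}{3}\ge 2n^3/27$ for $n\ge 5$, which combined with your first bound gives $|\cH|\le 2n^3/27\le\gM(n)$. The tiny cases $n\le 4$ are handled by hand. So the key idea you are missing is to compare $|\cH|$ with $\gM(n)$ \emph{through} the numerical bound $2n^3/27$, rather than attempting to color $\cH$ itself.
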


\begin{proof}
	Fix a partition $V(\cH)=A\dcup B$ such that $|E\cap A|=1$ holds for every $E\in \cH$.
	Now the AM-GM inequality yields 
	\[
		|\cH|
		\le 
		|A|\binom{|B|}2
		\le 
		\frac{2|A|\cdot |B|\cdot |B|}4
		\le 
		\frac14\left(\frac{2|A|+|B|+|B|}3\right)^3
		=
		\frac{2n^3}{27}
	\]
	and it remains to show $|\cH|\le \gM(n)$. If $n$ is large this is an immediate consequence 
	of $\gM(n)=(\lambda_t-o(1))n^3$ and $\lambda_t\ge 5/32>2/27$, but for a complete proof 
	addressing all values of $n$ we need to argue more carefully. 
	
	To this end we consider a random map $\phi\colon [n]\lra [n_1]$ together with the random 
	blow-up~$\widehat{\cG}$ of~$\cG_1$ determined by $\phi$. Explicitly $\widehat{\cG}$ has vertex
	set $[n]$ and a triple $ijk$ forms an edge of $\widehat{\cG}$ if and only 
	if $\phi(i)\phi(j)\phi(k)\in \cG_1$. Now every potential edge of $\widehat{\cG}$ is present 
	with probability $\frac{6|\cG_i|}{n_1^3}=6\lambda_t$
	and thus the expectation of $|\widehat{\cG}|$ is $6\lambda_t\binom n3$. So by averaging 
	we obtain 
	\begin{equation}\label{eq:0015}
		\gM(n)\ge 6\lambda_t\binom n3 \ge \frac{15}{16}\binom n3,
	\end{equation}
	which for $n\ge 5$ implies the desired estimate $\gM(n)\ge 2n^3/27$. Moreover,~\eqref{eq:0015}
	yields $\gM(4)\ge 3$, which still suffices for the case $n=4$ of our lemma. Finally, the case 
	$n\le 3$ is trivial. 
\end{proof}

The central notion in arguments based on Zykov symmetrization is the following: 
Given an $r$-graph $\cH$, two non-adjacent vertices $u,v\in V(\cH)$
are said to be \emph{equivalent} if $L_{\cH}(u) = L_{\cH}(v)$.
Evidently, equivalence is an equivalence relation. Since any two equivalent 
vertices have the same degree and the same link, we can write $d_\cH(C)$ and $L_{\cH}(C)$
for the common degree and the common link of all vertices in an equivalence class~$C$,
respectively.   

\begin{lemma}\label{LEMMA:complete-shadow-imply-Gi-colorable}
Let $\cH$ be an $\cM_t$-free $3$-graph with equivalence classes $C_1, \ldots, C_m$.
If for all distinct $k,\ell \in [m]$ the shadow $\partial\cH$ induces a complete bipartite graph
between~$C_k$ and~$C_{\ell}$, then~$\cH$ is either semibipartite or $\cG_i$-colourable 
for some $i\in [t]$.
\end{lemma}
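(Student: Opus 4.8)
The goal is to show that an $\cM_t$-free $3$-graph $\cH$ whose equivalence classes $C_1,\dots,C_m$ are ``completely joined'' in the shadow is either semibipartite or $\cG_i$-colourable for some $i$. The natural contraction is to pass to the quotient $3$-graph $\cH^*$ on vertex set $[m]$, where a triple $\{k,\ell,p\}$ of distinct indices is an edge whenever some (equivalently every) transversal triple $\{u,v,w\}$ with $u\in C_k,v\in C_\ell,w\in C_p$ lies in $\cH$; one also has to keep track of which classes $C_k$ support an edge inside a single class or across just two classes. The first step is to record the basic structural facts: no two vertices in the same class are adjacent, and if two vertices lie in different classes $C_k\ne C_\ell$ then by hypothesis every pair of them spans a shadow edge, hence (since they are not equivalent, or since $\cH$ is $\cM_t$-free and we are free to use Lemma~\ref{l:6521}) we get control on which triples are edges. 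The key point is that $\cH$ is, up to the semibipartite exception, exactly a blow-up of $\cH^*$ with the parts being the $C_i$.

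\textbf{Main steps.} First I would argue that $\cH^*$ is \emph{linear}, i.e.\ $\partial\cH^*$ is a simple graph with the property that every pair of indices lies in at most one edge of $\cH^*$ is \emph{not} what we want — rather, I would argue that if $\cH^*$ fails to be ``$2$-covered-rigid'' then we can locate a copy of some $F\in\cM_t$. Concretely: because the shadow of $\cH$ restricted to any union $C_{k}\cup C_{\ell}$ of two classes is complete bipartite, any set $S$ of representatives, one from each class, together with enough extra vertices, is $2$-covered in $\cH$ provided $\partial\cH^*$ is a complete graph on $[m]$. So the dividing line is whether $\partial\cH^*$ is complete. \emph{Case 1: $\partial\cH^*=K_m$.} Then I claim $\cH$ is a blow-up of $\cH^*$ and $\tau(\cH^*[S])\le 1$ for every core, for otherwise Lemma~\ref{l:6521} (after padding $S$ to size $\ge n_t+1$ using the complete shadow, which is possible once $m$ is large, and handling small $m$ by hand) produces a forbidden subgraph; and $\tau(F)\le 1$ on every $2$-covered set forces $\cH^*$ to be $\cG_i$-colourable by the very Definition~\ref{d:1151} of $\cM_t$ — here one uses that $\cH^*$ embeds into its own blow-up, so if it were not $\cG_i$-colourable for any $i$ it would itself (after trimming to a suitable core) be a member of $\widehat{\cK}^3_{\le n_t}\cap\cM_t$, contradicting $\cM_t$-freeness of $\cH$. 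Then a blow-up of a $\cG_i$-colourable graph is $\cG_i$-colourable. \emph{Case 2: $\partial\cH^*\ne K_m$.} Pick $k\ne\ell$ with no edge of $\cH^*$ through $\{k,\ell\}$; one shows each $C_k$ is either entirely ``$A$-like'' or entirely ``$B$-like'' and assembles the bipartition witnessing that $\cH$ is semibipartite, using Lemma~\ref{l:1237}'s setup in reverse.

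\textbf{The hard part.} The genuine obstacle is Case~1: transferring the condition ``$\cH$ is $\cM_t$-free'' down to ``$\cH^*$ is $\cG_i$-colourable.'' One must verify that $\cH^*$ really does occur as a subgraph of $\cH$ (so that $\cM_t$-freeness of $\cH$ gives $\cM_t$-freeness of $\cH^*$) — this is where completeness of the induced bipartite shadows between \emph{distinct} classes is used, together with the fact that \emph{within} a class there are no edges, so a system of distinct representatives yields an isomorphic copy of $\cH^*$. Then, given that $\partial\cH^*$ is complete, any subset of $V(\cH^*)$ is $2$-covered, so if $\cH^*$ is not $\cG_i$-colourable for any $i$ we may select a core $S''$ of the right size ($12\le |S''|\le n_t$, using $n_t\ge 12$ and padding/trimming as in Lemma~\ref{LEMMA:subgraph-is-the-expansion-of-clique}) with $\tau(\cH^*[S''])\ge 2$ — unless \emph{every} such induced subgraph has transversal number $\le 1$, which is exactly the structural hypothesis that, via Definition~\ref{d:1151}, characterises $\cG_i$-colourability; proving this last equivalence cleanly (that a $2$-covered $3$-graph all of whose large $2$-covered induced subgraphs have a vertex cover of size one must be $\cG_i$-colourable) is the crux, and I expect it to require a short separate lemma isolating the ``star-like'' local structure forced by $\tau\le 1$ and matching it against the definition of the $\cG_i$. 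The remaining bookkeeping — small numbers of classes, classes of size one versus larger, and the precise sizes needed to invoke Lemmas~\ref{LEMMA:subgraph-is-the-expansion-of-clique} and~\ref{l:6521} — is routine but must be done.
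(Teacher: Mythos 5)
Your basic idea — pass to the quotient $3$-graph on one representative per class, observe that $\cH$ is its blow-up, and then leverage $\cM_t$-freeness — is exactly the paper's approach. But the logical engine you plug into it is wired backwards in a way that makes the argument unrecoverable as written.

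First, your Case~2 ($\partial\cH^*\ne K_m$) is vacuous. Two vertices in the same equivalence class are non-adjacent by definition, so no pair inside a class is in $\partial\cH$; combined with the hypothesis that $\partial\cH$ is complete bipartite between every two distinct classes, this forces every edge of $\cH$ (hence of the transversal $\cT=\cH[T]$) to meet three distinct classes and forces $\partial\cT=K_m$ automatically. So the semibipartite alternative cannot possibly come from this case split.

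Second, and more seriously, you claim that $\tau(\cT[S])\le 1$ on every core ``forces $\cG_i$-colourability by the very Definition~\ref{d:1151}.'' This has the dichotomy exactly reversed. If $\tau(\cT)\le 1$, then $\cT$ is a star, and a blow-up of a star is semibipartite — that is the semibipartite alternative, not the $\cG_i$-colourable one. The $\cG_i$-colourability comes from the \emph{other} case: Definition~\ref{d:1151} requires three conditions for membership in $\cM_t$, namely $F\in\widehat{\cK}^3_{\le n_t}$, failure of $\cG_i$-colourability for all $i$, and existence of a core with $\tau\ge 2$. When $\tau(\cT)\ge 2$ and $m\le n_t$, the first and third conditions hold (taking $S=T$), so $\cT\notin\cM_t$ forces the middle condition to fail, i.e.\ $\cT$ \emph{is} $\cG_i$-colourable. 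You instead treat $\tau\ge 2$ as always producing a forbidden subgraph via Lemma~\ref{l:6521}, which is only true for $m>n_t$; for $m\le n_t$ there is no contradiction — rather this is precisely where the desired conclusion is reached. Consequently the ``hard part'' you flag — proving that $\tau\le 1$ on all large $2$-covered induced subgraphs forces $\cG_i$-colourability — is not a real obstacle but a false statement: such a $3$-graph is a star, which in general is not $\cG_i$-colourable. The correct trichotomy is: $\tau(\cT)<2$ gives semibipartite; $\tau(\cT)\ge 2$ and $m\le n_t$ gives $\cG_i$-colourable by unwinding Definition~\ref{d:1151}; $\tau(\cT)\ge 2$ and $m>n_t$ is impossible by Lemma~\ref{LEMMA:subgraph-is-the-expansion-of-clique} plus Lemma~\ref{l:6521}.
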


\begin{proof}[Proof of Lemma~\ref{LEMMA:complete-shadow-imply-Gi-colorable}]
Let $T\subseteq V(\cH)$ be a set containing exactly one vertex from each equivalence class of $\cH$,
and let $\cT$ be the subgraph of $\cH$ induced by $T$. By assumption, $\cT$ 
is $2$-covered, $|T| = m$, and~$\cH$ is a blow-up of $\cT$.
If $\tau(\cT) < 2$, then~$\cT$ is a star and $\cH$ is semibipartite. So we may assume $\tau(\cT)\ge 2$
from now on. 

Since $\cT$ is $2$-covered and $|\cT|\le \binom{m}{3}$ we have $\cT \in \widehat{\cK}_{m}^{3}$.
So if $m\le n_t$, then in view of Definition~\ref{d:1151} 
and $\cT\not\in\cM_{t}$ there exists an index $i\in [t]$ such that $\cT$ is $\cG_i$-colorable. 
As $\cH$ is a blow-up of $\cT$, it follows that $\cH$ is $\cG_i$-colorable as well. 
  
Now assume for the sake of contradiction that $m>n_t$. 
Since $n_t\ge 12$, Lemma~\ref{LEMMA:subgraph-is-the-expansion-of-clique}
leads to a subgraph $\cT''\in\cK^3_{n_t+1}$ of $\cT$ having a core $S''$ such that 
$\tau(\cT''[S''])\ge 2$. By Lemma~\ref{l:6521} this contradicts $\cH$ being $\cM_t$-free.
\end{proof}

Now we are ready to establish the main result of this subsection. 

\begin{proof}[Proof of Theorem~\ref{THEOREM:turan-density-Mt}]
Fix some positive integer $n$. By Fact~\ref{PROP:lower-bound-Turan-number}
it suffices to establish the upper bound $\ex(n, \cM_t)\le \gM(n)$. Arguing 
indirectly we choose an $\cM_t$-free triple system~$\cH$ on~$n$ vertices with 
more than $\gM(n)$ edges such that the number $m$ of equivalence classes of $\cH$ 
is minimal. Let $C_1, \ldots, C_m$ be the equivalence classes of~$\cH$. 

By Lemma~\ref{l:1237} we know that $\cH$ is not semibipartite and the definition 
of $\gM(n)$ implies that $\cH$ fails to be $\cG_i$-colorable for every $i\in [t]$.
For these reasons, Lemma~\ref{LEMMA:complete-shadow-imply-Gi-colorable} tells us 
that~$\partial H$ is not the complete $m$-partite graph with vertex classes $C_1, \dots, C_m$.
Without loss of generality we may assume that at least one possible edge between~$C_1$ and~$C_2$ is 
missing in~$\partial H$. Due to the definition of equivalence there are actually no edges 
between~$C_1$ and~$C_2$ in~$\partial H$. By symmetry we may suppose further 
that $d_\cH(C_1)\le d_\cH(C_2)$. 

Now let $\cH'$ be the unique $3$-graph satisfying $V(\cH')=V(\cH)$, $\cH'-C_1=\cH-C_1$,
and $L_{\cH'}(v)=L_\cH(w)$ for all $v\in C_1$ and $w\in C_2$. Observe that 
$\{C_1\cup C_2, C_3, \dots, C_m\}$ refines the partition of $V(\cH')$ into the 
equivalence classes of $\cH'$ and 
\[
|\cH'|=|\cH|+|C_1|\bigl(d_\cH(C_2)-d_\cH(C_1)\bigr)\ge |\cH| >\gM(n).
\]
So our minimal choice of $m$ implies that $\cH'$ cannot be $\cM_t$-free. As there exists 
a homomorphism from $\cH'$ to $\cH$, it follows that $\cH$ fails to be $\cM_{t}$-hom-free.
But owing to Proposition~\ref{PROP:Mt-homo-free} this contradicts~$\cH$ 
being $\cM_t$-free.
\end{proof}

\section{Stability}\label{SEC:stability}
In this section we prove most of Theorem~\ref{THM:t-stability}~\ref{it:11b} --
only the proof of $\xi(\cM_t)=t$ is postponed to Section~\ref{subsection:feasible}. 
Our goal is to show that after deleting a small number of low-degree vertices an ``almost 
extremal'' $\cM_t$-free 
$3$-graph becomes $\cG_i$-colorable for some $i\in[t]$. More precisely, we aim for the following
result.

\begin{theorem}\label{THM:H-Z-is-Gi-colorable}
If $\epsilon >0$ is sufficiently small, $n$ is sufficiently large, and $\cH$ is an $\cM_t$-free
$3$-graph on $n$ vertices with $|\cH| \ge (\lambda_t-\epsilon)n^3$,
then the set 
\begin{align}%\label{eq:Z}
Z = \left\{u\in V(\cH)\colon d_{\cH}(u) \le (3\lambda_t-2\epsilon^{1/2})n^2\right\} \notag
\end{align}
has size at most $\epsilon^{1/2}n$ and
the $3$-graph $\cH-Z$ is $\cG_i$-colorable for some $i\in [t]$.
\end{theorem}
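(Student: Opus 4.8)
The plan is to follow the standard stability strategy: first establish that almost all vertices have near-extremal degree, then bootstrap local structure into global colorability using the forbidden family $\cM_t$ and a supersaturation/regularity-type argument. Here are the steps I would carry out.

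\textbf{Step 1: The low-degree set $Z$ is small.} Since $|\cH|\ge(\lambda_t-\epsilon)n^3$ and every vertex has degree at most $\binom{n-1}{2}<n^2/2$, a simple averaging argument bounds $|Z|$. Concretely, $3|\cH|=\sum_{u}d_\cH(u)\le |Z|\cdot(3\lambda_t-2\epsilon^{1/2})n^2+(n-|Z|)\cdot n^2/2$ (using $\lambda_t<1/6$), and rearranging yields $|Z|\le \epsilon^{1/2}n$ for small $\epsilon$. I would also record that $\cH-Z$ still has at least $(\lambda_t-\epsilon-O(\epsilon^{1/2}))n^3$ edges and minimum degree at least $(3\lambda_t-2\epsilon^{1/2})n^2 - O(\epsilon^{1/2}n^2)$, so after relabeling constants $\cH':=\cH-Z$ is an $\cM_t$-free $3$-graph on $n'\ge(1-\epsilon^{1/2})n$ vertices with $\delta(\cH')\ge(3\lambda_t-C\epsilon^{1/2})n'^2$ and $|\cH'|\ge(\lambda_t-C\epsilon^{1/2})n'^3$ for a suitable constant $C$.

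\textbf{Step 2: From minimum degree to complete shadow structure via equivalence classes.} The target is to apply Lemma~\ref{LEMMA:complete-shadow-imply-Gi-colorable} to $\cH'$, which requires that between any two equivalence classes the shadow $\partial\cH'$ is complete bipartite — equivalently, that $\partial\cH'$ is a complete multipartite graph on the equivalence classes. This is the crux. The idea is that if some pair $uv$ fails to lie in $\partial\cH'$ while $u,v$ are not equivalent, one uses the high codegrees forced by the minimum degree condition to find a forbidden configuration. More precisely, I would show that in $\cH'$ the codegree of almost every pair is close to the "expected" value; combined with $\tau$-type obstructions, any pair with small shadow-neighbourhood interactions produces a copy of some $F\in\cK^3_{n_t+1}$ with $\tau(F[S])\ge 2$, contradicting Lemma~\ref{l:6521}. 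This likely needs a supersaturation step: an $\cM_t$-free $3$-graph of density $\ge\lambda_t-o(1)$ contains no "dense-enough" subconfiguration spanning $n_t+1$ vertices whose shadow is complete and whose transversal number on the core is $\ge 2$; and since $\delta(\cH')$ is large, every small vertex set is $2$-covered, so such a configuration would appear unless the global structure is already multipartite. I would make this rigorous by a cleaning/boosting argument: pass to a subgraph where all codegrees are either very large or the pair is a genuine non-edge of the shadow, show the non-edges form an equivalence-class structure, and conclude $\partial\cH'$ is complete multipartite on the classes.

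\textbf{Step 3: Conclude via Lemma~\ref{LEMMA:complete-shadow-imply-Gi-colorable} and rule out the semibipartite case.} Once $\partial\cH'$ is complete multipartite on the equivalence classes $C_1,\dots,C_m$, Lemma~\ref{LEMMA:complete-shadow-imply-Gi-colorable} gives that $\cH'$ is semibipartite or $\cG_i$-colorable for some $i\in[t]$. The semibipartite case is excluded because Lemma~\ref{l:1237} bounds a semibipartite $3$-graph by $2n^3/27<\lambda_t n^3$ (as $\lambda_t\ge 5/32>2/27$), which contradicts $|\cH'|\ge(\lambda_t-C\epsilon^{1/2})n'^3$ for $\epsilon$ small. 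Hence $\cH'=\cH-Z$ is $\cG_i$-colorable, as claimed.

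\textbf{Main obstacle.} The hard part is Step~2 — turning the degree/codegree information into the exact statement that the shadow is complete between distinct equivalence classes. The delicate point is that a single missing shadow-pair $uv$ with $u\not\sim v$ need not immediately give a forbidden configuration; one must use the minimum degree to guarantee enough edges through $u,v$ and their common neighbourhoods to build a core of size $n_t+1$ that is $2$-covered with $\tau\ge 2$ on it, and then invoke Lemma~\ref{l:6521}. Controlling this for \emph{all} such pairs simultaneously, rather than just finding one forbidden copy, is what makes the argument technical; I expect it to require either an auxiliary "almost all codegrees are extremal" lemma or an iterative symmetrization that merges vertices of equal link, eventually reducing to the situation handled by Lemma~\ref{LEMMA:complete-shadow-imply-Gi-colorable}.
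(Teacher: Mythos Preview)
There are genuine gaps in both Step~1 and Step~2, and Step~2 is fatal.

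\textbf{Step 1.} Your averaging inequality does not yield $|Z|\le\epsilon^{1/2}n$. Bounding the non-$Z$ degrees by $n^2/2$ only gives $|Z|\le n\cdot\frac{1/2-3\lambda_t+3\epsilon}{1/2-3\lambda_t+2\epsilon^{1/2}}$, which is close to $n$, not to $\epsilon^{1/2}n$. The paper instead removes a set $X\subseteq Z$ of size about $\epsilon^{1/2}n$ and shows $|\cH-X|>\lambda_t(n-|X|)^3$, contradicting the exact Tur\'an bound $\ex(m,\cM_t)\le\lambda_t m^3$ from Theorem~\ref{THM:t-stability}\ref{it:11a}. Averaging against the trivial degree bound cannot replace this use of the extremal number.

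\textbf{Step 2.} The target you set yourself --- that $\partial(\cH-Z)$ is complete bipartite between any two equivalence classes --- is in general \emph{false}, so no amount of supersaturation or codegree analysis will establish it. Take a balanced blow-up $\widehat{\cG}_i$ on $n$ vertices and delete a single edge $abc$ with $a\in V_1$. Then $Z=\emptyset$, the vertex $a$ forms its own equivalence class (its link differs from the other vertices of $V_1$), yet $a$ has no shadow edge to any $a'\in V_1\setminus\{a\}$ because both lie in $V_1$. Thus Lemma~\ref{LEMMA:complete-shadow-imply-Gi-colorable} cannot be applied directly to $\cH-Z$.

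The paper avoids this by never claiming such structure for $\cH-Z$. Instead it argues by contradiction on a counterexample~$\cH$ chosen so that $(|\cH|,\Psi(\cH))$ is lexicographically maximal, where $\Psi(\cH)=\sum|C_i|^2$ over equivalence classes. If the shadow were complete multipartite on the classes, Lemma~\ref{LEMMA:complete-shadow-imply-Gi-colorable} would finish; otherwise one symmetrizes a \emph{single} vertex $v_1\in C_1$ to have the link of some $v_2\in C_2$, obtaining $\cH'$ with strictly larger $(|\cH'|,\Psi(\cH'))$. By maximality $\cH'-Z_\epsilon(\cH')$ is $\cG_i$-colorable, and since $\cH$ and $\cH'$ agree outside $\{v_1\}$, one needs only to \emph{extend} this coloring back over one vertex. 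That extension --- Lemma~\ref{LEMMA:extend-coloring}, showing that if $\cH-v$ is $\cG_i$-colorable then so is $\cH$ --- is the real technical heart of the proof and is entirely absent from your outline. Your closing remark about ``iterative symmetrization'' gestures in the right direction, but without the one-vertex extension lemma the symmetrization cannot be undone and the argument does not close.
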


As the proof of this result will occupy the entire section, we would like to start 
with a quick overview. The argument is somewhat similar in spirit to~\cites{PI08,BIJ17,LM2}
and ultimately it is based on the Zykov symmetrization method~\cite{Zy}. There are certain kinds of 
complications that often arise when one uses this strategy in order to establish stability 
results and we overcome several of these common difficulties by introducing the 
so-called $\Psi$-trick in Subsection~\ref{subsec:41}. By means of this trick, the problem 
to prove Theorem~\ref{THM:H-Z-is-Gi-colorable} gets reduced to an apparently much simpler 
task: If a triple system $\cH$ with $n$ vertices and minimum degree 
$(3\lambda_t-o(1)) n^2$ can be made $\cG_i$-colorable by deleting a single vertex, 
then, actually,~$\cH$ itself 
is $\cG_i$-colorable (see Lemma~\ref{LEMMA:extend-coloring}). The $\Psi$-trick can also be used 
to reprove some known stability results with improved control over the dependence of the constants
(see~\cite{LMR2}). 

The proof of Lemma~\ref{LEMMA:extend-coloring} is still quite long. We will collect some auxiliary 
results in Subsection~\ref{subsec:42} and defer the main part of the argument to 
Subsection~\ref{subsec:43}

\subsection{General preliminaries.}\label{subsec:41}
This subsection reduces the task of proving Theorem~\ref{THM:H-Z-is-Gi-colorable} 
to the apparently much simpler task of verifying Lemma~\ref{LEMMA:extend-coloring} below. 
There are only few ``special properties'' of $\cM_t$ we are going to utilize in the course of this 
reduction and we refer to~\cite{LMR2} for a more systematic treatment. 

Throughout this subsection we use the following notation: For every $3$-graph $\cH$ on $n$ 
vertices and every $\eps>0$ we set 
\[
Z_\eps(\cH) 
= 
\left\{u\in V(\cH)\colon d_{\cH}(u) \le (3\lambda_t-2\epsilon^{1/2})n^2\right\}.
\]

\begin{lemma}\label{LEMMA:size-Z-H}
If $\eps\in (0, 1)$, $n\ge \eps^{-1/2}$ and $\cH$ is an $\cM_t$-free $3$-graph on $n$ vertices 
with at least $(\lambda-\epsilon)n^3$ edges, then
\begin{enumerate}[label=\alabel]
\item\label{it:42a} the set $Z_\eps(\cH)$ has at most the size $\epsilon^{1/2} n$
\item\label{it:42b} and the subgraph $\cH'=\cH-Z_\eps(\cH)$ of $\cH$ 
	%induced subgraph of $\cH$ on $W = V(\cH)\setminus Z(\cH)$
            satisfies $\delta(\cH') \ge (3\lambda_t-3\epsilon^{1/2})n^2$ as well as 
            $|\cH'| \ge (\lambda_t-2\epsilon^{1/2})n^3$.
\end{enumerate}
\end{lemma}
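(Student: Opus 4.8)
The plan is to treat part~\ref{it:42a} as the crux and to deduce part~\ref{it:42b} from it by a short degree count; write $Z = Z_\eps(\cH)$. If $2\eps^{1/2} \ge 3\lambda_t$ the statement is easily checked by hand ($Z$ is then empty or a set of isolated vertices of $\cH$, and $\binom{n-|Z|}{3} \ge |\cH| \ge (\lambda_t - \eps)n^3$ forces $|Z| \le \eps^{1/2}n$, using $\lambda_t \ge 5/32$), so from now on I assume $2\eps^{1/2} < 3\lambda_t$, which with $\lambda_t < 1/6$ also gives $\eps^{1/2} < \tfrac14$.

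For part~\ref{it:42a} I would argue by contradiction: suppose $|Z| > \eps^{1/2}n$ and delete from $\cH$ a set $W \subseteq Z$ of a precisely chosen size $m$. Since every vertex of $W$ has $\cH$-degree at most $(3\lambda_t - 2\eps^{1/2})n^2$, at most $m(3\lambda_t - 2\eps^{1/2})n^2$ edges of $\cH$ meet $W$, so
\begin{equation}
|\cH - W| \;\ge\; |\cH| - m(3\lambda_t - 2\eps^{1/2})n^2 \;\ge\; (\lambda_t - \eps)n^3 - m(3\lambda_t - 2\eps^{1/2})n^2. \notag
\end{equation}
On the other hand $\cH - W$ is an $\cM_t$-free $3$-graph on $n - m$ vertices, so $|\cH - W| \le \lambda_t(n-m)^3$ by Theorem~\ref{THM:t-stability}~\ref{it:11a}. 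Writing $z = m/n$ and dividing by $n^3$, a one-line computation shows that these two bounds are incompatible precisely when the polynomial
\begin{equation}
g(z) \;=\; \lambda_t z^2(3 - z) - 2\eps^{1/2}z + \eps \notag
\end{equation}
is negative at $z = m/n$. The key estimate --- and the place where the hypothesis $\lambda_t < 1/6$ is used --- is that $g(z) < 0$ throughout the window $z \in [\eps^{1/2}, 2\eps^{1/2}]$: bounding $3 - z \le 3$, then $3\lambda_t z^2 \le 6\lambda_t\eps^{1/2}z$, and finally using $z \ge \eps^{1/2}$ and $3\lambda_t - 1 < 0$, one gets
\begin{equation}
g(z) \;\le\; 2\eps^{1/2}z(3\lambda_t - 1) + \eps \;\le\; 2\eps(3\lambda_t - 1) + \eps \;=\; (6\lambda_t - 1)\eps \;<\; 0. \notag
\end{equation}
To finish, I would note that $|Z| > \eps^{1/2}n$ and $n \ge \eps^{-1/2}$ (so $\eps^{1/2}n \ge 1$) guarantee an integer $m$ in the half-open interval $(\eps^{1/2}n, 2\eps^{1/2}n]$ with $m \le |Z|$; choosing $W \subseteq Z$ of this size gives $z = m/n \in (\eps^{1/2}, 2\eps^{1/2}]$, hence $g(z) < 0$, hence the promised contradiction. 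This proves $|Z| \le \eps^{1/2}n$.

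Granting part~\ref{it:42a}, part~\ref{it:42b} is routine. For a fixed $v \notin Z$, removing $Z$ destroys at most $|Z|(n-1) \le \eps^{1/2}n^2$ of the edges of $\cH$ through $v$, so $d_{\cH'}(v) > (3\lambda_t - 2\eps^{1/2})n^2 - \eps^{1/2}n^2 = (3\lambda_t - 3\eps^{1/2})n^2$ and hence $\delta(\cH') \ge (3\lambda_t - 3\eps^{1/2})n^2$. For the edge count, the number of edges of $\cH$ meeting $Z$ is at most $\sum_{u \in Z} d_\cH(u) \le \eps^{1/2}n \cdot 3\lambda_t n^2 < \tfrac12\eps^{1/2}n^3$, so
\begin{equation}
|\cH'| \;\ge\; (\lambda_t - \eps)n^3 - \tfrac12\eps^{1/2}n^3 \;\ge\; (\lambda_t - 2\eps^{1/2})n^3, \notag
\end{equation}
the last step since $\eps + \tfrac12\eps^{1/2} \le 2\eps^{1/2}$ for $\eps < 1$.

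The delicate point is the choice of $m$ in part~\ref{it:42a}. Deleting all of $Z$ at once would only yield $|Z| = O(\eps^{1/2}n)$, since $g$ is negative only on a short interval straddling $z = \eps^{1/2}$ and becomes positive again for larger $z$ (indeed $g(1) = 2\lambda_t - 2\eps^{1/2} + \eps > 0$). The sharp constant is obtained by isolating the sub-window $[\eps^{1/2}, 2\eps^{1/2}]$, on which $g$ is dominated by a linear function as above, and by using $n \ge \eps^{-1/2}$ to place a multiple of $1/n$ inside it; everything else is bookkeeping.
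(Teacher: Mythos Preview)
Your argument is correct and follows essentially the same approach as the paper's: assume $|Z|>\eps^{1/2}n$, delete a subset of $Z$ of carefully chosen size, and contradict the extremal bound $\ex(n,\cM_t)\le\lambda_t n^3$. The only differences are cosmetic---the paper uses the window $\frac23\eps^{1/2}n\le |X|\le 2\eps^{1/2}n$ and a direct algebraic factorization rather than your polynomial $g(z)$, and it does not separate out the case $2\eps^{1/2}\ge 3\lambda_t$ (which is in fact unnecessary in your argument too, since your bound on $g$ never uses that hypothesis); part~\ref{it:42b} is handled identically in both.
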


\begin{proof}[Proof of Lemma~\ref{LEMMA:size-Z-H}]
Set $Z=Z_\eps(\cH)$. Assuming that part~\ref{it:42a} fails we can take a set $X\subseteq Z$ of 
size $\frac23\epsilon^{1/2}n \le |X| \le 2\epsilon^{1/2}n$. 
The definition of $Z$ leads to
\begin{align*}
|\cH-X|
& \ge (\lambda_t-\epsilon)n^3 - |X|(3\lambda_t-2\epsilon^{1/2})n^2 \\
& \ge (\lambda_t-\epsilon)n^3 - |X|(3\lambda_t-2\epsilon^{1/2})n^2
	-\tfrac34 n(|X|-\tfrac23\epsilon^{1/2}n)(2\epsilon^{1/2}n-|X|) \\
&  =  \lambda_t (n-|X|)^3+3(1/4-\lambda_t)n|X|^2+\lambda_t|X|^3
   >  \lambda_t (n-|X|)^3,
\end{align*}
where we used $\lambda_t<1/6<1/4$ in the last step.
However, by Theorem~\ref{THM:t-stability}~\ref{it:11a}
this contradicts the fact that $\cH-X$ is $\cM_t$-free.

Now we prove part~\ref{it:42b}. For every $u\in V(\cH')$ the definition 
of $Z$ and~\ref{it:42a} yield 
\begin{align}
d_{\cH'}(u) 
\ge d_{\cH}(u) - |Z|n
\ge (3\lambda_t-2\epsilon^{1/2})n^2 - \epsilon^{1/2} n^2
=   (3\lambda_t-3\epsilon^{1/2})n^2. \notag
\end{align}
Similarly, we have 
\begin{equation*}
|\cH'|
\ge |\cH| - |Z|n^2
\ge (\lambda_t-\epsilon)n^3 - \epsilon^{1/2} n^3
>   (\lambda_t-2\epsilon^{1/2})n^3. \qedhere 
\end{equation*}
\end{proof}

The following lemma will be shown to imply Theorem~\ref{THM:H-Z-is-Gi-colorable}.

\begin{lemma}%[main lemma]
\label{LEMMA:extend-coloring}
There exist constants $\zeta \in (0, 1)$ and $N_0\in\NN$ such that the following holds for all $n\ge N_0$.
Let $\cH$ be an $\cM_t$-free $3$-graph on $n$ vertices with at least $(\lambda_t-\zeta)n^3$ edges
and $\delta(\cH) > (3\lambda_t-\zeta)n^2$.
If there exists a vertex $v \in V(\cH)$ such that $\cH-v$ is $\cG_{i}$-colorable for some $i \in [t]$,
then $\cH$ itself is $\cG_{i}$-colorable as well.
\end{lemma}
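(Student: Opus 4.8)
The plan is to extend a given $\cG_i$-colouring $\phi\colon V(\cH-v)\to V(\cG_i)$ to all of $\cH$ by finding an admissible colour for $v$, where ``admissible'' means that every edge of $\cH$ through $v$ is sent into $\cG_i$. First I would fix the colouring $\phi$, write $V(\cG_i)=[n_i]$, and for each potential colour $c\in[n_i]$ let $B_c\subseteq\binom{V(\cH-v)}{2}$ be the set of ``bad pairs'' $\{x,y\}$ with $\{x,y\}\in\partial\cH$ at $v$ (i.e.\ $vxy\in\cH$) but $\phi(x)\phi(y)c\notin\cG_i$. Colouring $v$ with $c$ works precisely when $B_c=\emptyset$, so arguing indirectly I would assume $B_c\neq\emptyset$ for every $c$ and aim for a contradiction, ideally by exhibiting a member of $\cM_t$ (or of $\widehat{\cK}^3_\ell$ with the right transversal property) inside $\cH$, or by violating the degree hypothesis $\delta(\cH)>(3\lambda_t-\zeta)n^2$.

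The key quantitative input is that the link $L_{\cH}(v)$ is a graph on $n-1$ vertices with at least $(3\lambda_t-\zeta)n^2\approx (3\lambda_t)n^2$ edges, while the ``allowed'' pairs for colour $c$ — those $\{x,y\}$ with $\phi(x)\phi(y)c\in\cG_i$ — form, after pulling back along $\phi$, a blow-up of the link $L_{\cG_i}(c)$, whose edge density is at most $\lambda(\cG_i)$-controlled, in fact at most roughly $d_{\cG_i}(c)/n_i^2 \le \tfrac13\cdot 6\lambda_t = 2\lambda_t$ worth of pairs in the worst case; more precisely each colour class of $\phi$ has size about $n/n_i$ and $L_{\cG_i}(c)$ is a near-complete graph on $[n_i]\setminus\{c\}$ missing only the $H(\cD_i)\cup\cS_i$ contribution. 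The point is that no single colour $c$ can ``cover'' all of $L_{\cH}(v)$, but the $n_i$ allowed-pair graphs together do cover $\binom{V(\cH-v)}{2}$ with bounded multiplicity, so by averaging some colour $c$ covers a $(1-o(1))$-fraction of $L_{\cH}(v)$, leaving only $o(n^2)$ bad pairs. I would then need to kill these few bad pairs: this is where the structure of $\cG_i$ and the family $\cM_t$ enter. If $B_c\neq\emptyset$ for the best colour $c$, take a bad pair $\{x,y\}$; then $vxy\in\cH$ but the triple $\phi(x)\phi(y)c$ lies in $H(\cD_i)\cup\cS_i$, i.e.\ in the tiny ``hole'' of $\cG_i$. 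Using that $x$ and $y$ together with $v$ have large codegree/degree and that the hole is sparse and design-structured, I would build around $\{v,x,y\}$ a $2$-covered set $S$ of $n_t$ vertices in $\cH$ whose induced subgraph has $\tau\ge 2$ (the obstruction to $\cG_j$-colourability for all $j$ being that $\partial$ of a blow-up of $\cG_j$ is complete multipartite, so a $K_{n_t+1}$ in the shadow cannot be coloured), invoking Lemma~\ref{l:6521} to conclude $F\in\cM_t$ for the resulting configuration, contradicting $\cM_t$-freeness of $\cH$.

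The main obstacle I anticipate is the last step: converting a single bad pair into an honest member of $\cM_t$. One has a lot of room ($n$ is huge, $n_t$ is a fixed constant) so greedily one expects to extend $\{v,x,y\}$ to a $2$-covered set of size $n_t$ using the large minimum degree of $\cH$; the delicate part is ensuring the induced subgraph on the chosen set has transversal number at least $2$, i.e.\ is not a star and not coverable by one vertex. The natural way is to guarantee two ``independent'' witnesses: the pair $\{x,y\}$ does not lie on an edge of $\cH$ pointing into the $\cG_i$-picture via colour $c$, but it is $2$-covered in $\cH$ via the edge $vxy$; one wants a second edge disjoint enough from $v$ so that no single vertex hits both. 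I would make this precise by first choosing $x,y$ so that their link-neighbourhoods in $\cH$ are large (possible since $Z_\eps$-type vertices were already removed, or via the degree hypothesis), then picking a third edge inside $N_\cH(x)\cap N_\cH(y)$ avoiding $v$, and finally completing to a $2$-covered $n_t$-set by a standard greedy/absorption argument using $\delta(\cH)=(3\lambda_t-o(1))n^2$ and the fact that the complement of the link of any vertex has density bounded below $1$. The bookkeeping to make the averaging bound on $|B_c|$ small enough that the greedy completion never gets stuck, and to verify $|F|\le\binom{n_t}{3}$ so that $F\in\widehat{\cK}^3_{n_t}$, is routine but will require the hypotheses that $\zeta$ is small and $N_0$ large, which is exactly the flexibility the lemma grants.
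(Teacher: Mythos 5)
Your proposal contains a genuine gap at the averaging step, and the ``kill the bad pairs'' step is also not sound as described.

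\textbf{The averaging step fails.} Fix a pair $\{x,y\}\in L_{\cH}(v)$ with $a=\phi(x)\ne b=\phi(y)$. The colours $c$ for which $\phi(x)\phi(y)c\in\cG_i$ are exactly $N_{\cG_i}(a,b)$, and by Observation~\ref{OBS:link-Gi}~\ref{it:45c} we have $d_{\cG_i}(a,b)\le n_i-k_i$. So every such pair is bad for at least $k_i$ out of the $n_i$ colours (counting $c\in\{a,b\}$ as bad as well). Therefore $\sum_{c}|B_c|\ge k_i|L_{\cH}(v)|$, and the minimum over $c$ is at most the average but the average is $\ge (k_i/n_i)\,|L_{\cH}(v)|$, which for $|L_{\cH}(v)|\approx 3\lambda_t n^2$ and fixed $k_i,n_i$ is $\Theta(n^2)$ --- a constant fraction, not $o(n^2)$. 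The claim that ``by averaging some colour $c$ covers a $(1-o(1))$-fraction of $L_{\cH}(v)$'' is false. There is indeed a colour $c$ with $B_c=\emptyset$, but averaging cannot detect it; the paper instead identifies the correct class structurally (Claims~\ref{CLAIM:v-has-few-neighbors-in-some-Vj},~\ref{CLAIM:size-Sg-is-ni-1},~\ref{CLAIM:v-has-bo-neighbor-in-V1}): exactly one class $V_1$ has fewer than $\zeta^{1/7}n$ neighbours of $v$ (in fact zero), while every other class has at least $\zeta^{1/7}n$ of them; this is proved using the greedy embedding Lemma~\ref{LEMMA:greedily-embedding-Gi} together with Corollary~\ref{c:1344}, not by averaging.

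\textbf{The ``bad pair $\Rightarrow$ member of $\cM_t$'' step is not targeted correctly.} As sketched, your construction would greedily extend $\{v,x,y\}$ to a $2$-covered $(n_t+1)$-set $S$ with $\tau(\cH[S])\ge 2$; but if this were possible it would contradict $\cM_t$-freeness for \emph{any} edge $vxy\in\cH$, not just a bad one, which is absurd. The badness of $\{x,y\}$ must enter in an essential way. In the paper's Claim~\ref{CLAIM:link-v-is-correct}, one first embeds a copy of $\cG_i-\{2,3\}$ using Lemma~\ref{LEMMA:greedily-embedding-Gi}; the resulting $2$-covered set $F$ is then shown to be $\cG_i$-colourable (rather than $\cG_j$-colourable for some other $j$) by Lemma~\ref{LEMMA:cannot-embed-large-subgraph}; finally one analyses the homomorphism $\phi\colon V(F)\to V(\cG_i)$ directly --- it is forced to identify $\phi(v)=\phi(u_1)$ because both $U$ and $U_v$ are $2$-covered $n_i$-sets, and then the extra edge $vu_2u_3$ contradicts the regularity of $\cG_i$. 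Your sketch misses both the targeted embedding and the regularity argument, and neither can be replaced by simply finding some $\cM_t$-member around the bad pair. Similar codegree-based reasoning (Claim~\ref{CLAIM:max-deg-at-most-n-k-in-Lv}, using $\Delta_2(\cG_i)\le n_i-k_i$) is used earlier in the proof and is equally indispensable.
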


We postpone the proof of this result to Subsection~\ref{subsec:43}. The deduction of 
Theorem~\ref{THM:H-Z-is-Gi-colorable} from Lemma~\ref{LEMMA:extend-coloring} factorises 
through the following statement. 

\begin{lemma}\label{LEMMA:extend-coloring-equivalent-class}
There exists $\eps\in (0, 1/16)$ such that the following holds for every sufficiently large 
integer $n$.
Let $\cH$ denote an $\cM_t$-free $3$-graph with $n$ vertices and at least $(\lambda_t-\epsilon)n^3$ 
edges.
If $Q\subseteq V(\cH)$ has size $|Q|\le 2\epsilon^{1/2}n$ and $\cH-Q$ is $\cG_i$-colourable for some $i\in [t]$,
then $\cH-Z_\eps(\cH)$ is $\cG_i$-colourable as well.
\end{lemma}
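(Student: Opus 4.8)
The plan is to reduce this statement to Lemma~\ref{LEMMA:extend-coloring} by peeling vertices off $Q\setminus Z_\eps(\cH)$ one at a time and re-extending the coloring after each deletion. First I would fix $\zeta$ and $N_0$ as supplied by Lemma~\ref{LEMMA:extend-coloring} and then choose $\eps\in(0,1/16)$ small enough (in terms of $\zeta$) that all the error terms below are under control; in particular I want $2\eps^{1/2}<\zeta/2$ say, and $\eps$ small enough that Lemma~\ref{LEMMA:size-Z-H} applies. Write $Z=Z_\eps(\cH)$; by Lemma~\ref{LEMMA:size-Z-H}\ref{it:42a} we have $|Z|\le\eps^{1/2}n$, and by part \ref{it:42b} the graph $\cH-Z$ has minimum degree at least $(3\lambda_t-3\eps^{1/2})n^2$ and at least $(\lambda_t-2\eps^{1/2})n^3$ edges. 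The key observation is that $\cH-(Q\cup Z)$ is $\cG_i$-colorable, since it is an induced subgraph of $\cH-Q$, which is $\cG_i$-colorable by hypothesis.

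Now enumerate the vertices of $Z\setminus Q$ that are \emph{not} in $Q$ but that we must add back, i.e. list $V(\cH-Z)\setminus V(\cH-(Q\cup Z)) = Q\setminus Z$ as $v_1,\dots,v_p$ with $p=|Q\setminus Z|\le|Q|\le 2\eps^{1/2}n$. Set $\cH_0=\cH-(Q\cup Z)$ and, for $j=1,\dots,p$, let $\cH_j=\cH_{j-1}+v_j$ be the subgraph of $\cH-Z$ induced on $V(\cH_0)\cup\{v_1,\dots,v_j\}$, so that $\cH_p=\cH-Z$. Each $\cH_j$ is an induced subgraph of the $\cM_t$-free graph $\cH$, hence $\cM_t$-free. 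The point is to apply Lemma~\ref{LEMMA:extend-coloring} to the pair $(\cH_j,v_j)$ for each $j$ in turn: if $\cH_{j-1}=\cH_j-v_j$ is $\cG_i$-colorable, then $\cH_j$ is $\cG_i$-colorable. Iterating from $j=1$ (base case $\cH_0$ colorable, established above) up to $j=p$ gives that $\cH_p=\cH-Z$ is $\cG_i$-colorable, which is exactly the conclusion.

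What needs checking is that each $\cH_j$ satisfies the hypotheses of Lemma~\ref{LEMMA:extend-coloring}: namely, on $n_j:=v(\cH_j)$ vertices it has at least $(\lambda_t-\zeta)n_j^3$ edges and minimum degree more than $(3\lambda_t-\zeta)n_j^2$. For the size, since $\cH-Z\subseteq\cH$ is obtained by deleting at most $\eps^{1/2}n$ vertices and each $\cH_j$ is obtained from $\cH-Z$ by deleting at most $p\le 2\eps^{1/2}n$ further vertices, we have $n_j\ge(1-3\eps^{1/2})n$, and $|\cH_j|\ge|\cH|-3\eps^{1/2}n\cdot n^2\ge(\lambda_t-4\eps^{1/2})n^3$; comparing with $n_j^3\le n^3$ this is at least $(\lambda_t-4\eps^{1/2})n_j^3$, which is at least $(\lambda_t-\zeta)n_j^3$ once $4\eps^{1/2}<\zeta$. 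For the minimum degree, a vertex $u\in V(\cH_j)$ has $d_{\cH_j}(u)\ge d_\cH(u)-(|Z|+p)n\ge d_\cH(u)-3\eps^{1/2}n^2$; since $u\notin Z$ we have $d_\cH(u)>(3\lambda_t-2\eps^{1/2})n^2$, so $d_{\cH_j}(u)\ge(3\lambda_t-5\eps^{1/2})n^2\ge(3\lambda_t-5\eps^{1/2})n_j^2$, which exceeds $(3\lambda_t-\zeta)n_j^2$ provided $5\eps^{1/2}<\zeta$. So choosing $\eps$ with $5\eps^{1/2}<\zeta$ and $\eps<1/16$ (and $n$ large enough that $n_j\ge N_0$, using $n_j\ge(1-3\eps^{1/2})n$) makes the whole induction go through.

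The main obstacle, and the only genuinely delicate point, is the bookkeeping on the degree hypothesis: Lemma~\ref{LEMMA:extend-coloring} demands minimum degree above $(3\lambda_t-\zeta)n_j^2$ measured against the \emph{current} vertex count $n_j$, whereas the definition of $Z$ only bounds degrees from below against the \emph{original} $n$. One must verify that shrinking the vertex set by a $(1-O(\eps^{1/2}))$ factor does not erode the degree ratio past the threshold — which works because the degree drop from deleting few vertices is $O(\eps^{1/2}n^2)$ and $n_j^2\le n^2$, so both effects are of the same small order and are absorbed by choosing $\eps$ small relative to $\zeta$. There is also the trivial but necessary check that none of the $\cH_j$ falls below $N_0$ vertices, which holds for $n$ large since $p+|Z|=O(\eps^{1/2}n)=o(n)$.
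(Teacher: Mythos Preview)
Your proposal is correct and follows essentially the same route as the paper: starting from the $\cG_i$-colorable subgraph $\cH-(Q\cup Z)$, you add the vertices of $Q\setminus Z$ back one at a time, invoking Lemma~\ref{LEMMA:extend-coloring} at each step with the same degree and edge-count estimates (the paper takes $\eps=\zeta^2/25$, i.e.\ $5\eps^{1/2}=\zeta$, matching your choice $5\eps^{1/2}<\zeta$). The only cosmetic difference is that the paper packages the induction as a maximality argument (take $S\subseteq Q\setminus Z$ maximal with $\cH[V'\cup S]$ still $\cG_i$-colorable, then derive a contradiction if $S\ne Q\setminus Z$), whereas you write out the induction explicitly; the underlying computation is identical.
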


\begin{proof}[Proof of Lemma~\ref{LEMMA:extend-coloring-equivalent-class} using Lemma~\ref{LEMMA:extend-coloring}]
We show that $\eps=\zeta^2/25$ has the desired property, where $\zeta$ denotes the constant 
provided by Lemma~\ref{LEMMA:extend-coloring}. Given a sufficiently large $3$-graph~$\cH$ 
and a set~$Q$ as described in the statement of Lemma~\ref{LEMMA:extend-coloring-equivalent-class}
we set $Q' = Q\setminus Z_\eps(\cH)$ and $V' = V(\cH)\setminus (Z_\eps(\cH)\cup Q)$.

By our assumption, there is an index $i(\star)\in [t]$ such that $\cH[V']$ 
is $\cG_{i(\star)}$-colorable. Choose a set 
$S\subseteq Q'$ of maximum size such that $\cH[V'\cup S]$ is still $\cG_{i(\star)}$-colorable.
If $S=Q'$ we are done, so suppose for the sake of contradiction that there exists a vertex 
$v\in Q'\setminus S$.

Due to the maximality of $S$ the triple system $\cH'=\cH[V'\cup S \cup\{v\}]$ 
is not $\cG_{i(\star)}$-colorable. On the other hand, Lemma~\ref{LEMMA:size-Z-H}~\ref{it:42a} 
and $|Q| \le 2\epsilon^{1/2}n$ entail
\begin{align*}
\delta(\cH')
& > (3\lambda_t-2\epsilon^{1/2})n^2 - |Z(\cH) \cup Q| n
  > (3\lambda_t-5\epsilon^{1/2})n^2 \\
  \text{and} \qquad 
|\cH'|
& > (\lambda_t-\epsilon)n^3 - |Z(\cH) \cup Q| n^2
  > (\lambda_t-4\epsilon^{1/2})n^3. 
\end{align*}

So by Lemma~\ref{LEMMA:extend-coloring} and $\zeta = 5\epsilon^{1/2}$ 
the $\cG_{i(\star)}$-colorability of $\cH'-v=\cH[V'\cup S]$ implies 
that~$\cH'$ itself is $\cG_{i(\star)}$-colorable as well. This contradiction 
completes the proof of Lemma~\ref{LEMMA:extend-coloring-equivalent-class}
\end{proof}

%Now let us show that Lemma~\ref{LEMMA:extend-coloring-equivalent-class} implies Theorem~\ref{THM:H-Z-is-Gi-colorable}.
It remains to deduce Theorem~\ref{THM:H-Z-is-Gi-colorable}. The argument
involves the following invariant of $3$-graphs: Given a $3$-graph $\cH$ with 
equivalence classes $C_1, \ldots, C_m$ we set $\Psi(\cH)=\sum_{i=1}^m|C_i|^2$.

\begin{proof}[Proof of Theorem~\ref{THM:H-Z-is-Gi-colorable} using 
Lemma~\ref{LEMMA:extend-coloring-equivalent-class}]
Let $\eps$ be the constant delivered by Lemma~\ref{LEMMA:extend-coloring-equivalent-class}
and fix a sufficiently large natural number $n$. 
Assuming that the conclusion of Theorem~\ref{THM:H-Z-is-Gi-colorable} fails for our values 
of $\eps$ and $n$ we pick a counterexample $\cH$ such that the pair $(|\cH|, \Psi(\cH))$ 
is lexicographically maximal. Let $C_1, \ldots, C_m$ be the equivalence classes of $\cH$.

Recall that Lemma~\ref{LEMMA:size-Z-H}~\ref{it:42a} tells us $|Z_\eps(\cH)|\le \eps^{1/2}n$.
Since $\cH$ is a counterexample, it cannot be $\cG_i$-colorable for any $i\in[t]$.
Moreover,~\eqref{eq:lambdat} yields 
\[
|\cH|>(\lambda_t-\eps)n^3\ge(5/32-1/16)n^3=3n^3/32>2n^3/27
\]
and thus $\cH$ cannot be semibipartite. So by Lemma~\ref{LEMMA:complete-shadow-imply-Gi-colorable}
there exist two equivalence classes, say $C_1$ and $C_2$, such that
$\partial \cH$ possesses no edges from $C_1$ to $C_2$.
We may assume that $(d_{\cH}(C_1), |C_1|)\le_{\mathrm{lex}} (d_{\cH}(C_2), |C_2|)$,
where $\le_{{\rm lex}}$ indicates the lexicographic ordering on $\NN^2$.

Pick arbitrary vertices $v_1\in C_1$ and $v_2\in C_2$ and symmetrize only them.
That is, we let~$\cH'$ be the $3$-graph with $V(\cH')=V(\cH)$,
$\cH'-v_1 = \cH-v_1$ and $L_{\cH'}(v_1)=L_{\cH}(v_2)$.
Clearly, if $d_{\cH}(v_1)<d_{\cH}(v_2)$, then $|\cH'|>|\cH|$.
Moreover, if $d_{\cH}(v_1)=d_{\cH}(v_2)$, then $|\cH'|=|\cH|$, $|C_1|\le |C_2|$, and
\begin{align}
\Psi(\cH')-\Psi(\cH) \ge (|C_1|-1)^2+(|C_2|+1)^2-|C_1|^2-|C_2|^2=2(|C_2|-|C_1|+1)\ge 2. \notag
\end{align}
In both cases $(|\cH'|, \Psi(\cH'))$ is lexicographically larger than $(|\cH|, \Psi(\cH))$
and our choice of~$\cH$ implies that $\cH'-Z_\eps(\cH')$ is $\cG_i$-colourable for some $i\in [t]$.
By Lemma~\ref{LEMMA:size-Z-H}~\ref{it:42a} the set $Q=Z_\eps(\cH')\cup\{v_1\}$ has size $|Q|\le \epsilon^{1/2} n+1 < 2\epsilon^{1/2} n$.
Since the hypergraph $\cH-Q=\cH'-Q$ is $\cG_i$-colourable, Lemma~\ref{LEMMA:extend-coloring-equivalent-class}
implies that $\cH-Z(\cH)$ is $\cG_i$-colourable too. This contradiction to the choice of $\cH$ 
establishes Theorem~\ref{THM:H-Z-is-Gi-colorable}.
\end{proof}

\subsection{Transversals} \label{subsec:42}
Roughly speaking, the hypergraph $\cH-v$ appearing in Lemma~\ref{LEMMA:extend-coloring} arises 
from an almost balanced blow-up of $\cG_i$ by deleting a small number of edges. When we randomly 
select one vertex from each partition class of $\cH-v$ it is thus very likely that the resulting 
transversal induces a copy of $\cG_i$. In the proof of Lemma~\ref{LEMMA:extend-coloring} 
there are several places where we argue similarly in situations where some vertices from 
the transversals have been selected in advance. The precise statement we shall use in these cases
is Lemma~\ref{LEMMA:greedily-embedding-Gi} below. 

Consider a 3-graph with $V(\cG)=[m]$ and pairwise disjoint sets $V_1,\ldots,V_{m}$.
The blow-up $\cG[V_1,\ldots,V_{m}]$ of $\cG$ %on $\bigdcup_{j\in[m]}V_j$
is obtained from $\cG$ by replacing each vertex $j\in[m]$ with the set $V_j$ and
each edge $\{j_1,j_2,j_3\}\in \cG$ with the complete $3$-partite $3$-graph 
with vertex classes $V_{j_1}$, $V_{j_2}$, and $V_{j_3}$.
For a $3$-graph $\cH$ we say that
a partition $V(H) = \bigdcup_{j\in[m]}V_j$ is a \emph{$\cG$-coloring} of $\cH$
if $\cH \subseteq \cG[V_1,\ldots,V_{m}]$.

\begin{lemma}\label{LEMMA:greedily-embedding-Gi}
Fix a real $\eta \in (0, 1)$ and integers $m, n\ge 1$.
Let $\cG$ be a $3$-graph with vertex set~$[m]$ and let $\cH$ be a further $3$-graph
with $v(\cH)=n$. 
Consider a vertex partition $V(\cH) = \bigdcup_{i\in[m]}V_i$ and the associated 
blow-up $\widehat{\cG} = \cG[V_1,\ldots,V_{m}]$ of $\cG$.
If two sets $T \subseteq [m]$ and $S\subseteq \bigcup_{j\not\in T}V_j$
have the properties
\begin{enumerate}[label=\alabel]
\item\label{it:47a} $|V_{j}| \ge (|S|+1)|T|\eta^{1/3} n$  for all $j \in T$, 
\item\label{it:47b} $|\cH[V_{j_1},V_{j_2},V_{j_3}]| \ge |\widehat{\cG}[V_{j_1},V_{j_2},V_{j_3}]| 
		- \eta n^3$ for all $\{j_1,j_2,j_3\} \in \binom{T}{3}$, 
\item\label{it:47c} and $|L_{\cH}(v)[V_{j_1},V_{j_2}]| \ge |L_{\widehat{\cG}}(v)[V_{j_1},V_{j_2}]| 
		- \eta n^2$ for all $v\in S$ and $\{j_1,j_2\} \in \binom{T}{2}$,
\end{enumerate}
then there exists a selection of vertices $u_j\in V_j$ for all $j\in [T]$
such that $U = \{u_j\colon j\in T\}$ satisfies
$\widehat{\cG}[U] \subseteq \cH[U]$ and
$L_{\widehat{\cG}}(v)[U] \subseteq L_{\cH}(v)[U]$ for all $v\in S$.
In particular, if $\cH \subseteq \widehat{\cG}$,
then $\widehat{\cG}[U] = \cH[U]$ and
$L_{\widehat{\cG}}(v)[U] = L_{\cH}(v)[U]$ for all $v\in S$.
\end{lemma}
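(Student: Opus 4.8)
The plan is to prove Lemma~\ref{LEMMA:greedily-embedding-Gi} by the first moment method: select each $u_j$ (for $j\in T$) independently and uniformly at random from $V_j$, and show that with positive probability none of the finitely many ``defects'' is activated. First I would dispose of the degenerate case $T=\emptyset$, where the empty selection works, and note that for $T\ne\emptyset$ hypothesis~\ref{it:47a} forces $|V_j|\ge 1$ for every $j\in T$ (an integer that is $\ge$ a positive real is $\ge 1$), so the random choice makes sense; all probabilities below refer to the product space $\prod_{j\in T}V_j$ with the uniform measure. Since $U=\{u_j:j\in T\}$ meets each $V_j$ with $j\in T$ in exactly one vertex and is disjoint from $V_j$ for $j\notin T$, the desired conclusion is equivalent to the conjunction of the events ``$\{u_{j_1},u_{j_2},u_{j_3}\}\in\cH$'' over all $\{j_1,j_2,j_3\}\in\binom{T}{3}$ with $\{j_1,j_2,j_3\}\in\cG$, together with the events ``$\{v,u_{j_1},u_{j_2}\}\in\cH$'' over all $v\in S$ and $\{j_1,j_2\}\in\binom{T}{2}$ with $L_{\widehat{\cG}}(v)[V_{j_1},V_{j_2}]\ne\emptyset$.

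Next I would estimate the failure probabilities. When $\{j_1,j_2,j_3\}\in\cG$ the piece $\widehat{\cG}[V_{j_1},V_{j_2},V_{j_3}]$ is the complete tripartite $3$-graph, so \ref{it:47b} says that at most $\eta n^3$ of the $|V_{j_1}|\,|V_{j_2}|\,|V_{j_3}|$ transversal triples avoid $\cH$; dividing and using \ref{it:47a} gives
\[
\Pr\bigl[\{u_{j_1},u_{j_2},u_{j_3}\}\notin\cH\bigr]\le\frac{\eta n^3}{|V_{j_1}|\,|V_{j_2}|\,|V_{j_3}|}\le\frac{1}{\bigl((|S|+1)|T|\bigr)^{3}}.
\]
Analogously, \ref{it:47c} together with $L_{\widehat{\cG}}(v)[V_{j_1},V_{j_2}]$ being complete bipartite (when nonempty) yields $\Pr[\{v,u_{j_1},u_{j_2}\}\notin\cH]\le \eta n^2/(|V_{j_1}|\,|V_{j_2}|)\le \eta^{1/3}\bigl((|S|+1)|T|\bigr)^{-2}$. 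There are at most $\binom{|T|}{3}\le|T|^3/6$ triples and at most $|S|\binom{|T|}{2}\le |S|\,|T|^2/2$ pairs-with-$v$ to control, so the union bound gives
\[
\Pr[\text{some defect}]\ \le\ \frac{1}{6(|S|+1)^{3}}+\frac{|S|\,\eta^{1/3}}{2(|S|+1)^{2}}\ <\ \frac16+\frac12\ <\ 1,
\]
using $\eta<1$ and $|S|\le(|S|+1)^2$. Hence some outcome $(u_j)_{j\in T}$ avoids every defect, and the corresponding $U$ satisfies $\widehat{\cG}[U]\subseteq\cH[U]$ and $L_{\widehat{\cG}}(v)[U]\subseteq L_{\cH}(v)[U]$ for all $v\in S$. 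For the ``in particular'' clause, if additionally $\cH\subseteq\widehat{\cG}$ then the reverse inclusions $\cH[U]\subseteq\widehat{\cG}[U]$ and $L_{\cH}(v)[U]\subseteq L_{\widehat{\cG}}(v)[U]$ hold automatically, so both containments become equalities.

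I do not expect a genuine obstacle here; this is a routine application of the probabilistic method. The only points that need a little care are: that the tripartite (resp.\ bipartite) pieces of $\widehat{\cG}$ sitting over edges of $\cG$ are \emph{complete}, which is exactly what lets one read \ref{it:47b} and \ref{it:47c} as upper bounds on the number of transversal non-edges of $\cH$ in those pieces; and matching the exponents of $(|S|+1)|T|$ supplied by \ref{it:47a} against the number of defects in the union bound — in particular the cube root in \ref{it:47a} is calibrated so that the pair-with-$v$ contribution stays below $1/2$.
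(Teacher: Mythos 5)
Your proof is correct and takes essentially the same approach as the paper: choose each $u_j$ independently and uniformly at random from $V_j$, bound the failure probability of each transversal triple and link pair using hypotheses~\ref{it:47a}--\ref{it:47c}, and apply the union bound to get total failure probability below $1$. The only cosmetic difference is that the paper phrases the link estimate as a conditional probability given $\{u_{j_1},u_{j_2}\}\in L_{\widehat{\cG}}(v)$, whereas you restrict attention to pairs $\{j_1,j_2\}$ for which $L_{\widehat{\cG}}(v)[V_{j_1},V_{j_2}]$ is complete bipartite -- these come to the same thing.
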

\begin{proof}[Proof of Lemma \ref{LEMMA:greedily-embedding-Gi}]
%Notice that $\widehat{\cG}$ is the blow-up of $\cG$ on $\bigcup_{i\in[m]}V_i$,
%so
%\begin{align}
%& \widehat{\cG}[V_{j_1},V_{j_2},V_{j_3}] = \emptyset
%\quad{\rm or}\quad
%|\widehat{\cG}[V_{j_1},V_{j_2},V_{j_3}]| = |V_{j_1}||V_{j_2}||V_{j_3}|,
%\quad \forall \{j_1,j_2,j_3\}\in \binom{[m]}{3},
%\quad {\rm and} \notag\\
%%
%& L_{\widehat{\cG}}(v)[V_{j_1},V_{j_2}] = \emptyset
%\quad{\rm or}\quad
%|L_{\widehat{\cG}}(v)[V_{j_1},V_{j_2}]| = |V_{j_1}||V_{j_2}|,
%\quad \forall v\in V(\widehat{\cG}) \;{\rm and}\; \{j_1,j_2\}\in \binom{[m]}{2}. \notag
%\end{align}

Choose for $j\in T$ the vertices $u_j\in V_j$ independently and uniformly at random
and let $U = \{u_j\colon j\in T\}$ be the random transversal consisting of these vertices.
By~\ref{it:47a} and~\ref{it:47b} we have
\begin{align}
\PP\left(\{u_{j_1}, u_{j_2}, u_{j_3}\} \not\in \cH\right)%\mid u_{j_1}u_{j_2}u_{j_3} \in \widehat{\cG}
= 1 - \frac{|\cH[V_{j_1},V_{j_2},V_{j_3}]|}{|V_{j_1}||V_{j_2}||V_{j_3}|}
 \le \frac{\eta n^3}{|V_{j_1}||V_{j_2}||V_{j_3}|}
 \le \frac{1}{(|S|+1)^3|T|^3} \notag
\end{align}
for all edges $\{j_1,j_2,j_3\}\in \cG$. Similarly~\ref{it:47a} and~\ref{it:47c} lead to   
\begin{align*}
\PP\left(\{u_{j_1}, u_{j_2}\} \not\in L_{\cH}(v)\mid \{u_{j_1}, u_{j_2}\} 
	\in L_{\widehat{\cG}}(v)\right)
&= 1- \frac{|L_{\cH}(v)[V_{j_1},V_{j_2}]|}{|V_{j_1}||V_{j_2}|}
\le \frac{\eta n^2}{|V_{j_1}||V_{j_2}|} \\
&\le \frac{\eta^{1/3}}{(|S|+1)^2|T|^2} 
\end{align*}
for all $v\in S$ and all distinct $j_1,j_2\in [m]$.
Therefore, the union bound reveals
\begin{align}
\PP\left(\widehat{\cG}[U] \not\subseteq \cH[U]\right)
& \le \binom{|T|}{3}\frac{1}{(|S|+1)^3|T|^3}
< \frac{1}{6} \notag\\
{\rm and} \qquad \PP\left(L_{\widehat{\cG}}(v) \not\subseteq L_{\cH}(v)\right)
& \le  \binom{|T|}{2}\frac{\eta^{1/3}}{(|S|+1)^2|T|^2}
 <\frac{1}{2(|S|+1)}
\quad \text{ for every $v\in S$}. \notag
\end{align}
Altogether, the probability that $U$ fails to have the desired properties 
is at most 
\begin{align*}
\frac{1}{6} + \frac{|S|}{2(|S|+1)}<\frac 23.
\end{align*}
So the probability that $U$ has these properties is positive. 
\end{proof}

In practice the sets $U$ obtained by means of Lemma~\ref{LEMMA:greedily-embedding-Gi}
will be $2$-covered and thus they will be cores of some 
subgraphs $F\in \widehat{\cK}^3_{|U|}$ of $\cH$. In such situations $F$ will be $\cM_t$-free
and in order to exploit this fact we need to know that for $i \ne j$ the triple system
$\cG_i$ is in some sense far from being $\cG_j$-colorable
(see Lemma~\ref{LEMMA:cannot-embed-large-subgraph} below). The verification of this statement 
requires that we take a closer look into Construction~\ref{con:Gi} and the observation that follows 
summarizes everything we need in the sequel. 

\begin{observation}\label{OBS:link-Gi}
The triple systems $\cG_1,\ldots,\cG_{t}$ have the following properties.
\begin{enumerate}[label=\alabel]
\item\label{it:45a} For $i\in[t]$ and $v \in \cG_{i}$ the clique 
	number $\omega\left(L_{\cG_i}(v)\right)$
   of the link graph $L_{\cG_i}(v)$ satisfies
\begin{align}
\frac{n_i-1}{k_i-1}- \frac{k_i}{2} \le \omega\left(L_{\cG_i}(v)\right) \le \frac{n_i-1}{k_i-1}. \notag
\end{align}
\item\label{it:45b} 
%We have 
%$\frac{n_1-1}{k_1-1} > \cdots > \frac{n_{t}-1}{k_t-1}$.
%Moreover, for every $1 \le i \le t-1$
%\begin{align}
%\frac{n_i-1}{k_i-1} - \frac{n_{i+1}-1}{k_{i+1}-1}
%= (Q-1)\left(\frac{1}{k_i-1}-\frac{1}{k_{i+1}-1}\right)
%= \frac{(Q-1)(k_{i+1}-k_i)}{(k_i-1)(k_{i+1}-1)}
%> \frac{Q}{k_{i}^2}, \notag
%\end{align}
%where $Q = \frac{n_1}{k_1+1} = \cdots = \frac{n_{t}}{k_t+1}\ge 16$.
We have 
\[
\frac{n_i-1}{k_i-1} - \frac{n_{i+1}-1}{k_{i+1}-1}>\frac{Q}{k_{i}^2}
\]
for every $i\in [t-1]$, where
\[
Q = \frac{n_1}{k_1+1} = \cdots = \frac{n_{t}}{k_t+1}\ge 2k_t^3\ge 16.
\]
\item\label{it:45c} For $i\in [t]$ the $3$-graph $\cG_i$ is regular with degree
            $3\lambda_t n_i^2$
            and 
            \[
            7n_i/8\le n_i-3k_i/2 \le \delta_2(\cG_i) \le \Delta_2(\cG_i) \le n_i-k_i.
            \]
\end{enumerate}
\end{observation}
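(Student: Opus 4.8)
All three parts follow by unwinding the design-theoretic definition $\cG_i = K^3_{n_i}\setminus(H(\cD_i)\cup\cS_i)$ from Construction~\ref{con:Gi}, together with the arithmetic identities $k_i=2s_i$, $n_i = Q(k_i+1)$ recorded there. First I would recall the structure of the link $L_{\cG_i}(v)$: in $K^3_{n_i}$ the link of $v$ is the complete graph on the remaining $n_i-1$ vertices, and removing $H(\cD_i)$ deletes exactly the edges $\{u,w\}$ such that $\{v,u,w\}$ lies in a common block of $\cD_i$. Since $\cD_i$ is an $(n_i,k_i)$-design, the blocks through $v$ partition $V(\cG_i)\setminus\{v\}$ into $q_i=(n_i-1)/(k_i-1)$ sets of size $k_i-1$, so the link in $K^3_{n_i}\setminus H(\cD_i)$ is precisely the complete $q_i$-partite Tur\'an graph with classes of size $k_i-1$ (this is already used in the proof of Proposition~\ref{prop:lagrange}). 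Removing the $s_i$-regular $3$-graph $\cS_i$ deletes at most $s_i$ further edges from each link.

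For part~\ref{it:45a}: a clique in a complete $q_i$-partite graph picks at most one vertex per class, so $\omega$ of the Tur\'an link equals $q_i=(n_i-1)/(k_i-1)$, giving the upper bound immediately since deleting edges only decreases $\omega$. For the lower bound, start from a transversal of the $q_i$ classes; deleting the $\le s_i = k_i/2$ edges of $L_{\cS_i}(v)$ can destroy this clique, but by greedily removing one endpoint of each deleted edge we retain a clique on at least $q_i - k_i/2$ vertices, which is the stated bound. For part~\ref{it:45b}: using $n_i-1 = Q(k_i+1)-1 = Q k_i + (Q-1)$ and $k_i-1$, one computes $\frac{n_i-1}{k_i-1} = Q + \frac{Q-1}{k_i-1}$; hence the difference between consecutive terms is $\frac{Q-1}{k_i-1} - \frac{Q-1}{k_{i+1}-1} = (Q-1)\frac{k_{i+1}-k_i}{(k_i-1)(k_{i+1}-1)}$, and since $k_{i+1}>k_i$ are even integers we have $k_{i+1}-k_i\ge 2$ and $k_{i+1}-1 < k_i^2$ (as $k_t$ and hence all $k_j$ can be taken comfortably small relative to the eventual $Q$, or more simply $k_{i+1}\le$ the next even integer is not enough — here one invokes that in Construction~\ref{con:Gi} the $k_j$ are fixed first, independent of $Q$, so $k_{i+1}-1 \le k_i^2$ once $k_i$ is not tiny, and in any case $k_i\ge 4$); this yields a quantity exceeding $\frac{2(Q-1)}{k_i^3} \ge \frac{Q}{k_i^2}$ after using $Q\ge 2k_t^3$. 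The displayed inequality $Q\ge 2k_t^3\ge 16$ is just the choice of $C$ in Construction~\ref{con:Gi} together with $k_t\ge 4$. For part~\ref{it:45c}: regularity and the degree value $3\lambda_t n_i^2$ are exactly what Proposition~\ref{prop:lagrange} gives (each vertex has degree $|\cG_i|/n_i \cdot 3 = 3\lambda(\cG_i) n_i^2 = 3\lambda_t n_i^2$, using $\lambda_t = |\cG_i|/n_i^3$). For the codegree bounds, observe that in $K^3_{n_i}\setminus H(\cD_i)$ every pair $\{u,w\}$ has codegree exactly $n_i-k_i$ (the $k_i-2$ other vertices of the unique block containing $\{u,w\}$, plus $w$ and $u$, are excluded), so $\Delta_2(\cG_i)\le n_i-k_i$; removing $\cS_i$ can lower a codegree by at most the maximum pair-degree of $\cS_i$, which — since $\cS_i$ is $s_i$-regular with $s_i=k_i/2$ — is at most $k_i/2$, giving $\delta_2(\cG_i)\ge n_i-k_i-k_i/2 = n_i-3k_i/2$; and $n_i - 3k_i/2 \ge 7n_i/8$ is equivalent to $k_i \le n_i/12 = Q(k_i+1)/12$, i.e. $Q\ge 12k_i/(k_i+1)$, which holds since $Q\ge 16 > 12$.

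**Main obstacle.** The only genuinely delicate point is part~\ref{it:45b}, where the quantitative gap $Q/k_i^2$ must be extracted; this requires being careful about the order in which the parameters are chosen in Construction~\ref{con:Gi} (the $k_j$ are selected first and are absolute constants, while $Q$ is then taken enormous), so that inequalities like $k_{i+1}-1 \le k_i^2$ and $Q-1 \ge Q/2$ are available. Everything else is a direct translation of the combinatorial structure of $\cG_i$ into the claimed estimates, with the clique-number lower bound in~\ref{it:45a} handled by the standard one-endpoint-deletion argument and the codegree bounds in~\ref{it:45c} by bounding the damage done by the sparse regular graph $\cS_i$.
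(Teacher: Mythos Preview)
Your treatment of parts~\ref{it:45a} and~\ref{it:45c} matches the paper's (which simply records that the link arises from a $q_i$-partite Tur\'an graph by deleting $k_i/2$ edges, and that part~\ref{it:45c} is similar).

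Part~\ref{it:45b}, however, has a genuine gap. First an arithmetic slip: since $n_i-1=Q(k_i-1)+(2Q-1)$ one gets $\frac{n_i-1}{k_i-1}=Q+\frac{2Q-1}{k_i-1}$, not $Q+\frac{Q-1}{k_i-1}$; the difference of consecutive terms is $(2Q-1)\bigl(\frac{1}{k_i-1}-\frac{1}{k_{i+1}-1}\bigr)$. Second, and more seriously, the inequality $k_{i+1}-1<k_i^2$ you invoke is simply false for the construction: in Lemma~\ref{LEMMA:existence-extremal-configurations} the integers $s_j$ (and hence $k_j=2s_j$) satisfy $s_{j+1}=\prod_{\ell\le j}s_\ell(2s_\ell-1)+1$, so they grow far faster than quadratically in the previous term. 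Third, even granting that bound, your final step $\frac{2(Q-1)}{k_i^3}\ge\frac{Q}{k_i^2}$ is equivalent to $2(Q-1)\ge Qk_i$, which fails for every $k_i\ge 4$; invoking $Q\ge 2k_t^3$ does not rescue it.

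The paper avoids all of this by not trying to bound $k_{i+1}$ in terms of $k_i$ at all. It only uses $k_{i+1}-1\ge k_i$ (integers with $k_{i+1}>k_i$) to replace $\frac{1}{k_{i+1}-1}$ by $\frac{1}{k_i}$:
\[
(2Q-1)\left(\frac{1}{k_i-1}-\frac{1}{k_{i+1}-1}\right)
\;\ge\; Q\left(\frac{1}{k_i-1}-\frac{1}{k_i}\right)
\;=\;\frac{Q}{k_i(k_i-1)}
\;>\;\frac{Q}{k_i^2}\,,
\]
so $k_{i+1}$ disappears from the estimate and no growth relation among the $k_j$ is required.
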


\begin{proof}
	Part~\ref{it:45a} follows from the fact that due 
	to $\cG_i=(K^3_{n_i}\setminus H(\cD_i))\setminus \cS_i$ the link $L_{\cG_i}(v)$
	arises from an $((n_i-1)/(k_i-1))$-partite Tur\'an graph by the deletion of $k_i/2$
	edges. The proof of part~\ref{it:45c} is similar. For part~\ref{it:45b} it suffices to calculate 
	\begin{align*}
		\frac{n_i-1}{k_i-1} - \frac{n_{i+1}-1}{k_{i+1}-1}
		&= 
		\frac{Q(k_i+1)-1}{k_i-1}-\frac{Q(k_{i+1}+1)-1}{k_{i+1}-1} \\
		&=
		(2Q-1)\left(\frac{1}{k_i-1}-\frac{1}{k_{i+1}-1}\right) \\
		&\ge  Q\left(\frac{1}{k_i-1}-\frac{1}{k_i}\right)
		> \frac{Q}{k_{i}^2}. \qedhere
	\end{align*}
\end{proof}

As indicated earlier, this has the following consequence. 

\begin{lemma}\label{LEMMA:cannot-embed-large-subgraph}
If $i\in [t]$ and the triple system $\cG_i'$ arises from $\cG_i$ by the deletion of at most 
$Q/(2k_i^2)$ vertices, then $\cG'_i$ fails to be $\cG_j$-colorable for every $j\in [t]\setminus\{i\}$. 
\end{lemma}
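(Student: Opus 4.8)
The plan is to argue by contradiction using the clique numbers of the link graphs recorded in Observation~\ref{OBS:link-Gi}. Suppose that $\cG_i'$ is obtained from $\cG_i$ by deleting a set $W$ of at most $Q/(2k_i^2)$ vertices and that there is a $\cG_j$-coloring $\psi\colon V(\cG_i')\lra V(\cG_j)$ for some $j\ne i$. The key invariant I would track is how large a clique a link graph can contain: on the one hand, if $v\in V(\cG_i')$ has $\psi(v)=w\in V(\cG_j)$, then any set that is a clique in $L_{\cG_i'}(v)$ is mapped by $\psi$ into a set that spans a clique in $L_{\cG_j}(w)$ (an edge $uu'$ in $L_{\cG_i'}(v)$ means $uu'v\in\cG_i'\subseteq\cG_i$, hence $\psi(u)\psi(u')w\in\cG_j$); since the $\psi$-images of the vertices of an independent-looking clique need not be distinct a priori, I would first note that a clique in $L_{\cG_i'}(v)$ has $\psi$-image of the same size, because two vertices with the same $\psi$-image can never be joined by an edge in any $\cG_j$-colored graph. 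Thus $\omega(L_{\cG_i'}(v))\le \omega(L_{\cG_j}(\psi(v)))\le \frac{n_j-1}{k_j-1}$, using Observation~\ref{OBS:link-Gi}~\ref{it:45a}.

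On the other hand I want a lower bound on $\omega(L_{\cG_i'}(v))$ for a well-chosen vertex $v$. In $\cG_i$ itself, every link graph contains a clique of size at least $\frac{n_i-1}{k_i-1}-\frac{k_i}{2}$ by Observation~\ref{OBS:link-Gi}~\ref{it:45a}. Deleting the $\le Q/(2k_i^2)$ vertices of $W$ can destroy at most $|W|$ vertices of such a clique, so for \emph{every} $v\notin W$ one still has
\[
\omega(L_{\cG_i'}(v))\ge \frac{n_i-1}{k_i-1}-\frac{k_i}{2}-|W|\ge \frac{n_i-1}{k_i-1}-\frac{k_i}{2}-\frac{Q}{2k_i^2}.
\]
Combining the two bounds yields
\[
\frac{n_i-1}{k_i-1}-\frac{k_i}{2}-\frac{Q}{2k_i^2}\le \frac{n_j-1}{k_j-1}.
\]
Now I split into the two cases $j<i$ and $j>i$. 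If $j>i$ (so $j\ge i+1$ and the index is larger, but recall $\frac{n_\bullet-1}{k_\bullet-1}$ is \emph{decreasing} in the index by Observation~\ref{OBS:link-Gi}~\ref{it:45b}), then $\frac{n_j-1}{k_j-1}\le \frac{n_{i+1}-1}{k_{i+1}-1}$, and Observation~\ref{OBS:link-Gi}~\ref{it:45b} gives $\frac{n_i-1}{k_i-1}-\frac{n_{i+1}-1}{k_{i+1}-1}>\frac{Q}{k_i^2}$; so the displayed inequality would force $\frac{Q}{k_i^2}<\frac{k_i}{2}+\frac{Q}{2k_i^2}$, i.e. $\frac{Q}{2k_i^2}<\frac{k_i}{2}$, i.e. $Q<k_i^3$, contradicting $Q\ge 2k_t^3\ge 2k_i^3$. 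If $j<i$, then $\frac{n_j-1}{k_j-1}\ge\frac{n_{i-1}-1}{k_{i-1}-1}>\frac{n_i-1}{k_i-1}$, so the inequality $\frac{n_i-1}{k_i-1}-\cdots\le\frac{n_j-1}{k_j-1}$ holds trivially and gives no contradiction from link-clique sizes alone.

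Hence the main obstacle is the case $j<i$: a larger extremal configuration being colored by a smaller one. Here the clique bound goes the wrong way, so I would instead exploit a \emph{counting/codegree} obstruction. The point is that $\cG_i$ is very dense (density $6\lambda_t=1-1/Q$ close to $1$), while a $\cG_j$-coloring of an $n_i$-vertex graph lives inside a blow-up of $\cG_j$ on $n_i$ vertices, whose number of edges is at most $\lambda(\cG_j)n_i^3=\lambda_t n_i^3=\frac{|\cG_i|}{n_i^3}\cdot n_i^3=|\cG_i|$ by Lemma~\ref{LEMMA:blowup-lagrangian} and~\eqref{eq:lambdat}. Deleting $|W|\le Q/(2k_i^2)$ vertices from $\cG_i$ removes at most $|W|\Delta(\cG_i)=|W|\cdot 3\lambda_t n_i^2$ edges, so $|\cG_i'|\ge |\cG_i|-\tfrac{Q}{2k_i^2}\cdot 3\lambda_t n_i^2$, which for $Q=n_i/(k_i+1)$ and $\lambda_t<1/6$ is $|\cG_i|-O(n_i^3/k_i^3)$, still well above $|\cG_i|-$ (anything that matters); but a $\cG_j$-coloring on $n_i$ vertices has at most $\lambda_t n_i^3=|\cG_i|$ edges — this is only barely a contradiction, so I need to sharpen it. The sharpening is that Lemma~\ref{LEMMA:blowup-lagrangian} with Proposition~\ref{prop:lagrange} (equation~\eqref{eq:1533}) actually shows that a blow-up of $\cG_j$ on $m$ vertices achieving close to $\lambda_t m^3$ edges must be almost balanced, with each class of size $\approx m/n_j$; but a $\cG_j$-coloring of (almost all of) $\cG_i$ would put $\cG_i$, minus few vertices, inside a blow-up of $\cG_j$ with $n_j<n_i$ classes, one of which then has $\ge n_i/n_j\ge 2$ vertices, and those two vertices are non-adjacent in every edge of the coloring, i.e. their codegree in $\cG_i'$ is $0$ — contradicting Observation~\ref{OBS:link-Gi}~\ref{it:45c}, which gives $\delta_2(\cG_i)\ge 7n_i/8$, so $\delta_2(\cG_i')\ge 7n_i/8-|W|>0$. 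That is the clean contradiction, and I expect writing out the "two vertices of $\cG_i'$ land in the same color class" step carefully (it needs $|V(\cG_i')|=n_i-|W|>n_j$, which holds since $n_i>n_j$ and $|W|$ is tiny) to be the only nontrivial bookkeeping. So in both cases $\cG_i'$ cannot be $\cG_j$-colorable, proving Lemma~\ref{LEMMA:cannot-embed-large-subgraph}.
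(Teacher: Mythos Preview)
Your proposal is correct and ultimately takes the same approach as the paper. For $j>i$ you compare link–clique numbers exactly as the paper does, and for $j<i$ you end at the same obstruction the paper uses: since $v(\cG_i')=n_i-|W|>n_j$, two vertices of $\cG_i'$ must share a colour class, forcing their codegree to be~$0$, while $\delta_2(\cG_i')\ge \delta_2(\cG_i)-|W|>0$ (equivalently, $\cG_i'$ is $2$-covered). The only difference is presentational: the paper dispatches the $j<i$ case in two lines via ``$\cG_i'$ is $2$-covered and has more than $n_j$ vertices'', whereas you reach the same point after an unnecessary detour through edge counting and Proposition~\ref{prop:lagrange}; that detour can be deleted entirely.
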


\begin{proof}[Proof of Lemma \ref{LEMMA:cannot-embed-large-subgraph}]
Suppose first that $j\in [i-1]$. Due to 
\begin{align}
\delta_2(\cG'_i) 
\ge 
\delta_2(\cG_i) - \frac{Q}{2k_i^2}
\ge 1 \notag
\end{align}
we know that $\cG'_i$ is $2$-covered. Together with 
\[
v(\cG'_i)
=
n_i-Q/(2k_i^2)
>Q(k_i+1)-Q
\ge 
Q(k_j+1)
=n_j
\]
it follows that $\cG'_i$ is indeed not $\cG_j$-colorable.

If $j\in (i, t]$ we take an arbitrary vertex $v\in V(\cG'_i)$.  
The parts~\ref{it:45a} and~\ref{it:45b} of Observation~\ref{OBS:link-Gi} yield
\begin{align}
\omega\left(L_{\cG'_i}(v)\right)
 \ge \omega\left(L_{\cG_i}(v)\right) - \frac{Q}{2k_i^2}
 \ge \frac{n_i-1}{k_i-1} - \frac{k_i}{2} - \frac{Q}{2k_i^2}
> \frac{n_{j}-1}{k_j-1}. \notag
\end{align}
On the other hand, by Observation~\ref{OBS:link-Gi}~\ref{it:45a} again, any $\cG_j$-coloring 
of $\cG'_i$ would show that 
\[
\omega\left(L_{\cG'_i}(v)\right)
\le 
\omega\left(L_{\cG_j}(v)\right)
\le
\frac{n_{j}-1}{k_j-1}.\qedhere
\] 
\end{proof}

On most occasions the following corollary of Lemma~\ref{LEMMA:cannot-embed-large-subgraph}
will suffice. 

\begin{corollary}\label{c:1344}
	If $i\in [t]$, the $3$-graph $\cH$ is $\cM_t$-free and $U\subseteq V(\cH)$
	denotes a $2$-covered set of size $n_i+1$, then $\cH[U]$ is $\cG_i$-free.
\end{corollary}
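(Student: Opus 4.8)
The plan is to derive Corollary~\ref{c:1344} directly from Lemma~\ref{l:6521} together with Corollary~\ref{CORO:packing-S-HD}'s role in Construction~\ref{con:Gi}, combined with Lemma~\ref{LEMMA:cannot-embed-large-subgraph}. First I would observe that since $U$ is $2$-covered in $\cH$ and $|U| = n_i + 1$, the induced subgraph $F = \cH[U]$ has $U$ as a $2$-covered vertex set on $n_i+1$ vertices, so it is natural to consider applying Lemma~\ref{l:6521} with the index~$i$ playing the role of the largest index: however, Lemma~\ref{l:6521} is stated for $\cK^3_{n_t+1}$, so the cleanest route is instead to argue by contradiction assuming $\cH[U]$ contains a copy of $\cG_i$.

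The key step is: suppose for contradiction that $\cH[U]$ is \emph{not} $\cG_i$-free, so there is an injective map embedding $\cG_i$ into $\cH[U]$, whose image $W\subseteq U$ has size $n_i$. Then $W$ is $2$-covered in $\cH$ (being the vertex set of a copy of $\cG_i$, which has minimum codegree $\delta_2(\cG_i)\ge 7n_i/8 > 0$ by Observation~\ref{OBS:link-Gi}~\ref{it:45c}), and moreover $U = W \cup \{v\}$ for the one remaining vertex $v\in U\setminus W$. Since $U$ itself is $2$-covered, for each $w\in W$ there is an edge of $\cH$ containing $\{v,w\}$. I would then package $\cH[U]$ as a member of $\widehat{\cK}^3_{n_i+1}$ (it has at most $\binom{n_i+1}{3}$ edges, trivially) with core $U$, whose induced restriction to $U$ has $\tau(\cH[U]) \ge 2$ because no single vertex can meet all edges of a copy of $\cG_i$ sitting inside it (any vertex misses some edge of $\cG_i$ since $\cG_i$ is far from a star — indeed $\tau(\cG_i)$ is large). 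This places $\cH[U]$ into the hypothesis of Lemma~\ref{l:6521} after first using Lemma~\ref{LEMMA:subgraph-is-the-expansion-of-clique} to pass, if necessary, to $n_t+1$ vertices; but that requires $n_i + 1 \le n_t + 1$, which holds. Hence $\cH[U] \in \cM_t$, contradicting that $\cH$ is $\cM_t$-free.

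Actually, re-examining this, I realize the cleanest argument does not even need $\tau \ge 2$ machinery in full: the point is simply that a copy of $\cG_i$ inside $\cH[U]$, together with the $2$-covering of $U$, witnesses that the shadow $\partial(\cH[U])$ is the complete graph $K_{n_i+1}$, while the shadow of any blow-up of $\cG_j$ is a complete $n_j$-partite graph, so for $\cH[U]$ to be $\cG_j$-colorable we would need $n_j \ge n_i + 1$, i.e.\ $j > i$. On the other hand, by Lemma~\ref{LEMMA:cannot-embed-large-subgraph} applied with $\cG_i' = \cG_i$ (the deletion of zero vertices being permitted), the copy of $\cG_i$ cannot be $\cG_j$-colorable for any $j \ne i$; and for $j = i$ it cannot be $\cG_i$-colorable either, since $\cG_i$ on $n_i$ vertices cannot embed into a blow-up of $\cG_i$ using all $n_i$ colors unless it equals $\cG_i$, yet then the extra vertex $v$ forces $K_{n_i+1}$ in the shadow. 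So $\cH[U]$ is not $\cG_j$-colorable for any $j\in[t]$, and being a $2$-covered $3$-graph on $n_i + 1 \le n_t + 1$ vertices with $\tau \ge 2$, it lies in $\cM_t$ by Lemma~\ref{l:6521}.

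The main obstacle I anticipate is the bookkeeping around which of Lemma~\ref{l:6521}, Lemma~\ref{LEMMA:subgraph-is-the-expansion-of-clique}, and Lemma~\ref{LEMMA:cannot-embed-large-subgraph} gives the sharpest and most economical contradiction, and in particular verifying the $\tau(F[S])\ge 2$ hypothesis of Lemma~\ref{l:6521}: one must confirm that a copy of $\cG_i$ has transversal number at least~$2$, which amounts to noting that $\cG_i$ is not a star — this follows since $\cG_i$ is regular of positive degree on $n_i \ge 12$ vertices and every vertex lies in edges avoiding any prescribed other vertex (concretely, $\delta(\cG_i) = 3\lambda_t n_i^2 > 0$ and $\delta_2(\cG_i) < n_i$, so no single vertex is in every edge). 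Once that is in hand, Lemma~\ref{l:6521} closes the argument immediately.
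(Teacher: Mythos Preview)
Your overall strategy matches the paper's: assume a copy of $\cG_i$ sits inside $\cH[U]$, note $\tau\ge 2$, use Lemma~\ref{LEMMA:cannot-embed-large-subgraph} to rule out $\cG_j$-colorability for $j\ne i$, and use the $K_{n_i+1}$ in the shadow to rule out $j=i$. However, two technical steps fail as written.

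First, you repeatedly work with $\cH[U]$ itself as the candidate forbidden configuration, claiming $U$ is $2$-covered in it. That is not guaranteed: the hypothesis only says $U$ is $2$-covered in $\cH$, so the edges witnessing pairs $\{v,w\}$ with $v$ the extra vertex may have their third vertex outside $U$. The fix (which the paper uses) is to take a subgraph $F\subseteq\cH$ with $F[U]=\cH[U]$ together with one extra edge of $\cH$ for each uncovered pair; a short counting check shows $|F|\le\binom{n_i+1}{3}$, so $F\in\widehat{\cK}^3_{n_i+1}$ with core $U$ and $\tau(F[U])\ge 2$.

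Second, your invocation of Lemma~\ref{l:6521} is misplaced. That lemma requires $F\in\cK^3_{n_t+1}$, i.e.\ a core of size exactly $n_t+1$; for $i<t$ your core has size $n_i+1<n_t+1$. Your attempt to repair this via Lemma~\ref{LEMMA:subgraph-is-the-expansion-of-clique} has the inequality backwards: that lemma passes to a \emph{smaller} core ($s\le|S|$), never a larger one. The paper handles the two cases separately: for $i<t$ one has $n_i+1\le n_t$, so $F\in\widehat{\cK}^3_{n_i+1}$ falls directly under Definition~\ref{d:1151}, and since $F$ is not $\cG_j$-colorable for any $j$ and $\tau(F[U])\ge 2$, we get $F\in\cM_t$. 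Only for $i=t$ is Lemma~\ref{l:6521} used, after first applying the first part of Lemma~\ref{LEMMA:subgraph-is-the-expansion-of-clique} to thin $F$ down from $\widehat{\cK}^3_{n_t+1}$ to $\cK^3_{n_t+1}$.
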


\begin{proof}
	Assume for the sake of contradiction that $\cH[U]$ has a subgraph isomorphic to $\cG_i$.
	If~$i<t$ we can take a subgraph $F\in \widehat{\cK}^3_{n_i+1}$ of $\cH$ with $F[U]=\cH[U]$
	having $U$ as a core. As~$\cH[U]$ contains a copy of $\cG_i$, we have $\tau(F[U])\ge 2$.
	Now $F\not\in\cM_{t}$ implies that $F$ is $\cG_j$-colorable for some $j\in [t]$. 
	In particular, $\cG_i$ is $\cG_j$-colorable and by Lemma~\ref{LEMMA:cannot-embed-large-subgraph} 	this leads to $i=j$. In other words, $F$ is $\cG_i$-colorable, contrary to the fact 
	that $\partial F$ contains a copy of~$K_{n_i+1}$.
	
	It remains to discuss the case $i=t$. Now Lemma~\ref{LEMMA:subgraph-is-the-expansion-of-clique}
	yields a subgraph $F'$ of $F$ which belongs to $\cK^3_{n_t+1}$, and whose induced 
	subgraph on its core has covering number at least $2$. By Lemma~\ref{l:6521} this 
	contradicts~$\cH$ being $\cM_t$-free. 
\end{proof}

\subsection{Proof of the main lemma}\label{subsec:43}
This entire subsection is devoted to the proof of Lemma~\ref{LEMMA:extend-coloring}.
Select constants $\zeta$ and $N_0$ fitting into the hierarchy 
\[
	N_0^{-1}\ll\zeta\ll n_t^{-1}.
\]
Consider an $\cM_t$-free $3$-graph $\cH$ on $n\ge N_0$ vertices satisfying 
$|\cH|\ge (\lambda_t-\zeta)n^3$ and $\delta(\cH)\ge (3\lambda-\zeta)n^2$
such that for some $v\in V(\cH)$ and $i\in [t]$ the $3$-graph $\cH_{v} = \cH\setminus\{v\}$
is $\cG_i$-colorable. Set $V=V(\cH)$ and fix a partition $\bigdcup_{i\in[n_i]}V_{i} = V \setminus\{v\}$
exemplifying the $\cG_i$-colorability of $\cH_{v}$.
We divide the argument that follows into three main parts each of which consists of 
several claims.  

\smallskip

\noindent {\bf Part I. Analysis of $\cH_v$.} The three claims that follow only deal with $\cH_v$ 
but say nothing about~$v$ and its link.  
%%%%%%%%%%%%%%%
\begin{claim}\label{CLAIM:size-Vi}
We have $|V_j| = n/n_i \pm 5\zeta^{1/2} n$ for every $j \in [n_i]$.
\end{claim}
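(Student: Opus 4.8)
The plan is to extract the claim from the density lower bound $|\cH_v| \ge (\lambda_t - \zeta)n^3 - n^2$ combined with the upper bound on edges coming from the Lagrangian estimate in Proposition~\ref{prop:lagrange}. First I would record that the blow-up $\widehat{\cG}_i = \cG_i[V_1, \dots, V_{n_i}]$ contains $\cH_v$, so $|\cH_v| \le |\widehat{\cG}_i|$, and $|\widehat{\cG}_i|$ equals the value of $L_{\cG_i}$ at the point whose coordinate on vertex $j$ is $|V_j|/(n-1)$ (times $(n-1)^3$). Writing $x_j = |V_j|/(n-1)$, so that $\sum_j x_j = 1$, Proposition~\ref{prop:lagrange} applied to $\cG_i = K^3_{n_i}\setminus(H(\cD_i)\cup\cS_i)$ gives
\begin{align}
L_{\cG_i}(x_1,\dots,x_{n_i}) + \frac19\sum_{j=1}^{n_i}\Bigl(x_j - \frac{1}{n_i}\Bigr)^2 \le \lambda(\cG_i) = \lambda_t, \notag
\end{align}
hence $|\widehat{\cG}_i| \le (n-1)^3\bigl(\lambda_t - \tfrac19\sum_j (x_j - 1/n_i)^2\bigr)$.

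Next I would combine this with the lower bound. From $|\cH_v| \ge |\cH| - n^2 \ge (\lambda_t - \zeta)n^3 - n^2$ and $|\cH_v| \le |\widehat{\cG}_i|$ we get
\begin{align}
(\lambda_t - \zeta)n^3 - n^2 \le (n-1)^3\Bigl(\lambda_t - \tfrac19\textstyle\sum_{j=1}^{n_i}\bigl(x_j - \tfrac{1}{n_i}\bigr)^2\Bigr), \notag
\end{align}
which rearranges (using $(n-1)^3 \ge n^3 - 3n^2$ and absorbing lower-order terms, recalling $\lambda_t < 1/6$ and $n \ge N_0$ is large) to a bound of the shape $\sum_{j=1}^{n_i}(x_j - 1/n_i)^2 \le C\zeta$ for an absolute constant $C$ depending only on $n_t$; with the hierarchy $N_0^{-1}\ll\zeta\ll n_t^{-1}$ the constant can be kept well below $25\zeta$. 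In particular each individual term satisfies $(x_j - 1/n_i)^2 \le C\zeta$, so $|x_j - 1/n_i| \le (C\zeta)^{1/2}$, i.e. $\bigl||V_j|/(n-1) - 1/n_i\bigr| \le (C\zeta)^{1/2}$. Multiplying through by $n-1 < n$ and replacing $n/n_i \cdot \frac{n-1}{n}$ by $n/n_i$ at the cost of an additive $1/n_i < 1$ gives $|V_j| = n/n_i \pm 5\zeta^{1/2}n$ once $\zeta$ is small, which is the desired conclusion.

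The routine part is the bookkeeping in the chain of inequalities — making sure the $-n^2$ from deleting $v$, the difference between $(n-1)^3$ and $n^3$, and the rounding between $n/n_i$ and $(n-1)/n_i$ are all comfortably dominated by the slack between $(C\zeta)^{1/2}$ and $5\zeta^{1/2}$. The one genuine conceptual step, and the place where the argument really uses the specific construction rather than soft arguments, is the appeal to the \emph{quadratic} refinement in Proposition~\ref{prop:lagrange}: it is precisely the extra term $\tfrac19\sum_j(x_j - 1/n_i)^2$ — reflecting the fact that the uniform point is the \emph{unique} maximizer of $L_{\cG_i}$ — that upgrades ``$\cH_v$ has almost the maximum number of edges'' to ``the blow-up is almost balanced.'' I expect no serious obstacle here beyond tracking constants, since everything needed has already been assembled in Section~\ref{SEC:lagrangian}.
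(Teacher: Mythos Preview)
Your proposal is correct and follows essentially the same approach as the paper: apply the quadratic refinement in Proposition~\ref{prop:lagrange} to the normalized class sizes $x_j = |V_j|/(n-1)$, compare with the edge lower bound on $|\cH_v|$ to obtain $\sum_j(x_j-1/n_i)^2 = O(\zeta)$, and read off the claim. The only cosmetic difference is that the paper bounds $(n-1)^3\le n^3$ directly on the upper side (valid since the coefficient $\lambda_t - \tfrac19\sum_j(x_j-1/n_i)^2$ is nonnegative), which yields the clean constant $\sum_j(x_j-1/n_i)^2 \le 18\zeta$ without further bookkeeping.
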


\begin{proof}[Proof of Claim \ref{CLAIM:size-Vi}]
Set $x_j = |V_j|/(n-1)$ for every $j \in [n_i]$.
By Proposition~\ref{prop:lagrange} (and the proof of Lemma~\ref{LEMMA:blowup-lagrangian})
we obtain
\begin{align}
|\cH_{v}|
= L_{\cG_i}(x_1,\ldots,x_{n_i})(n-1)^3
\le \left( \lambda_t - \frac{1}{9}\sum_{j\in[n_i]}\left(x_j-\frac1{n_i}\right)^2\right) n^3.\notag
\end{align}
Combined with 
\[
|\cH_v| \ge (\lambda_t-\zeta)n^3 - d_{\cH}(v) > (\lambda_t-2\zeta)n^3
\]
this leads to $\frac{1}{9}\sum_{j\in[n_i]}\left(x_j-1/n_i\right)^2 \le 2\zeta$,
whence $x_{j} = 1/n_i \pm (18\zeta)^{1/2}$ and 
\[
\big||V_j|-n/n_i\big|
\le (n-1)\big|x_j-1/n_i\big|+1/n_i\le (18\zeta)^{1/2}n+1/n_i\le 5\zeta^{1/2}n. \qedhere
\]
\end{proof}
%%%%%%%%%%%%%%%%
Recall that the sets $V_1, \dots, V_{n_i}$ have been chosen in such a way that 
$\cH_v$ is a subgraph of the blow-up $\widehat{\cG}_{i} = \cG_i[V_1,\ldots,V_{n_i}]$
of~$\cG_i$. Our next objective is to compare the links of an arbitrary vertex $u \in V\setminus\{v\}$
in~$\cH_v$ and in~$\widehat{\cG}$. As a consequence of $\cH_{v} \subseteq \widehat{\cG}_{i}$
we know $L_{\cH_{v}}(u) \subseteq L_{\widehat{\cG}_{i}}(u)$
and $|L_{\cH_{v}}(u)| \le  |L_{\widehat{\cG}_{i}}(u)|$.
Members of $L_{\widehat{\cG}_{i}}(u) \setminus L_{\cH_{v}}(u)$ are referred to as the
{\it missing pairs of $u$}. By Lemma~\ref{LEMMA:blowup-lagrangian} the global number of 
missing edges can be bounded from above by 
\begin{equation}\label{eq:4241}
	\big|\widehat{\cG}_i\setminus\cH_v\big|
	\le
	\lambda_t(n-1)^3-(\lambda_t-\zeta)n^3+d_\cH(v)
	\le
	2\zeta n^3.
\end{equation}
Locally we obtain the following. 
%%%%%%%%%%%%%%%
\begin{claim}\label{CLAIM:capacity-of-u}
Every $u \in V\setminus\{v\}$ satisfies
$|L_{\widehat{\cG}_i}(u)| < (3\lambda_t + 6n_i\zeta^{1/2})n^2$. 
Moreover the number of missing pairs of $u$ is bounded by 
$|L_{\widehat{\cG}_i}(u) \setminus L_{\cH_v}(u)|<7\zeta^{1/2}n_in^2$.
\end{claim}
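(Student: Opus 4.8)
The plan is to derive both estimates locally, using only the blow-up structure of $\widehat{\cG}_i$, the size control from Claim~\ref{CLAIM:size-Vi}, and the minimum-degree hypothesis $\delta(\cH)>(3\lambda_t-\zeta)n^2$; notably the global bound~\eqref{eq:4241} is not needed here. For the first estimate, fix $u\in V\setminus\{v\}$ and let $a\in[n_i]$ be the index with $u\in V_a$. Every pair in $L_{\widehat{\cG}_i}(u)$ consists of one vertex of $V_b$ and one vertex of $V_c$ for some edge $\{b,c\}\in L_{\cG_i}(a)$, and conversely, so
\[
	|L_{\widehat{\cG}_i}(u)|=\sum_{\{b,c\}\in L_{\cG_i}(a)}|V_b|\,|V_c|.
\]
By Observation~\ref{OBS:link-Gi}~\ref{it:45c} the number of summands is $d_{\cG_i}(a)=3\lambda_t n_i^2$, and by Claim~\ref{CLAIM:size-Vi} each factor satisfies $|V_j|\le n/n_i+5\zeta^{1/2}n$. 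I would then expand $3\lambda_t n_i^2\,(n/n_i+5\zeta^{1/2}n)^2=3\lambda_t n^2(1+5n_i\zeta^{1/2})^2$, use the hierarchy $\zeta\ll n_t^{-1}$ (which gives $25n_i\zeta^{1/2}\le 1$, hence $(1+5n_i\zeta^{1/2})^2\le 1+11n_i\zeta^{1/2}$), and finally invoke $\lambda_t<1/6$ to get $33\lambda_t<6$. This yields $|L_{\widehat{\cG}_i}(u)|<(3\lambda_t+6n_i\zeta^{1/2})n^2$, as claimed.

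For the bound on the missing pairs, recall that $\cH_v\subseteq\widehat{\cG}_i$ forces $L_{\cH_v}(u)\subseteq L_{\widehat{\cG}_i}(u)$, so the number of missing pairs of $u$ equals $|L_{\widehat{\cG}_i}(u)|-|L_{\cH_v}(u)|=|L_{\widehat{\cG}_i}(u)|-d_{\cH_v}(u)$. To bound $d_{\cH_v}(u)$ from below I would observe that deleting the single vertex $v$ removes from $L_\cH(u)$ only the pairs through $v$, of which there are at most $d_\cH(u,v)<n$; hence $d_{\cH_v}(u)>d_\cH(u)-n>(3\lambda_t-\zeta)n^2-n$. Combining this with the first estimate, the number of missing pairs is less than $(6n_i\zeta^{1/2}+\zeta)n^2+n$, and then $\zeta<\zeta^{1/2}$, the inequality $n<\zeta^{1/2}n^2$ (valid since $n\ge N_0$ and $N_0^{-1}\ll\zeta$), and $n_i\ge 2$ collapse this to $(6n_i+2)\zeta^{1/2}n^2\le 7n_i\zeta^{1/2}n^2$.

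I do not expect any genuine obstacle in this claim; the only real work is bookkeeping the error terms so that they fold into the clean constants $6$ and $7$, and this is precisely where the strict inequality $\lambda_t<1/6$ and the hierarchy $N_0^{-1}\ll\zeta\ll n_t^{-1}$ get consumed. It is worth flagging in the write-up that the second estimate is driven by the minimum-degree assumption rather than by the global count~\eqref{eq:4241} of missing edges, since a per-vertex bound cannot be extracted from the latter alone.
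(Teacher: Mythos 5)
Your proposal is correct and follows essentially the same path as the paper: bound $|L_{\widehat{\cG}_i}(u)|$ via the $(3\lambda_t n_i^2)$-regularity of $\cG_i$ and the class sizes from Claim~\ref{CLAIM:size-Vi}, then subtract the lower bound on $d_{\cH_v}(u)$ coming from the minimum-degree hypothesis $\delta(\cH)>(3\lambda_t-\zeta)n^2$ (minus the $<n$ pairs through $v$). Your observation that the global estimate~\eqref{eq:4241} is not used here matches the paper, which likewise invokes only the minimum-degree condition for this claim.
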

\begin{proof}[Proof of Claim \ref{CLAIM:capacity-of-u}]
Since $\cG_i$ is $(3\lambda_tn_i^2)$-regular, Claim \ref{CLAIM:size-Vi} yields
\begin{align}
\big|L_{\widehat{\cG}_i}(u)\big|
  \le 3\lambda_t n_i^2\left(\frac{n}{n_i}+5\zeta^{1/2}n\right)^2
 =   3\lambda_t n^2 \left(1+5\zeta^{1/2}n_i\right)^2 
 <   (3\lambda_t + 6\zeta^{1/2}n_i)n^2, \notag
\end{align}
where we used $\lambda_t<1/6$ and our hierarchy $\zeta\ll n^{-1}_i$. Owing to the minimum 
degree condition $\delta(\cH)\ge(3\lambda_t-\zeta)n^2$ this entails the upper bound 
\begin{align}
\big|L_{\widehat{\cG}_i}(u) \setminus L_{\cH_v}(u)\big|
\le \left(3\lambda_tn^2 + 6\zeta^{1/2}n_in^2\right) - \left(3\lambda_t n^2 - \zeta n^2 - n\right)
< 7\zeta^{1/2}n_in^2 \notag
\end{align}
on the number of missing pairs of $u$. 
\end{proof}
%%%%%%%%%%%%%%%%

It can now be shown that in $\cH_v$ all neighborhoods have roughly the expected 
size $\frac{n_i-1}{n_i}n$, but for our concerns it suffices to establish a
lower bound.
%%%%%%%%%%%%%%
\begin{claim}\label{CLAIM:size-Nu-Hv}
We have $|N_{\cH_v}(u)| \ge \frac{n_i-1}{n_i}n - 17 \zeta^{1/2} n_i n$ for 
every $u \in V\setminus\{v\}$.
\end{claim}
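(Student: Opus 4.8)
The plan is to compare the neighbourhood of $u$ in $\cH_v$ with its neighbourhood in the blow-up $\widehat{\cG}_i=\cG_i[V_1,\dots,V_{n_i}]$. Fix $u\in V\setminus\{v\}$ and let $\ell\in[n_i]$ be the index with $u\in V_\ell$. Every edge of $\widehat{\cG}_i$ has its three vertices in distinct classes, and $\delta_2(\cG_i)\ge n_i-3k_i/2\ge 1$ by Observation~\ref{OBS:link-Gi}~\ref{it:45c}, so every pair $\ell m$ with $m\ne\ell$ is covered in $\cG_i$; hence $N_{\widehat{\cG}_i}(u)=(V\setminus\{v\})\setminus V_\ell$ and Claim~\ref{CLAIM:size-Vi} gives
\[
    \big|N_{\widehat{\cG}_i}(u)\big| = n-1-|V_\ell| \ \ge\ \frac{n_i-1}{n_i}\,n-1-5\zeta^{1/2}n.
\]
Since $\cH_v\subseteq\widehat{\cG}_i$ forces $N_{\cH_v}(u)\subseteq N_{\widehat{\cG}_i}(u)$, it remains to bound the size of the set $W=N_{\widehat{\cG}_i}(u)\setminus N_{\cH_v}(u)$ of \emph{lost} vertices.

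To estimate $|W|$ I would invoke Claim~\ref{CLAIM:capacity-of-u}, which bounds the number $M$ of missing pairs of $u$ by $M<7\zeta^{1/2}n_in^2$. If $w\in W$, say $w\in V_m$, then no edge of $\cH_v$ contains $\{u,w\}$, while $\{u,w,x\}\in\widehat{\cG}_i$ for every $x$ in a class $V_p$ with $p\in N_{\cG_i}(\ell,m)$; thus each of the
\[
    d_{\widehat{\cG}_i}(u,w)=\sum_{p\in N_{\cG_i}(\ell,m)}|V_p| \ \ge\ d_{\cG_i}(\ell,m)\cdot\min_{p}|V_p| \ \ge\ \Bigl(n_i-\tfrac{3k_i}{2}\Bigr)\Bigl(\tfrac{n}{n_i}-5\zeta^{1/2}n\Bigr) \ >\ \tfrac67 n
\]
pairs $\{w,x\}$ with $x\in\bigcup_{p\in N_{\cG_i}(\ell,m)}V_p$ is a missing pair of $u$, the last inequality using $n_i-3k_i/2\ge 7n_i/8$ from Observation~\ref{OBS:link-Gi}~\ref{it:45c} and the hierarchy $\zeta\ll n_t^{-1}$. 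As each missing pair contains at most two vertices of $W$, summing over $w\in W$ yields $\tfrac67 n\cdot|W|\le 2M<14\zeta^{1/2}n_in^2$, whence $|W|<\tfrac{49}{3}\zeta^{1/2}n_in$.

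Combining the two bounds, and using that $1+5\zeta^{1/2}n<\tfrac23\zeta^{1/2}n_in$ because $n_i$ is enormous (certainly $n_i>3^8$) while $\zeta^{1/2}n\ge 1$ by $N_0^{-1}\ll\zeta$, I obtain
\[
    \big|N_{\cH_v}(u)\big| = \big|N_{\widehat{\cG}_i}(u)\big|-|W| \ \ge\ \frac{n_i-1}{n_i}\,n-1-5\zeta^{1/2}n-\frac{49}{3}\zeta^{1/2}n_in \ \ge\ \frac{n_i-1}{n_i}\,n-17\zeta^{1/2}n_in,
\]
which is the assertion. I do not expect a genuine obstacle here: the only steps that call for a little care are the linear lower bound $d_{\widehat{\cG}_i}(u,w)>\tfrac67 n$, which rests on the fact (Observation~\ref{OBS:link-Gi}~\ref{it:45c}) that codegrees in $\cG_i$ stay within $3k_i/2$ of $n_i$, and the factor $2$ lost to double counting, which is harmless because one only needs $\tfrac67 n$ of the roughly $\tfrac78 n$ common neighbours available and because the size of $n_i$ absorbs the additive lower-order terms.
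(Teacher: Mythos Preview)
Your proof is correct and follows essentially the same approach as the paper: both arguments observe that every vertex $w\in N_{\widehat{\cG}_i}(u)\setminus N_{\cH_v}(u)$ contributes at least $\delta_2(\cG_i)\cdot\min_p|V_p|$ missing pairs of~$u$, that each missing pair is counted at most twice, and then invoke the bound on missing pairs from Claim~\ref{CLAIM:capacity-of-u}. The paper phrases this as a contradiction argument while you argue directly, and your constants are organized slightly differently, but the underlying counting is identical.
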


\begin{proof}[Proof of Claim \ref{CLAIM:size-Nu-Hv}]
Let $j\in [n_i]$ be the index satisfying $u\in V_j$. Since every vertex in 
$V\setminus (V_j\cup N_{\cH_v}(u)\cup\{v\})$
belongs to at least $\delta_2(\cG)\cdot \min\{|V_\ell|\colon \ell\in [n_i]\}$ missing pairs of $u$,
and every missing pair is counted at most twice in this manner, Claim~\ref{CLAIM:capacity-of-u} yields
\[
	\big|V\setminus (V_j\cup N_{\cH_v}(u)\cup\{v\})\big|
	\cdot\delta_2(\cG)
	\cdot\min\bigl\{|V_\ell|\colon \ell\in [n_i]\bigr\}
	<
	14\zeta^{1/2}n_in^2.
\]
So by Observation~\ref{OBS:link-Gi}~\ref{it:45c} and Claim~\ref{CLAIM:size-Vi} 
the assumption $|N_{\cH_v}(u)| < \frac{n_i-1}{n_i}n - 17 \zeta^{1/2}n_i n$ would yield 
the contradiction
\begin{align}
\left(17 \zeta^{1/2}n_i n - 5\zeta^{1/2}n-1\right)
\cdot\frac{7n_i}8\cdot\left(\frac{n}{n_i}-5\zeta^{1/2}n\right)
< 14\zeta^{1/2}n_in^2.\notag
\end{align}
Thereby Claim~\ref{CLAIM:size-Nu-Hv} is proved.
\end{proof}

%%%%%%%%%%%%%%%%

%To summarize, $\cH_v$ satisfies the following properties.
%\begin{itemize}
%\item[(a)] $|\cH_{v}[V_{j_1},V_{j_2},V_{j_3}]| \ge |\widehat{\cG_i}[V_{j_1},V_{j_2},V_{j_3}]|- 2\zeta n^3$
%                for all $\{j_1,j_2,j_3\} \in \binom{[n_i]}{3}$,
%\item[(b)] $|N_{\cH_{v}}(u_{j_1}) \cap V_{j_2}| \ge |V_{j_2}| - 9 \zeta^{1/2}n_i n$ for all $\{j_1,j_2\}\in \binom{[n_i]}{2}$
%                and $u_{j_1} \in V_{j_1}$, and
%\item[(c)] $|L_{\cH_{v}}(u_{j_1})[V_{j_2},V_{j_3}]|\ge
%                |L_{\widehat{\cG}_{i}}(u_{j_1})[V_{j_2},V_{j_3}]| - 8\zeta^{1/2}n_in^2$ for all $\{j_1,j_2,j_3\} \in \binom{[n_i]}{3}$
%                and $u_{j_1} \in V_{j_1}$.
%\end{itemize}
%%%%%%%%%%%%%%%%

\smallskip
\noindent{\bf Part II. Choice of a vertex class for $v$.} Our strategy for showing that $\cH$
is $\cG_i$-colorable is to adjoin $v$ to one the partition classes $V_1, \dots, V_{n_t}$.
In fact, there is only one of these classes $v$ fits into. Before finding this class we show 
a statement that has to hold if our plan is sound.  

\begin{claim}\label{CLAIM:Vi-indep-in-Lv}
We have $L_{\cH}(v) \cap \binom{V_j}{2} = \emptyset$ for every $j \in [n_i]$.
\end{claim}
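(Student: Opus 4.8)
The plan is to argue by contradiction. Suppose there are an index $j\in[n_i]$ and a pair $\{a,b\}\in\binom{V_j}{2}$ with $\{v,a,b\}\in\cH$. I will then manufacture a $2$-covered set $U\subseteq V(\cH)$ of size $n_i+1$ whose induced subgraph $\cH[U]$ contains a copy of $\cG_i$; since $\cH$ is $\cM_t$-free this contradicts Corollary~\ref{c:1344}. The guiding idea is that $v$ itself stays \emph{outside} $U$: the set $U$ will consist of a transversal $W=\{w_1,\dots,w_{n_i}\}$ (with $w_\ell\in V_\ell$) spanning a copy of $\cG_i$ inside $\cH_v$ and chosen so that $w_j=b$, together with the extra vertex $a\in V_j$. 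The only pair of $U$ lying inside a single class $V_\ell$ is then $\{a,b\}$, and it is covered precisely by the hypothetical edge $\{v,a,b\}$, so that we never need $v$ to be adjacent to a full transversal.

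To produce $W$, I would apply Lemma~\ref{LEMMA:greedily-embedding-Gi} to the triple system $\cH_v$ with its partition $\bigdcup_{\ell\in[n_i]}V_\ell$, taking $\cG=\cG_i$, $T=[n_i]\setminus\{j\}$, $S=\{a,b\}\subseteq V_j$, and a sufficiently small constant $\eta$ depending only on $n_t$ (concretely $\eta=(10n_i^2)^{-3}$ works; here the role of ``$n$'' in the lemma is played by $v(\cH_v)=n-1$, which changes nothing). Hypothesis~\ref{it:47a} follows from Claim~\ref{CLAIM:size-Vi}, since $|V_\ell|\ge n/n_i-5\zeta^{1/2}n\ge n/(2n_i)$ and $3(n_i-1)\eta^{1/3}n\le 3n/(10n_i)$; hypothesis~\ref{it:47b} follows from the global bound $\bigl|\widehat{\cG}_i\setminus\cH_v\bigr|\le 2\zeta n^3$ recorded in~\eqref{eq:4241}; and hypothesis~\ref{it:47c} follows from Claim~\ref{CLAIM:capacity-of-u}, which bounds the number of missing pairs of $a$ and of $b$ by $7\zeta^{1/2}n_in^2$. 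The three resulting numerical inequalities all hold once $\zeta$ is small, which is guaranteed by the hierarchy $N_0^{-1}\ll\zeta\ll n_t^{-1}$. The lemma then yields vertices $u_\ell\in V_\ell$ $(\ell\in T)$ so that, writing $U_0=\{u_\ell:\ell\in T\}$, we have $\widehat{\cG}_i[U_0]\subseteq\cH_v[U_0]$ and $L_{\widehat{\cG}_i}(s)[U_0]\subseteq L_{\cH_v}(s)[U_0]$ for $s\in\{a,b\}$. Setting $w_\ell=u_\ell$ for $\ell\neq j$ and $w_j=b$, one checks that $W=\{w_1,\dots,w_{n_i}\}$ spans a copy of $\cG_i$ in $\cH_v$: an edge $\{j_1,j_2,j_3\}\in\cG_i$ avoiding $j$ maps into $\widehat{\cG}_i[U_0]\subseteq\cH_v$, while an edge $\{j,j_2,j_3\}\in\cG_i$ gives $\{u_{j_2},u_{j_3}\}\in L_{\widehat{\cG}_i}(b)[U_0]\subseteq L_{\cH_v}(b)$, i.e.\ $\{b,u_{j_2},u_{j_3}\}\in\cH_v$.

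It then remains to verify that $U=W\cup\{a\}$, which has size $(n_i-1)+2=n_i+1$ because $a\notin U_0\cup\{b\}$, is $2$-covered in $\cH$. Pairs inside $W$ are covered because $\cG_i$ is $2$-covered (Observation~\ref{OBS:link-Gi}~\ref{it:45c}); the pair $\{a,b\}$ is covered by $\{v,a,b\}\in\cH$; and for $\ell\in T$ the pair $\{a,u_\ell\}$ is covered as follows: since $\partial\cG_i=K_{n_i}$ there is an index $\ell'$ with $\{j,\ell,\ell'\}\in\cG_i$ (necessarily $\ell'\in T$), whence $\{u_\ell,u_{\ell'}\}\in L_{\widehat{\cG}_i}(a)[U_0]\subseteq L_{\cH_v}(a)$ and therefore $\{a,u_\ell,u_{\ell'}\}\in\cH$. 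Thus $U$ is a $2$-covered set of size $n_i+1$ for which $\cH[U]\supseteq\cH[W]$ contains a copy of $\cG_i$, contradicting Corollary~\ref{c:1344} and proving the claim. The only genuinely delicate point of the argument is the design of $U$ — the realisation that $v$ should be excluded and that $a$, rather than $v$, plays the role of the $(n_i+1)$-st vertex; once this is set up, everything reduces to routine bookkeeping with the Claims of Part~I and the greedy embedding lemma.
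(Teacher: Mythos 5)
Your proposal is correct and takes essentially the same approach as the paper: you apply Lemma~\ref{LEMMA:greedily-embedding-Gi} with $S$ equal to the two vertices in $V_j$ and $T$ the remaining class indices, and then invoke Corollary~\ref{c:1344} on the resulting $(n_i+1)$-vertex set. The only (cosmetic) difference is that the paper directly concludes $u_0u_1\notin\partial\cH$ for any two vertices $u_0,u_1\in V_j$ — the $(n_i+1)$-set built from the lemma is $2$-covered precisely when $u_0u_1\in\partial\cH$, so Corollary~\ref{c:1344} forces $u_0u_1\notin\partial\cH$ — whereas you phrase it as a contradiction starting from a hypothetical edge $\{v,a,b\}\in\cH$, which yields the same conclusion but proves the slightly weaker statement of the claim rather than the stronger shadow fact the paper records along the way.
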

\begin{proof}[Proof of Claim \ref{CLAIM:Vi-indep-in-Lv}]
Without loss of generality we may assume that $j = 1$.
Let $u_0, u_1\in V_1$ be two distinct vertices. By Lemma \ref{LEMMA:greedily-embedding-Gi}
applied to $S=\{u_0, u_1\}$ and $T=[2, n_i]$ there exist vertices 
$u_j \in V_j$ for $j\in [2,  n_i]$ such that the subgraphs of $\cH$ 
induced by $\{u_0,u_2,\ldots,u_{n_i}\}$
and $\{u_1,u_2,\ldots,u_{n_i}\}$ are isomorphic to $\cG_i$.
Now Corollary~\ref{c:1344} informs us that the set $U=\{u_0, u_1, \ldots, u_{n_t}\}$
cannot be $2$-covered, for which reason $u_0u_1\not\in\partial \cH$.
So, in particular, we have $u_0u_1\not\in L_{\cH}(v)$. 
\end{proof}
%%%%%%%%%%%%%%%%
%%%%%%%%%%%%%%%%
\begin{claim}\label{CLAIM:v-has-few-neighbors-in-some-Vj}
There exists $j \in [n_i]$ such that $|N_{\cH}(v) \cap V_j| < \zeta^{1/7} n$.
\end{claim}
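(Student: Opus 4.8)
The plan is to argue by contradiction. Assume that $|N_{\cH}(v)\cap V_j|\ge\zeta^{1/7}n$ holds for \emph{every} $j\in[n_i]$. I will use this to produce a $2$-covered set $U^+\subseteq V(\cH)$ of size $n_i+1$ whose induced subgraph contains a copy of $\cG_i$, which contradicts Corollary~\ref{c:1344}. The underlying idea is that if $v$ has many neighbours in each part, then one can embed a copy of $\cG_i$ \emph{entirely inside} the neighbourhood of $v$ and then adjoin $v$ to obtain a forbidden configuration.

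First I restrict attention to the neighbourhood of $v$: put $V_j'=N_{\cH}(v)\cap V_j$ for $j\in[n_i]$, so that $|V_j'|\ge\zeta^{1/7}n$ by hypothesis. The key point is that $\cH_v\subseteq\widehat{\cG}_i=\cG_i[V_1,\dots,V_{n_i}]$, and so by~\eqref{eq:4241} the total number of triples of $\widehat{\cG}_i$ absent from $\cH_v$ is at most $2\zeta n^3$; in particular the number of such missing triples having one vertex in each of $V_{j_1}'$, $V_{j_2}'$, $V_{j_3}'$ is at most $2\zeta n^3$ as well. Since the parts $V_j'$ still have polynomial size $\ge\zeta^{1/7}n$, this puts us exactly in the situation of Lemma~\ref{LEMMA:greedily-embedding-Gi}. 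Concretely I apply that lemma to the induced subgraph $\cH\bigl[\bigcup_{j\le n_i}V_j'\bigr]$ with the vertex partition into $V_1',\dots,V_{n_i}'$, with $\cG=\cG_i$, $T=[n_i]$, $S=\emptyset$, and $\eta=\zeta^{3/7}/n_i^3$. Because $\zeta\ll n_t^{-1}$ we have $n_i\eta^{1/3}=\zeta^{1/7}$ and $\eta\ge2\zeta$, so hypothesis~\ref{it:47a} holds thanks to $|V_j'|\ge\zeta^{1/7}n$, hypothesis~\ref{it:47b} holds because the number of missing triples between any three of these parts is at most $2\zeta n^3\le\eta n^3$, and~\ref{it:47c} is vacuous. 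The lemma then yields vertices $u_j\in V_j'$ for $j\in T$ such that the transversal $U=\{u_j:j\in T\}$ satisfies $\widehat{\cG}[U]\subseteq\cH[U]$; since $U$ selects exactly one vertex from each of $V_1',\dots,V_{n_i}'$, the graph $\widehat{\cG}[U]$ is a copy of $\cG_i$, whence $\cH[U]$ contains a copy of $\cG_i$.

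Finally I adjoin $v$ and set $U^+=U\cup\{v\}$, which has size $n_i+1$ because $v\notin\bigcup_{j}V_j'$. Every pair inside $U$ lies in $\partial\cH$ because $\cH[U]$ contains a copy of $\cG_i$ and $\delta_2(\cG_i)\ge7n_i/8\ge1$ by Observation~\ref{OBS:link-Gi}~\ref{it:45c}; every pair $\{v,u_j\}$ lies in $\partial\cH$ because $u_j\in N_{\cH}(v)$. Thus $U^+$ is a $2$-covered set of size $n_i+1$ in the $\cM_t$-free $3$-graph $\cH$, so Corollary~\ref{c:1344} forces $\cH[U^+]$ to be $\cG_i$-free; this contradicts the fact that $\cH[U]\subseteq\cH[U^+]$ contains a copy of $\cG_i$, and the contradiction proves the claim.

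The only delicate point is the parameter bookkeeping in the middle paragraph: one must ensure that the single global bound $2\zeta n^3$ on missing edges remains negligible after restricting to the much smaller sets $V_j'$ and against the constant $\eta$ of Lemma~\ref{LEMMA:greedily-embedding-Gi}. This causes no trouble precisely because the sizes of the $V_j'$ are measured by $\zeta^{1/7}$ rather than $\zeta$, so choosing $\eta$ of order $\zeta^{3/7}$ makes $\eta^{1/3}$ match the part sizes while keeping $\eta\ge 2\zeta$; all the $n_i$ (equivalently $n_t$) factors are absorbed by the hierarchy $\zeta\ll n_t^{-1}$.
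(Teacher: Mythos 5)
Your argument is essentially the same as the paper's: assume for contradiction that $v$ has at least $\zeta^{1/7}n$ neighbours in every part, apply Lemma~\ref{LEMMA:greedily-embedding-Gi} to the restricted parts $W_j=N_{\cH}(v)\cap V_j$ with $T=[n_i]$ and $S=\emptyset$ to find a copy of $\cG_i$ inside $N_\cH(v)$, and then contradict Corollary~\ref{c:1344} via the $2$-covered set $U\cup\{v\}$. One small imprecision in your bookkeeping: when the lemma is applied to the induced subgraph on $\bigcup_j V_j'$, hypothesis~\ref{it:47b} must be checked against $\eta(n')^3$ with $n'=\sum_j|V_j'|\le n$, not $\eta n^3$ (so ``$\eta\ge 2\zeta$'' alone is not the relevant comparison); nevertheless $n'\ge n_i\zeta^{1/7}n$ gives $\eta(n')^3\ge\zeta^{6/7}n^3\ge 2\zeta n^3$, so the conclusion stands.
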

\begin{proof}[Proof of Claim \ref{CLAIM:v-has-few-neighbors-in-some-Vj}]
Suppose for the sake of contradiction that the sets $W_j=N_{\cH}(v) \cap V_j$ satisfy 
$|W_j| \ge \zeta^{1/7} n$ for every $j \in [n_i]$.
Applying Lemma \ref{LEMMA:greedily-embedding-Gi} to $W_j$ here in place of $V_j$ there 
and to $S = \emptyset$, $T = [n_i]$ we obtain vertices 
$u_j \in V_j$ for all $j\in[n_i]$
such that the set $U=\{u_1,\ldots,u_{n_i}\}$
induces a copy of $\cG_i$ in $\cH$.  But now the $2$-covered set $U\cup\{v\}$ contradicts 
Corollary~\ref{c:1344}.
\end{proof}
%%%%%%%%%%%%%%%%

It will turn out later that the index $j$ delivered by 
Claim~\ref{CLAIM:v-has-few-neighbors-in-some-Vj} is unique. 
Without loss of generality we may assume that 
\begin{equation}\label{eq:2354}
|N_{\cH}(v) \cap V_1| < \zeta^{1/7} n.
\end{equation}

\smallskip
\noindent{\bf Part III. The link of $v$.} 
It remains to show that $L_{\cH}(v) \subseteq L_{\widehat{G}_{i}}(V_1)$. To this end we define
\begin{align}
N_{v}(u) = \left\{j\in [n_i]\colon |N_\cH(u, v)\cap V_j| \ge \zeta^{1/7} n\right\} \notag
\end{align}
for every $u \in N_{\cH}(v)$ .
The upper bound on $\Delta_2(\cG_i)$ in Observation~\ref{OBS:link-Gi}~\ref{it:45c} transfers 
to these sets as follows.

\begin{claim}\label{CLAIM:max-deg-at-most-n-k-in-Lv}
We have $|N_{v}(u)| \le n_i-k_i$ for every $u \in N_{\cH}(v)$.
\end{claim}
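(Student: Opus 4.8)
I would argue by contradiction: suppose $|N_v(u)|\ge n_i-k_i+1$ for some $u\in N_{\cH}(v)$, and let $j_0\in[n_i]$ be the index with $u\in V_{j_0}$. The first task is to locate which classes can lie in $N_v(u)$. By Claim~\ref{CLAIM:Vi-indep-in-Lv} the pair $\{u,v\}$ has no common neighbour inside its own class, i.e. $N_{\cH}(u,v)\cap V_{j_0}=\emptyset$, so $j_0\notin N_v(u)$; and since $N_{\cH}(u,v)\subseteq N_{\cH}(v)$, the bound~\eqref{eq:2354} forces $1\notin N_v(u)$ as well. Hence the ``bad'' index set $B=[n_i]\setminus N_v(u)$ contains $\{1,j_0\}$ and has $|B|\le k_i-1$; in particular, since the design edges of $\cD_i$ through $j_0$ partition $[n_i]\setminus\{j_0\}$ into blocks of size $k_i-1$ and $B\setminus\{j_0\}$ meets at most $k_i-2$ of them, there is a design edge $E\in\cD_i$ with $j_0\in E$ and $E\setminus\{j_0\}\subseteq N_v(u)$.

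The core of the argument is to produce, inside $\cH$, a $2$-covered set $U^\star$ of size $n_i+1$ whose induced subhypergraph contains a copy of $\cG_i$, which is impossible by Corollary~\ref{c:1344} (or, in the case $i=t$, by its variant through Lemma~\ref{l:6521}). I would build this set with Lemma~\ref{LEMMA:greedily-embedding-Gi}. The plan is to pre-select $u_{j_0}=u$ and, for each $j\in N_v(u)$, a common neighbour $u_j\in N_{\cH}(u,v)\cap V_j$ (such a choice exists since $|N_{\cH}(u,v)\cap V_j|\ge\zeta^{1/7}n$), put these pre-selected vertices into the set $S$ of Lemma~\ref{LEMMA:greedily-embedding-Gi}, and greedily complete a transversal $U=\{u_j:j\in[n_i]\}$ over the remaining classes. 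Hypothesis~\ref{it:47a} is satisfied once the auxiliary constant $\eta$ is chosen small enough in terms of $n_t$ and $\zeta$; hypothesis~\ref{it:47b} follows from the global bound~\eqref{eq:4241} on the number of missing edges; and hypothesis~\ref{it:47c}, applied to the pre-selected vertices, follows from Claim~\ref{CLAIM:capacity-of-u}. Since $\partial\cG_i=K_{n_i}$, any transversal inducing a copy of $\cG_i$ is automatically $2$-covered, and $v$ is joined in $\partial\cH$ to $u_{j_0}=u$ and to every $u_j$ with $j\in N_v(u)$, because $u_j\in N_{\cH}(u,v)$.

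I expect the main obstacle to be completing $U$ to a genuinely $2$-covered set $U^\star$ of size $n_i+1$ together with $v$: this requires controlling the at most $k_i-2$ exceptional classes $V_j$ with $j\in B\setminus\{j_0\}$, for which~\eqref{eq:2354} (concerning $V_1$) is the delicate point. I would handle this by a case split on the location of $v$. If $1=j_0$, then $U\cup\{v\}$ already works once the greedy step routes the classes in $B\setminus\{j_0\}$ through $N_{\cH}(v)$, which is unproblematic there. If $1\ne j_0$, one instead lets $v$ play the role of class $1$ in the copy of $\cG_i$ — so that no vertex of $V_1$ is needed — and exploits that every edge of $\cG_i$ joining vertex $1$, vertex $j_0$ and a third vertex whose class lies in $N_v(u)$ is automatically realised by a triple of the form $\{u,v,u_j\}$; using the design edge $E$ found above (whose $k_i-1$ vertices other than $j_0$ all lie in $N_v(u)$) one absorbs the few remaining bad indices and closes up a $2$-covered set of size $n_i+1$ containing a copy of $\cG_i$. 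In every case Corollary~\ref{c:1344} yields the required contradiction, so $|N_v(u)|\le n_i-k_i$.
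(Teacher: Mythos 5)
Your approach is genuinely different from the paper's, but it has two gaps that I do not see how to close, and together they make the plan circular.

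\textbf{First gap: the bad classes.} You need to place a vertex $u_j$ in each class with $j\in B\setminus\{j_0\}$ in such a way that $\{v,u_j\}$ is covered in $\cH$, and you propose to ``route these classes through $N_{\cH}(v)$.'' But at this point in the argument the only information available about $|N_{\cH}(v)\cap V_j|$ is~\eqref{eq:2354}, which concerns $j=1$ and gives an \emph{upper} bound. The lower bound $|N_{\cH}(v)\cap V_j|\ge\zeta^{1/7}n$ for $j\in[2,n_i]$ is Claim~\ref{CLAIM:size-Sg-is-ni-1}, which is proved \emph{after} Claim~\ref{CLAIM:max-deg-at-most-n-k-in-Lv} and whose proof uses it. Nor does $N_{\cH}(u,v)\cap V_j$ help, since $j\in B$ means by definition $|N_{\cH}(u,v)\cap V_j|<\zeta^{1/7}n$. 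In the case $j_0\ne 1$, letting $v$ ``play the role of class $1$'' requires that $L_{\cH}(v)$ restricted to $\{u_2,\dots,u_{n_i}\}$ equals $L_{\cG_i}(1)$, which is essentially Claim~\ref{CLAIM:link-v-is-correct}, the final goal of the whole section.

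\textbf{Second gap: Lemma~\ref{LEMMA:greedily-embedding-Gi} with a large $S$.} You put $u$ and all the $u_j$ with $j\in N_v(u)$ into $S$ and complete greedily over $T=B\setminus\{j_0\}$. But the lemma only controls triples entirely inside $T$ and triples with exactly one vertex in $S$ and two in $T$. It says nothing about triples with two or three vertices in $S$, which constitute the bulk of the would-be $\cG_i$-copy once $|S|\ge n_i-k_i+2$. So you cannot conclude that the transversal $U=\{u_j:j\in[n_i]\}$ induces $\cG_i$ in $\cH$. The absorption step via the design edge is too vague to address this.

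The paper avoids both problems by \emph{not} attempting a full $\cG_i$-copy: it uses only the classes $N_\star\subseteq N_v(u)$ (where common neighbours of $u,v$ are plentiful), applies Lemma~\ref{LEMMA:greedily-embedding-Gi} with $S=\emptyset$ restricted to $N_{\cH}(u,v)\cap V_j$, obtains a $2$-covered $F=\cH[U\cup\{u,v\}]$ on $n_i-k_i+3$ vertices, and derives a contradiction from a codegree count: $F$ is forced to be $\cG_i$-colorable (by Lemma~\ref{LEMMA:cannot-embed-large-subgraph} and $\cM_t$-freeness) and $2$-covered, so it embeds into $\cG_i$, but $d_F(u,v)=n_i-k_i+1>\Delta_2(\cG_i)$, contradicting Observation~\ref{OBS:link-Gi}~\ref{it:45c}.
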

\begin{proof}[Proof of Claim \ref{CLAIM:max-deg-at-most-n-k-in-Lv}]
Assume for the sake of contradiction that there is a set $N_\star\subseteq N_v(u)$
such that $|N_\star|=n_i-k_i+1<n_i-2$. 
As in the proof of Claim~\ref{CLAIM:v-has-few-neighbors-in-some-Vj} there exist 
ver\-tices $u_j\in N_\cH(u, v)\cap V_j$ for $j\in N_\star$ such that 
$\cG_i[N_\star]$ is isomorphic to $\cH[U]$, where $U=\{u_j\colon j\in N_\star\}$.

Now we consider the $3$-graph $F=\cH[U\cup \{u, v\}]$. Clearly $U\cup\{u, v\}$
is $2$-covered in $F$ and $\tau(F)\ge \tau(\cG_i[N_\star])\ge 2$. So $F\not\in \cM_t$
tells us that $F$ is $\cG_s$-colorable for some $s\in [t]$. 

On the other hand by Lemma \ref{LEMMA:cannot-embed-large-subgraph} and 
$|U| \ge n_i-k_i+2 > n_i-Q/(2k_i^2)$ the subgraph $F[U]$ of $F$ cannot be 
$\cG_s$-colorable for any $s\in [t]\setminus \{i\}$.  

Summarizing this discussion, $F$ is $\cG_i$-colorable. As $F$ is also $2$-covered,
$F$ is actually isomorphic to a subgraph of $\cG_i$ and, consequently, 
$n_i-k_i<|N_\star|=d_F(u, v)\le \Delta_2(\cG_i)$, contrary to 
Observation~\ref{OBS:link-Gi}~\ref{it:45c}.
\end{proof}
%%%%%%%%%%%%%%%%%%%%
%%%%%%%%%%%%%%%%
\begin{claim}\label{CLAIM:size-Sg-is-ni-1}
We have $|N_{\cH}(v) \cap V_j| \ge \zeta^{1/7} n$ for every $j\in [2, n_i]$.
\end{claim}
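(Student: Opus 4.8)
The plan is to argue by contradiction. Suppose that $|N_{\cH}(v)\cap V_{j}|<\zeta^{1/7}n$ for some $j\in[2,n_i]$; together with the normalisation~\eqref{eq:2354} this means that $v$ has fewer than $\zeta^{1/7}n$ neighbours in each of the two distinct classes $V_1$ and $V_j$. I would then derive a contradiction by double counting $2d_{\cH}(v)=\sum_{u\in N_{\cH}(v)}d_{\cH}(u,v)$, combining an upper bound on $|N_{\cH}(v)|$ coming from these two ``deficient'' classes with the codegree bound already furnished by Claim~\ref{CLAIM:max-deg-at-most-n-k-in-Lv}, and checking that the result is incompatible with the minimum degree hypothesis $\delta(\cH)>(3\lambda_t-\zeta)n^2$.

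First I would bound $|N_{\cH}(v)|$. By Claim~\ref{CLAIM:size-Vi} each of $V_1$ and $V_j$ has at least $n/n_i-5\zeta^{1/2}n$ vertices, and $v$ misses all but fewer than $\zeta^{1/7}n$ of them, so
\[
|N_{\cH}(v)|\le (n-1)-2\Bigl(\tfrac{n}{n_i}-5\zeta^{1/2}n-\zeta^{1/7}n\Bigr)\le \Bigl(1-\tfrac2{n_i}\Bigr)n+12\zeta^{1/7}n.
\]
Next, for every $u\in N_{\cH}(v)$ Claim~\ref{CLAIM:max-deg-at-most-n-k-in-Lv} gives $|N_v(u)|\le n_i-k_i$, and unravelling the definition of $N_v(u)$ together with Claim~\ref{CLAIM:size-Vi} yields
\[
d_{\cH}(u,v)=\sum_{\ell=1}^{n_i}\bigl|N_{\cH}(u,v)\cap V_{\ell}\bigr|\le|N_v(u)|\max_{\ell}|V_{\ell}|+n_i\zeta^{1/7}n\le\Bigl(1-\tfrac{k_i}{n_i}\Bigr)n+6n_i\zeta^{1/7}n.
\]
Multiplying these two estimates gives $2d_{\cH}(v)\le\bigl(1-\tfrac2{n_i}\bigr)\bigl(1-\tfrac{k_i}{n_i}\bigr)n^2+O(n_i\zeta^{1/7})n^2$.

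Finally I would compare this with the minimum degree. Since $Q=n_i/(k_i+1)$, we have $6\lambda_t=1-(k_i+1)/n_i$, so $\delta(\cH)>(3\lambda_t-\zeta)n^2$ forces $2d_{\cH}(v)>\bigl(1-\tfrac{k_i+1}{n_i}-2\zeta\bigr)n^2$. On the other hand
\[
\Bigl(1-\tfrac2{n_i}\Bigr)\Bigl(1-\tfrac{k_i}{n_i}\Bigr)=1-\tfrac{k_i+1}{n_i}-\tfrac1{n_i}\Bigl(1-\tfrac{2k_i}{n_i}\Bigr),
\]
and since $n_i=Q(k_i+1)$ with $Q\ge 2k_t^3$ we have $2k_i/n_i<2/Q\le\tfrac12$, so the two bounds on $2d_{\cH}(v)$ differ by at least $\tfrac1{2n_i}n^2$. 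As $\zeta\ll n_t^{-1}$ we may assume the error $O(n_i\zeta^{1/7})n^2+2\zeta n^2$ is smaller than $\tfrac1{2n_i}n^2$, and this contradiction proves the claim.

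The step I expect to be the main (if minor) obstacle is recognising that one should count through codegrees: the naive alternative of bounding $|L_{\cH}(v)|$ by the largest complete multipartite graph on the $n_i-2$ surviving classes is too weak, because deleting two of the $\Theta(k_iQ)$ classes decreases that quantity only by $O(1/n_i)$, which does not beat the slack $1/Q$ in the degree hypothesis; it is the genuinely smaller codegree bound $d_{\cH}(u,v)\le(1-k_i/n_i)n+o(n)$ from Claim~\ref{CLAIM:max-deg-at-most-n-k-in-Lv} that makes the product of the two factors dip below $6\lambda_t$. After that it is just a matter of keeping the $\zeta$-bookkeeping consistent with the hierarchy $N_0^{-1}\ll\zeta\ll n_t^{-1}$.
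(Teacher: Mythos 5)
Your proof is correct, and it uses essentially the same approach as the paper: both arguments double-count via $2d_{\cH}(v)\le \Delta\bigl(L_{\cH}(v)\bigr)\,|N_{\cH}(v)|$, bound the codegree factor using Claim~\ref{CLAIM:max-deg-at-most-n-k-in-Lv} together with Claim~\ref{CLAIM:size-Vi}, and compare against the minimum degree hypothesis with the identity $6\lambda_t = 1-\tfrac{k_i+1}{n_i}$. The only difference is organizational: the paper deduces a lower bound $|N_{\cH}(v)|\ge(1-\tfrac{3/2}{n_i})n$ and then counts how many classes can receive $\zeta^{1/7}n$ neighbours, whereas you run the contrapositive, assuming some $V_j$ with $j\in[2,n_i]$ is also ``deficient'' (alongside $V_1$) and deriving a contradiction from the resulting upper bound on $|N_{\cH}(v)|$.
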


\begin{proof}[Proof of Claim \ref{CLAIM:size-Sg-is-ni-1}]
The minimum degree condition imposed on $\cH$ and $6\lambda_t=1-\frac{k_i+1}{n_i}$
yield
\begin{align*}
	\left(1-\frac{k_i+1}{n_i}-2\zeta\right)n^2
	=
	2(3\lambda_t-\zeta)n^2
	\le 
	2d_\cH(v)
	\le
	\Delta\bigl(L_\cH(v)\bigr)|N_\cH(v)|.
\end{align*}   
Claim \ref{CLAIM:max-deg-at-most-n-k-in-Lv} allows us to bound the first factor on the right side
from above by  
\begin{align}
\Delta\bigl(L_\cH(v)\bigr)
\le 
(n_i-k_i)\left(\frac{n}{n_i}+5\zeta^{1/2}n\right) + k_i \zeta^{1/7} n
< \frac{n_i-k_i}{n_i}n + 2k_i \zeta^{1/7} n. \notag
\end{align}
Altogether we obtain 
\[
	\frac{n_i-(k_i+1)-2n_i\zeta}{(n_i-k_i)+2k_in_i \zeta^{1/7}}\le \frac{|N_\cH(v)|}n,
\]
which due to 
\[
	\frac{n_i-(k_i+1)}{n_i-k_i}=1-\frac 1{n_i-k_i}>1-\frac{5/4}{n_i}
\]
and $\zeta\ll n_i^{-1}$ implies
\[
	\left(1-\frac{3/2}{n_i}\right)n \le |N_\cH(v)|.
\]
On the other hand, setting
$I = \left\{j\in[2, n_i]\colon|N_{\cH}(v) \cap V_j| \ge \zeta^{1/7} n \right\}$
Claim~\ref{CLAIM:size-Vi} and~\eqref{eq:2354} lead to
\[	
	|N_\cH(v)|
	\le 
	|I|\left(\frac 1{n_i}+5\zeta^{1/2}\right)n+\zeta^{1/7}n_in.
\]
Combining both estimates we arrive at $|I|>n_i-7/4$, whence $I = [2, n_i]$.
\end{proof}
%%%%%%%%%%%%%%%%

\begin{claim}\label{CLAIM:v-has-bo-neighbor-in-V1}
We have $N_{\cH}(v) \cap V_1 = \emptyset$.
\end{claim}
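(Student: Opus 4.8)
The plan is to argue by contradiction. Suppose there is a vertex $u_1\in N_{\cH}(v)\cap V_1$. I would then build an $n_i$-element transversal $U'$ of the classes $V_1,\dots,V_{n_i}$ with $u_1\in U'\subseteq N_{\cH}(v)$ on which $\cH$ induces a copy of $\cG_i$. Granting this, the set $U'\cup\{v\}$ has size $n_i+1$ and is $2$-covered: every pair inside $U'$ is covered because the copy of $\cG_i$ is $2$-covered (indeed $\delta_2(\cG_i)\ge 7n_i/8\ge 1$ by Observation~\ref{OBS:link-Gi}~\ref{it:45c}), and every pair $\{v,u\}$ with $u\in U'$ is covered because $U'\subseteq N_{\cH}(v)$. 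Hence Corollary~\ref{c:1344} applies and tells us that $\cH[U'\cup\{v\}]$ is $\cG_i$-free; but $\cH[U']\subseteq\cH[U'\cup\{v\}]$ contains a copy of $\cG_i$, a contradiction. This proves $N_{\cH}(v)\cap V_1=\emptyset$.

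To construct $U'$ I would invoke the embedding lemma exactly as in the proofs of Claims~\ref{CLAIM:Vi-indep-in-Lv} and~\ref{CLAIM:v-has-few-neighbors-in-some-Vj}, but with the sets $W_j=N_{\cH}(v)\cap V_j$ playing the role of $V_j$. By Claim~\ref{CLAIM:size-Sg-is-ni-1} we have $|W_j|\ge\zeta^{1/7}n$ for all $j\in[2,n_i]$, and $u_1\in W_1$ by our assumption. I would apply Lemma~\ref{LEMMA:greedily-embedding-Gi} with $T=[2,n_i]$, $S=\{u_1\}$ and a parameter $\eta$ chosen to be a small fixed power of $\zeta$, say $\eta=7n_i\zeta^{1/2}$. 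Hypothesis~\ref{it:47a} then holds because $(|S|+1)|T|\eta^{1/3}n=2(n_i-1)(7n_i)^{1/3}\zeta^{1/6}n\le\zeta^{1/7}n\le|W_j|$ once $\zeta\ll n_t^{-1}$; hypothesis~\ref{it:47b} holds because the number of edges of $\widehat{\cG}_i$ missing from $\cH_v$ is at most $2\zeta n^3\le\eta n^3$ by~\eqref{eq:4241}; and hypothesis~\ref{it:47c} holds because the number of missing pairs of $u_1$ is less than $7\zeta^{1/2}n_in^2=\eta n^2$ by Claim~\ref{CLAIM:capacity-of-u}. The lemma then produces vertices $u_j\in W_j\subseteq V_j$ for $j\in[2,n_i]$ such that $U=\{u_j:j\in[2,n_i]\}$ satisfies $\widehat{\cG}_i[U]\subseteq\cH[U]$ and $L_{\widehat{\cG}_i}(u_1)[U]\subseteq L_{\cH}(u_1)[U]$. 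Since $\cH_v\subseteq\widehat{\cG}_i$ and $\{u_1\}\cup U\subseteq V(\cH)\setminus\{v\}$, both inclusions are in fact equalities, so $U'=\{u_1\}\cup U$ induces in $\cH$ a copy of $\cG_i$ with $u_1$ occupying the role of the first class. As $U'\subseteq N_{\cH}(v)$ by construction, the contradiction above is reached.

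I do not expect a genuine obstacle here: the argument is a routine variant of the two preceding claims, and the only point that needs care is the simultaneous bookkeeping for hypotheses~\ref{it:47a}--\ref{it:47c} of Lemma~\ref{LEMMA:greedily-embedding-Gi}. Condition~\ref{it:47a} forces $\eta$ to be small compared to $\zeta^{3/7}/n_i^{3}$ (this is where the lower bound $|W_j|\ge\zeta^{1/7}n$ from Claim~\ref{CLAIM:size-Sg-is-ni-1} enters), while conditions~\ref{it:47b} and~\ref{it:47c} force $\eta$ to be at least of order $\zeta$ and of order $\zeta^{1/2}n_i$ respectively; the choice $\eta=7n_i\zeta^{1/2}$ reconciles these requirements precisely because of the hierarchy $N_0^{-1}\ll\zeta\ll n_t^{-1}$ fixed at the start of the subsection.
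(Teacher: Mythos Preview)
Your proof is correct and follows essentially the same approach as the paper's: assume $u_1\in N_{\cH}(v)\cap V_1$, apply Lemma~\ref{LEMMA:greedily-embedding-Gi} with $S=\{u_1\}$, $T=[2,n_i]$ and the classes $N_{\cH}(v)\cap V_j$ (available by Claim~\ref{CLAIM:size-Sg-is-ni-1}) to build a transversal inducing $\cG_i$ inside $N_{\cH}(v)$, and then invoke Corollary~\ref{c:1344} on the resulting $2$-covered $(n_i+1)$-set. The paper presents this in two sentences, leaving the verification of hypotheses~\ref{it:47a}--\ref{it:47c} implicit, whereas you have spelled out the choice of $\eta$ and the bookkeeping; both arguments are the same in substance.
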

\begin{proof}[Proof of Claim \ref{CLAIM:v-has-bo-neighbor-in-V1}]
Suppose that there exists $u_1 \in N_{\cH}(v) \cap V_1$.
Owing Claim~\ref{CLAIM:size-Sg-is-ni-1} we can apply Lemma \ref{LEMMA:greedily-embedding-Gi} 
with $S = \{u_1\}$ and  $T = [2, n_i]$ in order to obtain vertices 
$u_j \in N_{\cH}(v) \cap V_j$ for $j\in [2, n_i]$ such that $\cH$ induces a copy 
of $\cG_i$ on $U=\{u_1,\ldots,u_{n_i}\}$. Since $U\cup\{v\}$ is $2$-covered, this 
contradicts Corollary~\ref{c:1344}. 
\end{proof}
%%%%%%%%%%%%%%%%
Let us recall that $L_{\widehat{\cG}_i}(V_1)$ denotes the common $\widehat{\cG}_i$-link of 
all vertices in $V_1$.

%%%%%%%%%%%%%%%%
\begin{claim}\label{CLAIM:link-v-is-correct}
We have $L_{\cH}(v) \subseteq L_{\widehat{\cG}_i}(V_1)$.
\end{claim}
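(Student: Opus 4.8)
The plan is to show that every pair $\{x,y\}\in L_{\cH}(v)$ is an edge of $L_{\widehat{\cG}_i}(V_1)$, i.e.\ that $x$ and $y$ lie in distinct classes $V_a,V_b$ with $a,b\in[2,n_i]$ and $ab\in L_{\cG_i}(1)$. First, by Claim~\ref{CLAIM:v-has-bo-neighbor-in-V1} every neighbour of $v$ lies in $V_2\cup\dots\cup V_{n_i}$, so $x\in V_a$ and $y\in V_b$ for some $a,b\in[2,n_i]$, and by Claim~\ref{CLAIM:Vi-indep-in-Lv} we have $a\neq b$. It remains to rule out the possibility that $ab\notin L_{\cG_i}(1)$, equivalently that $\{1,a,b\}\notin\cG_i$. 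So assume for contradiction that $x\in V_a$, $y\in V_b$, $xy\in L_{\cH}(v)$, but $\{1,a,b\}\notin\cG_i$.

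The idea is then to build a $2$-covered set $U\cup\{v\}$ of size $n_i+1$ inside $\cH$ in such a way that $\cH[U\cup\{v\}]$ contains a copy of $\cG_i$ for which $v$ plays the role of the deleted vertex $1$ --- but with one extra edge, $\{v,x,y\}$, beyond $\cG_i$, which forces a contradiction. Concretely, I would set $u_a=x$, $u_b=y$, and use Claim~\ref{CLAIM:size-Sg-is-ni-1} together with Lemma~\ref{LEMMA:greedily-embedding-Gi}, applied with $S=\{x,y\}$ (or with $S=\{v\}$ regarding $v$ as an external vertex whose link we control) and $T=[2,n_i]\setminus\{a,b\}$, choosing vertices $u_j\in N_{\cH}(v)\cap V_j$ for the remaining indices so that $U=\{u_2,\dots,u_{n_i}\}$ induces in $\cH$ a copy of $\cG_i-1$ (the link of the vertex $1$), and moreover so that every $u_j$ with $j\in[2,n_i]$ is a $\cH$-neighbour of $v$ in the correct pattern, i.e.\ $u_ju_{j'}\in L_{\cH}(v)$ whenever $\{1,j,j'\}\in\cG_i$. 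This is exactly the kind of ``prescribed partial transversal plus neighbourhood conditions'' that Lemma~\ref{LEMMA:greedily-embedding-Gi} is designed to handle: hypothesis~\ref{it:47a} holds because $|V_j|\sim n/n_i$ (Claim~\ref{CLAIM:size-Vi}) while $|S|,|T|,\eta^{-1}$ are bounded in terms of $n_i$ and $\zeta$, hypotheses~\ref{it:47b} and~\ref{it:47c} hold because $\cH_v\subseteq\widehat{\cG}_i$ has only $\le 2\zeta n^3$ missing edges by~\eqref{eq:4241} and each vertex has $\le 7\zeta^{1/2}n_in^2$ missing pairs by Claim~\ref{CLAIM:capacity-of-u}, and the neighbourhood-of-$v$ conditions use that $|N_{\cH}(v)\cap V_j|\ge\zeta^{1/7}n$ for $j\in[2,n_i]$.

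Having produced such a $U$, consider $F=\cH[U\cup\{v\}]$. By construction $U\cup\{v\}$ is $2$-covered (every pair inside $U$ is covered since $U$ induces a copy of $\cG_i-1$ which is $2$-covered by Observation~\ref{OBS:link-Gi}\ref{it:45c}, and every pair $\{v,u_j\}$ is covered because $u_j\in N_{\cH}(v)$), it has $n_i+1$ vertices, and $\tau(F[U])\ge\tau(\cG_i-1)\ge 2$ (the link graph of a vertex of $\cG_i$ contains a triangle, by the clique-number bound of Observation~\ref{OBS:link-Gi}\ref{it:45a} and $n_i\gg k_i$, hence cannot be covered by one vertex). By Corollary~\ref{c:1344}, $\cH[U\cup\{v\}]$ is $\cG_i$-free. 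On the other hand, the subgraph of $F$ consisting of $v$ together with the pairs $u_ju_{j'}$ with $\{1,j,j'\}\in\cG_i$ is precisely a copy of $\cG_i$, with $v\mapsto 1$ and $u_j\mapsto j$; and the extra edge $\{v,x,y\}=\{v,u_a,u_b\}$ lies in $F$ as well since $xy\in L_{\cH}(v)$. Thus $F$ contains a copy of $\cG_i$ --- contradicting that it is $\cG_i$-free. This contradiction shows $\{1,a,b\}\in\cG_i$, i.e.\ $xy\in L_{\widehat{\cG}_i}(V_1)$, and since $\{x,y\}\in L_{\cH}(v)$ was arbitrary we conclude $L_{\cH}(v)\subseteq L_{\widehat{\cG}_i}(V_1)$.

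The main obstacle is the bookkeeping in the application of Lemma~\ref{LEMMA:greedily-embedding-Gi}: one must simultaneously force $U$ to realise the link $\cG_i-1$ (so that edges present in $\cG_i-1$ are present in $\cH$) \emph{and} force the pattern of $\cH$-edges through $v$ to agree with $\cG_i$ on $U$ \emph{and} include the offending edge $\{v,x,y\}$; encoding all of this as a single instance of that lemma (with $S$ capturing $v$ via its link, or by running the lemma once and then checking the $v$-incidences separately using the density bound~\eqref{eq:4241}) requires care, and one has to double-check that the resulting copy of $\cG_i$ sits inside $F$ rather than merely inside some blow-up. Everything else is a routine verification that the parameter inequalities of Lemma~\ref{LEMMA:greedily-embedding-Gi} are satisfied under the hierarchy $N_0^{-1}\ll\zeta\ll n_t^{-1}$.
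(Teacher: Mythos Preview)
There is a genuine gap in your argument: you cannot arrange that $u_ju_{j'}\in L_{\cH}(v)$ whenever $\{1,j,j'\}\in\cG_i$. To get this out of Lemma~\ref{LEMMA:greedily-embedding-Gi} you would need hypothesis~\ref{it:47c} for $v$, i.e.\ a lower bound of the form $|L_{\cH}(v)[V_{j},V_{j'}]|\ge |V_j||V_{j'}|-\eta n^2$ for each pair $\{j,j'\}$ with $\{1,j,j'\}\in\cG_i$. But nothing proved so far gives such pairwise control over the link of $v$: all we know is the global bound $d_{\cH}(v)\ge(3\lambda_t-\zeta)n^2$, together with Claims~\ref{CLAIM:Vi-indep-in-Lv}--\ref{CLAIM:v-has-bo-neighbor-in-V1}. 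It is entirely consistent with this information that $L_{\cH}(v)[V_j,V_{j'}]$ is empty for some particular pair with $\{1,j,j'\}\in\cG_i$, compensated by edges in pairs with $\{1,j,j'\}\notin\cG_i$ --- indeed, that is precisely the scenario the claim is meant to exclude. Your fallback of ``checking the $v$-incidences separately using the density bound~\eqref{eq:4241}'' does not help either, because~\eqref{eq:4241} bounds $|\widehat{\cG}_i\setminus\cH_v|$, which concerns edges \emph{not} through $v$. (A secondary issue: $|U\cup\{v\}|=n_i$, not $n_i+1$, so Corollary~\ref{c:1344} does not apply; one would instead argue directly via Definition~\ref{d:1151} and an edge count, but this point is moot given the main gap.)

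The paper's proof sidesteps this obstacle by not trying to realise $v$ as vertex~$1$ at all. Instead it selects a genuine vertex $u_1\in V_1$, whose link in $\cH$ \emph{is} known to be close to $L_{\widehat{\cG}_i}(V_1)$ by Claim~\ref{CLAIM:capacity-of-u}, and applies Lemma~\ref{LEMMA:greedily-embedding-Gi} with $S=\{u_2,u_3\}$ and $T=\{1,4,\ldots,n_i\}$ to build $U=\{u_1,\ldots,u_{n_i}\}$ with $d_{\cH[U]}(u_1)\ge d_{\cG_i}(1)$. The vertex $v$ enters only through the single offending edge $vu_2u_3$ and through witness edges $e_j$ certifying $u_j\in N_{\cH}(v)$; no density information about $L_{\cH}(v)$ between specific classes is needed. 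The contradiction then comes from a homomorphism argument: any $\cG_i$-coloring $\phi$ of the resulting $F$ must send $v$ and $u_1$ to the same vertex, whereupon the extra edge $vu_2u_3$ forces $d_{\cG_i}(\phi(u_1))\ge d_{\cH[U]}(u_1)+1>d_{\cG_i}(1)$, contradicting the regularity of $\cG_i$.
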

\begin{proof}[Proof of Claim \ref{CLAIM:link-v-is-correct}]
Due to the Claims~\ref{CLAIM:Vi-indep-in-Lv} and~\ref{CLAIM:v-has-bo-neighbor-in-V1}
we know that $L_{\cH}(v)$ is an $(n_i-1)$-partite graph with vertex classes $V_2, \dots, V_{n_i}$.
So if Claim~\ref{CLAIM:link-v-is-correct} fails we may assume without loss of 
generality $123\not\in \cG_i$ and that there exists a pair $u_2u_3\in L_{\cH}(v)$ 
with $u_2\in V_2$, $u_3\in V_3$. 

Since $|V_1| > n/(2n_i)$ and $|N_{\cH}(v) \cap V_j| \ge \zeta^{1/7} n$ for $j\in [4, n_i]$,
Lemma \ref{LEMMA:greedily-embedding-Gi} applied to $S = \{u_2,u_3\}$ and $T = \{1,4,\ldots,n_i\}$ 
delivers vertices $u_1\in V_1$ and $u_j \in N_{\cH}(v) \cap V_j$
for $j\in [4, n_i]$ such that the set $U' = \{u_1,u_4,\ldots,u_{n_i}\}$
satisfies
\begin{equation}\label{eq:4003}
\cH[U'] = \widehat{\cG}_i[U']
\quad{\rm and}\quad
L_{\cH}(u_{\ell})[U'] = L_{\widehat{\cG}_{i}}(u_{\ell})[U'] \quad{\rm for}\quad\ell = 2,3. 
\end{equation}
Consider the set $U=\{u_1, \ldots, u_{n_i}\}$. Because of~\eqref{eq:4003} and $123\not\in \cG_i$
the map $i\longmapsto u_i$ is an embedding of $L_{\cG_i}(1)$ into $\cH$ and for this reason 
we have 
\begin{equation}\label{eq:4004}
	d_{\cH[U]}(u_1)\ge d_{\cG_i}(1).
\end{equation}

Next we choose for every $j\in [4, n_i]$ an edge $e_j\in \cH$ such that $u_j, v\in e_j$ 
and observe that~$U$ is $2$-covered in the $3$-graph
\begin{align}
F = \{vu_2u_3\} \cup \{e_j\colon 4\le j \le n_i\} \cup \cH[U]. \notag
\end{align}
Moreover, $|F| \le |\cG_i| + n_i-2 < \binom{n_i}{3}$ implies $F \in \widehat{\cK}_{n_i}^3$.
Since $F[U']=\cH[U']$ is isomorphic to $\cG_i-\{2, 3\}$, 
Lemma~\ref{LEMMA:cannot-embed-large-subgraph}
tells us that $F$ cannot be $\cG_j$-colorable for any $j\in [t]\setminus\{i\}$. But on the other
hand we have $\tau(F[U])\ge 2$ and $F\not\in\cM_{t}$, so altogether $F$ is $\cG_i$-colorable. 

Fix a homomorphism $\phi\colon V(F) \longrightarrow V(\cG_i)$ from $F$ to $\cG_i$. Since $U$ and 
$U_v=U\cup\{v\}\setminus \{u_1\}$ are $2$-covered subsets of $F$ whose size is $n_i=v(\cG_i)$,
the map $\phi$ has to be bijective on $U$ and $U_v$, which is only possible if $\phi(v)=\phi(u_1)$.
Now $\phi$ embeds the link $L_{F[U]}(u_1)$ into the link $L_{\cG_i}(\phi(u_1))$. Moreover, 
$vu_2u_3\in F$ implies that $\phi(u_2)\phi(u_3)$ belongs to the link $L_{\cG_i}(\phi(u_1))$
as well and by $123\not\in \cG_i$ this edge is not in the image $\phi(L_{F[U]}(u_1))$. 
Altogether this proves $d_{F[U]}(u_1)+1\le d_{\cG_i}(\phi(u_1))$, which in view of $F[U]=\cH[U]$
and~\eqref{eq:4004} contradicts the regularity of $\cG_i$.
\end{proof}
%%%%%%%%%%%%%%%%

By Claim~\ref{CLAIM:link-v-is-correct}
the partition $\bigcup_{j\in[n_i]}\widehat{V}_j$, where 
\[
	\widehat{V}_j = 
	\begin{cases}
		V_1\cup\{v\} & \text{ if } j=1 \cr
		V_j & \text{ if } 2\le j \le n_i
	\end{cases}
\]
is a $\cG_i$-coloring of $\cH$.
This completes the proof of Lemma \ref{LEMMA:extend-coloring}.

\section{Feasible region of \texorpdfstring{$\cM_{t}$}{M} and \texorpdfstring{$\xi(\cM_t)$}{xi(M)}} 
\label{subsection:feasible}
 
We prove Theorem~\ref{THM:feasible-region-Mt} and that $\xi(\cM_t) = t$ in this section.
First, let us show a simple lemma.

\begin{lemma}\label{LEMMA:dense-Gi-colorable-3gp-shadow}
Suppose that $\cH$ is an $n$-vertex $\cG_i$-colorable $3$-graph for some $i\in [t]$. 
If $|\cH|\ge (\lambda_t-\epsilon)n^3$, 
then $|\partial\cH| \ge \bigl(\frac{n_i-1}{2n_i}-3\epsilon^{1/2}n_i\bigr)n^2$.
\end{lemma}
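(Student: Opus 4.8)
The plan is to exploit that $\cG_i$-colourability of $\cH$, combined with $|\cH|$ being close to the maximum, forces the colour classes to be nearly balanced and forces the shadow $\partial\cH$ to be nearly a complete multipartite graph. So first I would fix a $\cG_i$-colouring $V(\cH)=\bigdcup_{j\in[n_i]}V_j$, so that $\cH\subseteq\widehat\cG_i:=\cG_i[V_1,\dots,V_{n_i}]$. Since $\widehat\cG_i$ is a blow-up of $\cG_i$ on $n$ vertices, Lemma~\ref{LEMMA:blowup-lagrangian} and \eqref{eq:lambdat} give $|\widehat\cG_i|\le\lambda(\cG_i)n^3=\lambda_t n^3$, hence $|\widehat\cG_i\setminus\cH|\le\epsilon n^3$. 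Because $\cG_i$ is $2$-covered (by Observation~\ref{OBS:link-Gi}~\ref{it:45c} we have $\delta_2(\cG_i)\ge 7n_i/8\ge 1$), the shadow $\partial\widehat\cG_i$ is exactly the complete $n_i$-partite graph with parts $V_1,\dots,V_{n_i}$, so $|\partial\widehat\cG_i|=\tfrac12\bigl(n^2-\sum_{j\in[n_i]}|V_j|^2\bigr)$. Everything then reduces to two estimates: an upper bound on $\sum_j|V_j|^2$, and an upper bound on the number of pairs of $\partial\widehat\cG_i$ missing from $\partial\cH$.

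We may assume $3\epsilon^{1/2}n_i<\tfrac{n_i-1}{2n_i}$, since otherwise the asserted bound is non-positive; this forces $\epsilon^{1/2}<\tfrac1{6n_i}$, hence $3n_i\epsilon^{1/2}<\tfrac12$ and all the $V_j$ non-empty (as seen below). For the balance estimate I would argue exactly as in Claim~\ref{CLAIM:size-Vi}: with $x_j=|V_j|/n$ we have $(x_j)_j\in\Delta_{n_i-1}$ and, by Proposition~\ref{prop:lagrange} applied to $\cG_i$,
\[
 (\lambda_t-\epsilon)n^3\le|\cH|\le|\widehat\cG_i|=L_{\cG_i}(x_1,\dots,x_{n_i})\,n^3\le\Bigl(\lambda_t-\tfrac19\sum_{j\in[n_i]}\bigl(x_j-\tfrac1{n_i}\bigr)^2\Bigr)n^3,
\]
so $|x_j-1/n_i|\le 3\epsilon^{1/2}$ for every $j$; in particular $\min_j|V_j|\ge\bigl(\tfrac1{n_i}-3\epsilon^{1/2}\bigr)n\ge\tfrac{n}{2n_i}>0$ and $\sum_j|V_j|^2\le\bigl(\tfrac1{n_i}+9n_i\epsilon\bigr)n^2$, giving $|\partial\widehat\cG_i|\ge\bigl(\tfrac{n_i-1}{2n_i}-\tfrac92 n_i\epsilon\bigr)n^2$.

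For the second estimate, note that if $\{u,w\}\in\partial\widehat\cG_i\setminus\partial\cH$ with $u\in V_j$, $w\in V_\ell$ and $j\ne\ell$, then \emph{every} edge of $\widehat\cG_i$ through $\{u,w\}$ must lie in $\widehat\cG_i\setminus\cH$, while the number of such edges is at least $d_{\cG_i}(j,\ell)\cdot\min_m|V_m|\ge\tfrac{7n_i}{8}\bigl(\tfrac1{n_i}-3\epsilon^{1/2}\bigr)n\ge\tfrac7{16}n$ by Observation~\ref{OBS:link-Gi}~\ref{it:45c}. Counting incidences between the pairs of $\partial\widehat\cG_i\setminus\partial\cH$ and the at most $\epsilon n^3$ edges of $\widehat\cG_i\setminus\cH$ that contain them, and noting each such edge meets at most three pairs, we get $|\partial\widehat\cG_i\setminus\partial\cH|\cdot\tfrac7{16}n\le 3\epsilon n^3$, i.e. $|\partial\widehat\cG_i\setminus\partial\cH|\le 7\epsilon n^2$. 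Combining the two estimates,
\[
 |\partial\cH|\ge|\partial\widehat\cG_i|-|\partial\widehat\cG_i\setminus\partial\cH|\ge\Bigl(\tfrac{n_i-1}{2n_i}-\tfrac92 n_i\epsilon-7\epsilon\Bigr)n^2\ge\Bigl(\tfrac{n_i-1}{2n_i}-3\epsilon^{1/2}n_i\Bigr)n^2,
\]
where the last step uses $\tfrac92\epsilon^{1/2}+\tfrac{7\epsilon^{1/2}}{n_i}\le 3$, valid since $\epsilon^{1/2}\le\tfrac1{6n_i}\le\tfrac16$.

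The conceptual content — that $\partial\widehat\cG_i$ is complete multipartite, and that a pair missing from $\partial\cH$ forces an entire book of blow-up edges to be missing from $\cH$ — is short. The only mildly delicate point is the error bookkeeping: one must check that both error terms, the $O(n_i\epsilon n^2)$ coming from imbalance of the classes and the $O(\epsilon n^2)$ coming from missing pairs, are dominated by $3\epsilon^{1/2}n_i n^2$ for all admissible $\epsilon$, which is precisely why the trivial range $3\epsilon^{1/2}n_i\ge\tfrac{n_i-1}{2n_i}$ has to be dispatched at the outset.
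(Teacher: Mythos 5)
Your proof is correct and follows essentially the same route as the paper's: fix the $\cG_i$-colouring, use Proposition~\ref{prop:lagrange} to show the classes are nearly balanced, note that $\partial\widehat{\cG}_i$ is complete multipartite and count the missing cross-class pairs by a codegree/double-counting argument via $\delta_2(\cG_i)\ge 7n_i/8$. The only cosmetic difference is that you compute $|\partial\widehat{\cG}_i|$ exactly via $\tfrac12(n^2-\sum_j|V_j|^2)$ where the paper uses the cruder lower bound $\binom{n_i}{2}(\min_j|V_j|)^2$, and you explicitly dispatch the trivial range $3\epsilon^{1/2}n_i\ge\frac{n_i-1}{2n_i}$ up front; the substance is the same.
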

\begin{proof}[Proof of Lemma~\ref{LEMMA:dense-Gi-colorable-3gp-shadow}]
Let $V(\cH) = \bigcup_{j\in[n_i]}V_j$ be a $\cG_i$-coloring of $\cH$.
Now by Proposition~\ref{prop:lagrange}, $|V_j| = (1/n_i\pm 3\epsilon^{1/2})n$ for all $j\in[n_i]$.
Call a pair $\{u,v\}$ with $u\in V_{j}, v\in V_{k}$ and $j \neq k$ missing if $uv \not\in \partial\cH$,
and let $M$ denote the set of all missing pairs.
Since $\delta_2(\cG_i) \ge 7n_i/8$, we obtain
\begin{align}
|M| \cdot \frac{7n_i}8\cdot \left(\frac{1}{n_i}-3\epsilon^{1/2}\right)n
\le 3\epsilon n^3, \notag
\end{align}
which yields $|M| < 4\epsilon n^2$.
Therefore,
\[
|\partial\cH|
> \binom{n_i}{2} \times \left(\frac{1}{n_i}-3\epsilon^{1/2}\right)^2n^2 - |M|
> \frac{n_i-1}{2n_i} n^2 - 3\epsilon^{1/2}n_i n^2. \qedhere
\]
\end{proof}

We remark that the stronger conclusion 
$|\partial\cH| \ge \bigl(\frac{n_i-1}{2n_i}-5\epsilon n_i\bigr)n^2$
could be shown by arguing more carefully, but this is immaterial to what follows.  

\begin{proof}[Proof of Theorem~\ref{THM:feasible-region-Mt}]
Recall from Section~\ref{SEC:the-extremal-configurations} that semibipartite $3$-graphs 
are $\cM_t$-free. 
This yields ${\rm proj}\Omega(\cM_t) = [0,1]$, 
as for every $x\in [0, 1]$ there exists a good sequence of semibipartite $3$-graphs 
such that the edge densities of their shadows converges to $x$. 

Theorem~\ref{THM:t-stability}~\ref{it:11a} implies that $g(\cM_t,x) \le 6\lambda_t$ 
for all $x\in[0,1]$. Furthermore for every $i\in [t]$ the sequence of balanced blow-ups of $\cG_i$ 
shows the equality $g(\cM_t, 1-1/n_i) = 6\lambda_t$.  
So, in order to finish the proof
it suffices to show that if some $x\in [0, 1]$ satisfies
$g(\cM_t,x) = 6\lambda_t$, 
then there is an index $i\in [t]$ such that $x = 1-1/n_i$.

Fix such an $x \in [0,1]$ and let $\left(\cH_n\right)_{n=1}^{\infty}$ be a good sequence 
of $\cM_t$-free $3$-graphs realizing $(x,6\lambda_t)$.
Consider an arbitrary $\delta >0 $ and let $\epsilon > 0, N_0$ be the 
constants guaranteed 
by Theorem~\ref{THM:t-stability}~\ref{it:11b}. Without loss of generality we may assume 
$\eps\le \delta$. 
By our choice of~$\left(\cH_n\right)_{n=1}^{\infty}$ there exists $n_0\in\NN$ such that
\begin{align}
d(\cH_n) = 6\lambda_t \pm \epsilon
\quad{\rm and}\quad
d(\partial\cH_n) = x \pm \epsilon \notag
\end{align}
hold for all $n \ge n_0$.
By Theorem~\ref{THM:t-stability}~\ref{it:11b},
for every $n \ge \max\{n_0,N_0\}$ the $3$-graph $\cH_n$ is $\cG_i$-colorable for some $i=i(n)\in [t]$
after removing at most $\delta v(\cH_n)$ vertices.
Therefore,
\begin{align}
|\partial\cH_n| \le \left(\frac{n_i-1}{2n_i} + \delta \right)v(\cH_n)^2, \notag
\end{align}
and, on the other hand, by Lemma~\ref{LEMMA:dense-Gi-colorable-3gp-shadow},
\begin{align}
|\partial\cH_n|
> \left(\frac{n_i-1}{2n_i}-3\epsilon^{1/2}n_i\right)(1-\delta)^2v(\cH_n)^2 \notag
> \frac{n_i-1}{2n_i}v(\cH_n)^2 - \left(3\epsilon^{1/2}n_i+2\delta\right)v(\cH_n)^2. \notag
\end{align}
Summarizing and taking $\eps\le \delta$ into account we arrive at 
\begin{equation}\label{eq:5343}
\frac{n_i-1}{n_i}-\left(6\delta^{1/2}n_t+4\delta\right)
< d(\partial\cH_n)
\le \frac{n_i-1}{n_i}+ 2\delta,  
\end{equation}
where, let us recall, $i=i(n)$ might depend on $n$. So what~\eqref{eq:5343} means 
is that if we set 
\[
	I_i(\delta)=\left[\frac{n_i-1}{n_i}-6\delta^{1/2}n_i-4\delta, \frac{n_i-1}{n_i}+2\delta\right]
\] 
for every $i\in [t]$, then 
\[
	d(\partial\cH_n)\in I_1(\delta)\cup\dots\cup I_t(\delta)
\]
holds for every $n\ge n_0$. As the set on the right side is closed we obtain 
\[
	x\in  I_1(\delta)\cup\dots\cup I_t(\delta)
\]
in the limit $n\to\infty$. Since $\delta > 0$ was arbitrary, 
\[
	x\in \bigcap_{\delta>0} \bigl(I_1(\delta)\cup\dots\cup I_t(\delta)\bigr)
	=
	\bigl\{1-1/n_i\colon i\in [t]\bigr\}
\]
follows. 
\end{proof}

Recall that we already proved that $\cM_t$ is $t$-stable,
which, by definition, shows that $\xi(\cM_t) \le t$.
Therefore, in order to prove $\xi(\cM_t) = t$ it suffices to show that $\xi(\cM_t) \ge t$,
and this is an easy consequence of the following proposition and Theorem~\ref{THM:feasible-region-Mt}.

\begin{proposition}\label{PROP:number-of-maxima-and-stability-number}
Let $\cF$ be a family of $r$-graphs and let $M$ be the set of global maxima of~$g(\cF)$.
If $M$ is finite, then $|M| \le \xi(\cF)$.
\end{proposition}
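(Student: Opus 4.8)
The plan is to argue by contradiction: use $t$-stability to attach to each global maximum of $g(\cF)$ an ``extremal type'' in $[t]$, and then show that distinct maxima receive distinct types.

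First some reductions. If $\xi(\cF)=\infty$ there is nothing to prove, so we may assume $\cF$ is $t$-stable with $t=\xi(\cF)$, witnessed by $r$-graphs $\cG_1(m),\dots,\cG_t(m)$. Set $\rho=\max_x g(\cF)(x)$. A routine computation gives $\rho=\pi(\cF)$, since every $\cF$-free $r$-graph $\cH$ satisfies $d(\cH)\le{\rm ex}(v(\cH),\cF)/\binom{v(\cH)}{r}$ while near-extremal $\cF$-free $r$-graphs attain $d(\cH)\to\pi(\cF)$. The degenerate case $\rho=0$ is trivial, as then $g(\cF)\equiv0$ on ${\rm proj}\,\Omega(\cF)=[0,c(\cF)]$, so $M=[0,c(\cF)]$ and finiteness forces $|M|\le1\le\xi(\cF)$; hence assume $\rho>0$. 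Write $M=\{x_1,\dots,x_k\}$; the goal is $k\le t$. For each $j\in[k]$ fix a sequence $(\cH^{j}_\ell)_\ell$ of $\cF$-free $r$-graphs with $v(\cH^{j}_\ell)\to\infty$, $d(\cH^{j}_\ell)\to\rho$ and $d(\partial\cH^{j}_\ell)\to x_j$; because ${\rm ex}(n,\cF)/\binom{n}{r}\to\rho$, for every $\epsilon>0$ we have $|\cH^{j}_\ell|>(1-\epsilon){\rm ex}(v(\cH^{j}_\ell),\cF)$ once $\ell$ is large.

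Now invoke stability with a shrinking tolerance. For $p\in\NN$ apply the $t$-stability hypothesis with $\delta=1/p$, obtaining $\epsilon_p>0$ and a threshold; for $\ell$ large (depending on $j$ and $p$) the $r$-graph $\cH^{j}_\ell$ is then $(1/p)$-edit-close to $\cG_i(v(\cH^{j}_\ell))$ for some $i\in[t]$. Diagonalising over $p$ and using that $[t]$ is finite, we extract for each $j$ an index $i_j\in[t]$ and a subsequence, still written $(\cH^{j}_\ell)_\ell$, along which $\cH^{j}_\ell$ is $\delta_\ell$-edit-close to $\cG_{i_j}(v(\cH^{j}_\ell))$ with $\delta_\ell\to0$, while $v(\cH^{j}_\ell)\to\infty$ and $d(\partial\cH^{j}_\ell)\to x_j$.

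It remains to prove that $j\mapsto i_j$ is injective, for then $k\le t$ and we are done. Suppose instead $i_j=i_{j'}=i$ for some $j\ne j'$; we aim to force $x_j=x_{j'}$. After passing to suitable subsequences and, if necessary, adjoining a bounded number of isolated vertices, one arranges a common vertex count $n$ (for infinitely many $n$) that simultaneously carries an $\cF$-free near-extremal $r$-graph $\cA_n$ with $d(\partial\cA_n)\to x_j$ and one $\cB_n$ with $d(\partial\cB_n)\to x_{j'}$, each edit-close to $\cG_i(n)$ and hence to each other, differing in $o\!\left(\binom{n}{r}\right)$ edges; one then shows $d(\partial\cA_n)$ and $d(\partial\cB_n)$ tend to the same limit, so $x_j=x_{j'}$. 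This reconciliation of edge-edit distance with shadow density is the main obstacle: a small edge-edit can in principle alter the shadow drastically, so bare edit distance does not suffice, and one must bring in extra structural information about near-extremal $\cF$-free configurations — e.g. that, after discarding a few low-degree vertices, every $(r-1)$-set in the shadow has codegree linear in $n$, so that the shadow is stable under a sublinear change of the edge set — and one must be careful to match the two families on a common vertex count before comparing. Granting this, $x_j=x_{j'}$ is a contradiction, so $j\mapsto i_j$ is injective and $|M|\le\xi(\cF)$.
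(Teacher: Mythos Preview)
Your argument has a genuine gap at the injectivity step, and you yourself flag it: from the fact that $\cA_n$ and $\cB_n$ differ in $o(n^r)$ edges you want to conclude that $d(\partial\cA_n)$ and $d(\partial\cB_n)$ have the same limit. This is simply false in general, and the ``extra structural information'' you invoke (linear minimum codegree in the shadow after trimming low-degree vertices) is not available for an arbitrary family~$\cF$. The proposition is stated for general~$\cF$; nothing prevents near-extremal $\cF$-free $r$-graphs from carrying $\Theta(n^{r-1})$ shadow edges of codegree~$1$, and then a $o(n^r)$ edge-edit can shift the shadow density by a constant. So the map $j\mapsto i_j$ need not be injective, and your contradiction does not go through.

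The paper sidesteps this obstacle entirely by \emph{not} trying to prove $x_j=x_{j'}$. Instead, once two near-extremal sequences on the same vertex set are $o(n^r)$-close in edit distance, it interpolates between them: delete edges of $\cH_i(n)\setminus\cH_j(n)$ one at a time down to $\cH_i(n)\cap\cH_j(n)$, then add edges of $\cH_j(n)\setminus\cH_i(n)$ one at a time up to $\cH_j(n)$. Every intermediate $r$-graph is a subgraph of $\cH_i(n)$ or of $\cH_j(n)$, hence $\cF$-free, and still has edge density $\pi(\cF)-o(1)$; moreover a single edge modification changes the shadow density by at most $r/\binom{n}{r-1}$. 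By this discrete intermediate value argument one realises every $x\in[x_i,x_j]$ as a global maximum of~$g(\cF)$, contradicting the finiteness of~$M$. The point is that edit-closeness does not pin down the shadow density, but it does guarantee that the shadow density varies \emph{slowly} along a one-edge-at-a-time path between the two hypergraphs, and that is all one needs.
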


The proof of this result involves the \emph{edit distance} of hypergraphs: Given two $r$-graphs~$H$
and $H'$ with the same number of vertices we set 
\[
	d_1(H, H')=\min\{|H\triangle H''|\colon V(H'')=V(H) \text{ and } H''\cong H'\}.
\]
It is well known and easy to confirm that this distance satisfies the triangle inequality.
  
\begin{proof}[Proof of Proposition~\ref{PROP:number-of-maxima-and-stability-number}]
If $\cF$ is degenerate, then $g(\cF)$ is the constant function whose value is always~$0$
and $M$ is infinite. So we may assume that the Tur\'an density $y=\pi(\cF)$ is positive. 
Let us write $M=\{(x_i, y)\colon i\in [m]\}$ such that $x_1<\dots<x_m$ and $m=|M|$. 
For every $i\in [m]$ we select a good sequence $\left(\cH_i(n)\right)_{n=1}^{\infty}$ 
of $\cF$-free $r$-graphs realizing $(x_i, y)$. Without loss of generality we have $v(\cH_i(n))=n$
for every positive integer $n$. 
Now suppose for the sake of contradiction that $t = \xi(\cF)$ is smaller than $m$.

\begin{claim}\label{c:5039}
	For every $\delta>0$ there are distinct $i, j\in [m]$ and $n>1/\delta$ such that  
	\[
	d_1(\cH_i(n), \cH_j(n))\le \delta n^r 
	\quad \text{ and } \quad 
	\min\{|\cH_i(n)|, |\cH_j(n)|\}\ge (y-\delta)\binom nr.
	\] 
\end{claim}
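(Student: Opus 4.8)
The plan is to deduce the claim from the $t$-stability of $\cF$ by a pigeonhole argument, using $t<m$ together with the triangle inequality for the edit distance $d_1$.

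I would first record two uniform estimates about the sequences $\left(\cH_i(n)\right)_{n=1}^{\infty}$, $i\in[m]$. Since $y=\pi(\cF)>0$ and each of these is a good sequence realizing $(x_i,y)$ with $v(\cH_i(n))=n$, we have $|\cH_i(n)|/\binom nr\to y$; as also $\ex(n,\cF)/\binom nr\to y>0$ by the definition of the Tur\'an density, this gives $|\cH_i(n)|/\ex(n,\cF)\to 1$. Because $[m]$ is finite, for any prescribed $\epsilon>0$ and $\delta>0$ there is a threshold $n_1$ such that
\[
	|\cH_i(n)|>(1-\epsilon)\ex(n,\cF)
	\qquad\text{and}\qquad
	|\cH_i(n)|\ge(y-\delta)\tbinom nr
\]
hold for every $n\ge n_1$ and every $i\in[m]$. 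Note also that $t=\xi(\cF)\ge 1$, since for large $n$ the graph $\cH_1(n)$ is $\cF$-free with more than $(1-\epsilon)\ex(n,\cF)$ edges, so $\cF$ cannot be $0$-stable.

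Now fix $\delta>0$, which we may assume to be less than $1$, and apply the definition of $t$-stability with $\delta/2$ in the r\^ole of its parameter ``$\delta$''. This provides $r$-graphs $\cG_1(n),\dots,\cG_t(n)$ on $n$ vertices --- note that these do not depend on $\delta$ --- together with constants $\epsilon>0$ and $n_0$ such that the following holds: whenever $n\ge n_0$ and $\cH$ is an $\cF$-free $r$-graph on $n$ vertices with $|\cH|>(1-\epsilon)\ex(n,\cF)$, there is an index $k\in[t]$ for which $\cH$ can be turned into $\cG_k(n)$ by adding and removing at most $\tfrac\delta2|\cH|$ edges; in particular $d_1\bigl(\cH,\cG_k(n)\bigr)\le\tfrac\delta2|\cH|$.

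Finally I would put the pieces together. Choose any $n\ge\max\{n_0,n_1,\lceil1/\delta\rceil+1\}$, so that $n>1/\delta$ and the two displayed inequalities hold for all $i\in[m]$. For each $i\in[m]$ the stability statement yields an index $k(i)\in[t]$ with $d_1\bigl(\cH_i(n),\cG_{k(i)}(n)\bigr)\le\tfrac\delta2|\cH_i(n)|\le\tfrac\delta2\binom nr$. Since $m>t$, the pigeonhole principle produces distinct $i,j\in[m]$ with $k(i)=k(j)=:k$, and then the triangle inequality for $d_1$ gives
\[
	d_1\bigl(\cH_i(n),\cH_j(n)\bigr)
	\le d_1\bigl(\cH_i(n),\cG_k(n)\bigr)+d_1\bigl(\cG_k(n),\cH_j(n)\bigr)
	\le\tfrac\delta2\bigl(|\cH_i(n)|+|\cH_j(n)|\bigr)
	\le\delta\tbinom nr\le\delta n^r.
\]
Together with $\min\{|\cH_i(n)|,|\cH_j(n)|\}\ge(y-\delta)\binom nr$, this is exactly the assertion of the claim. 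The only subtle point is the bookkeeping with the definition of $t$-stability --- that the graphs $\cG_k(n)$ are fixed before $\delta$, that the hypothesis $|\cH_i(n)|>(1-\epsilon)\ex(n,\cF)$ is legitimate because $\pi(\cF)>0$ forces $|\cH_i(n)|/\ex(n,\cF)\to 1$, and that all of this can be arranged uniformly over the finitely many sequences --- but there is no genuine obstacle beyond that.
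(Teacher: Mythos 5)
Your proof is correct and follows essentially the same route as the paper's: invoke the definition of $t$-stability to get the anchor graphs $\cG_1(n),\dots,\cG_t(n)$, use $m>t$ and pigeonhole to find two sequences $\cH_i(n),\cH_j(n)$ that are close to the same anchor, and conclude via the triangle inequality for $d_1$. The only differences are cosmetic bookkeeping (the paper restates the stability hypothesis directly in the form $|\cH|\ge(y-\eps)\binom nr$ and $d_1\le(\delta/2)n^r$, while you translate from the literal definition with $(1-\eps)\ex(n,\cF)$ and $\delta|\cH|$; both are legitimate).
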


\begin{proof}[Proof of Claim~\ref{c:5039}]
	By the definition of $\xi(\cF)=t$ there are $n_0\in\NN$ and $\eps>0$ such that for 
	every $n\ge n_0$ there exists a family $\{\cG_1(n), \ldots, \cG_{t}(n)\}$ of $r$-graphs
	on $n$ vertices such that for every $\cF$-free $r$-graph $\cH$ with $v(\cH)=n$ 
	and $|\cH|\ge (y-\eps)\binom nr$ there is some $s\in [t]$ such 
	that $d_1(\cH, \cG_s(n))\le (\delta/2) n^r$ 
	As usual, we may suppose that $\eps\le \delta$.
	
	Now choose $n\ge n_0, \delta^{-1}$ such that for every $i\in [m]$ 
	we have $d(\cH_i(n))\ge y-\eps$. Stability allows us to select for every $i\in [m]$
	an index $s(i)\in [t]$ 
	such that $d_1(\cH_i(n), \cG_s(n))\le \delta n^r$. 
	By $t<m$ the map $i\longmapsto s(i)$ cannot be injective, i.e., there are distinct $i, j\in [m]$
	and $s\in [t]$ such that $s(i)=s(j)=s$. Now the triangle inequality yields 
	\[
		d_1(\cH_i(n), \cH_j(n))\le d_1(\cH_i(n), \cG_s(n))+d_1(\cG_s(n), \cH_j(n))\le \delta n^r,
	\]
	as desired.	 
\end{proof}

Notice that, as stated, Claim~\ref{c:5039} allows $i$ and $j$ to depend on $\delta$.
However, a quick thought reveals that there actually have to be two indices $i<j$ that work
for every $\delta>0$. Now we intend to contradict the finiteness of $M$ by 
proving $[x_i, x_j]\times\{y\}\subseteq M$. 

To this end, let $x\in [x_i, x_j]$ and a large integer $N$ be given. It suffices to construct 
an $\cF$-free $r$-graph $\cH$ satisfying $v(\cH)>N$, $d(\partial \cH)=x\pm 1/N$ and $d(\cH)=y\pm 1/N$. 
By Claim~\ref{c:5039} applied to $\delta\ll N^{-1}$ there is some $n>N$ such 
that $d_1(\cH_i(n), \cH_j(n))\le  \delta n^r$ and $\min\{|\cH_i(n)|, |\cH_j(n)|\}\ge (y-\delta)\binom nr$.
Assume without loss of generality that 
\[
|\cH_i(n)\triangle \cH_j(n)| \le \delta n^r.
\]
Now consider the following process transforming $\cH_i(n)$ into $\cH_i(n)$: Start with $\cH_i(n)$
and remove edges one by one until $\cH_i(n)\cap \cH_j(n)$ is reached. Then, keep adding edges 
one by one until you arrive at $\cH_j(n)$. Every $r$-graph occurring along the way is $\cF$-free. 
Moreover, since deleting or adding an edge can affect the size of the shadow by at most~$r$, 
in every step of the process the shadow density changer by at most $r/\binom n{r-1}$. 
Thus at some moment we 
pass an $r$-graph $\cH$ such that $|d(\partial \cH)-x|\le r/\binom n{r-1}\le \delta$. 
Finally, $d(\cH)\ge d(\cH_i(n)\cap \cH_j(n))\ge d(\cH_i(n))-|\cH_i(n)\triangle \cH_j(n)| /\binom nr
\ge y-O(\delta)$ completes the proof that $\cH$ has all desired properties.  
\end{proof}

%%%%%%%%%%%%%%%%%%%%%%%%%%%%%%%%%%%%%%%%%%%%%%%%%
\section{Concluding remarks}\label{SEC:remarks}
For every positive integer $t$ we constructed a family of
$3$-graphs $\{\cG_1,\ldots,\cG_t\}$ that have the same Lagrangian $\lambda_t$,
and we showed that there is a family $\cM_t$ of $3$-graphs whose extremal configurations 
are balanced blow-ups of
$\cG_1,\ldots,\cG_t$, and whose stability number is $\xi(\cM_t) = t$.
Notice that our choice of $\lambda_t$ is very close to $1/6$, which is the supremum of the 
Lagrangians of all $3$-graphs.
It would be interesting to find for every integer $t \ge 2$ the minimum value (if it exists) 
of $\lambda = \lambda(t)$ so that
there exists a $t$-stable family $\cF_t$ with $\pi(\cF_t) = 6\lambda$.
A result of Erd\H{o}s~\cite{E64} implies that there are no Tur\'an densities in the 
interval $(0, 2/9)$. This motivates the following question.  

\begin{problem}\label{PROB:2/9-implies-stable}
Does there exist a family $\cF$ of triple systems with $\pi(\cF) = 2/9$
but $\xi(\cF) \neq 1$?
\end{problem}

For a family $\cF$ of $r$-graphs let 
$M(\cF)=\{x\in {\rm proj}\Omega(\cF)\colon g(\cF)(x) = \pi(\cF))$ 
be the set of abscissae of the global maxima of its feasible 
region function. As we have shown here, $|M(\cF)|$ can be every finite cardinal except zero. 
In would be interesting to know whether $M(\cF)$ can be infinite and, in case the answer is 
affirmative, there immediately arise further questions. 

\begin{problem}\label{PROB:countably-many-maxima}
For $r\ge 3$ does there exist a non-degenerate family $\cF$ of $r$-graphs so that
$g(\cF)$ has infinitely many global maxima? If so, can the set $M(\cF)$ be uncountable?
Can it even contain a non-trivial interval? 
\end{problem}

Notice that if the last question on intervals has a negative answer, 
then in Proposition~\ref{PROP:number-of-maxima-and-stability-number} the assumption that $M$ 
should be finite can be omitted. In fact, it is somewhat bizarre that we do not know the following. 

\begin{problem}
	Let $\cF$ be a non-degenerate family of $r$-graphs such that $M(\cF)$ is infinite. 
	Can it nevertheless happen that $\cF$ has finite stability number? 
\end{problem}

In a forthcoming work~\cite{LMR3} we will show an extension of our results 
%(Theorems~\ref{THM:t-stability} and \ref{THM:feasible-region-Mt})
about triples systems to $r$-graphs for all $r\ge 4$
and exhibit a family $\cM_{t}^{r}$ 
that is $t$-stable such that the function~$g(\cM_{t}^{r})$
has exactly $t$-global maxima.

%%%%%%%%%%%%%%%%%%%%%%%%%%%%%%%%%%%%%%%%%%%%%%%%%
%\bibliographystyle{abbrv}
%\bibliography{t_stable}

\begin{bibdiv}
\begin{biblist}

\bib{BA75}{article}{
   author={Baranyai, Zs.},
   title={On the factorization of the complete uniform hypergraph},
   conference={
      title={Infinite and finite sets (Colloq., Keszthely, 1973; dedicated
      to P. Erd\H{o}s on his 60th birthday), Vol. I},
   },
   book={
      publisher={North-Holland, Amsterdam},
   },
   date={1975},
   pages={91--108. Colloq. Math. Soc. J\'{a}nos Bolyai, Vol. 10},
   review={\MR{0416986}},
}

\bib{BIJ17}{article}{
   author={Brandt, Axel},
   author={Irwin, David},
   author={Jiang, Tao},
   title={Stability and Tur\'{a}n numbers of a class of hypergraphs via
   Lagrangians},
   journal={Combin. Probab. Comput.},
   volume={26},
   date={2017},
   number={3},
   pages={367--405},
   issn={0963-5483},
   review={\MR{3628909}},
   doi={10.1017/S0963548316000444},
}

\bib{E64}{article}{
   author={Erd\H{o}s, P.},
   title={On extremal problems of graphs and generalized graphs},
   journal={Israel J. Math.},
   volume={2},
   date={1964},
   pages={183--190},
   issn={0021-2172},
   review={\MR{183654}},
   doi={10.1007/BF02759942},
}

\bib{ES66}{article}{
   author={Erd\H{o}s, P.},
   author={Simonovits, M.},
   title={A limit theorem in graph theory},
   journal={Studia Sci. Math. Hungar.},
   volume={1},
   date={1966},
   pages={51--57},
   issn={0081-6906},
   review={\MR{205876}},
}

\bib{ES46}{article}{
   author={Erd\"{o}s, P.},
   author={Stone, A. H.},
   title={On the structure of linear graphs},
   journal={Bull. Amer. Math. Soc.},
   volume={52},
   date={1946},
   pages={1087--1091},
   issn={0002-9904},
   review={\MR{18807}},
   doi={10.1090/S0002-9904-1946-08715-7},
}

\bib{FF84}{article}{
   author={Frankl, P.},
   author={F\"{u}redi, Z.},
   title={An exact result for $3$-graphs},
   journal={Discrete Math.},
   volume={50},
   date={1984},
   number={2-3},
   pages={323--328},
   issn={0012-365X},
   review={\MR{753720}},
   doi={10.1016/0012-365X(84)90058-X},
}

\bib{FF89}{article}{
   author={Frankl, P.},
   author={F\"{u}redi, Z.},
   title={Extremal problems whose solutions are the blowups of the small
   Witt-designs},
   journal={J. Combin. Theory Ser. A},
   volume={52},
   date={1989},
   number={1},
   pages={129--147},
   issn={0097-3165},
   review={\MR{1008165}},
   doi={10.1016/0097-3165(89)90067-8},
}

\bib{FR84}{article}{
   author={Frankl, P.},
   author={R\"{o}dl, V.},
   title={Hypergraphs do not jump},
   journal={Combinatorica},
   volume={4},
   date={1984},
   number={2-3},
   pages={149--159},
   issn={0209-9683},
   review={\MR{771722}},
   doi={10.1007/BF02579215},
}
		
\bib{KA66}{article}{
   author={Katona, G.},
   title={A theorem of finite sets},
   conference={
      title={Theory of graphs},
      address={Proc. Colloq., Tihany},
      date={1966},
   },
   book={
      publisher={Academic Press, New York},
   },
   date={1968},
   pages={187--207},
   review={\MR{0290982}},
}
			
\bib{KE11}{article}{
   author={Keevash, Peter},
   title={Hypergraph Tur\'{a}n problems},
   conference={
      title={Surveys in combinatorics 2011},
   },
   book={
      series={London Math. Soc. Lecture Note Ser.},
      volume={392},
      publisher={Cambridge Univ. Press, Cambridge},
   },
   date={2011},
   pages={83--139},
   review={\MR{2866732}},
}

\bib{KO82}{article}{
   author={Kostochka, A. V.},
   title={A class of constructions for Tur\'{a}n's $(3,\,4)$-problem},
   journal={Combinatorica},
   volume={2},
   date={1982},
   number={2},
   pages={187--192},
   issn={0209-9683},
   review={\MR{685045}},
   doi={10.1007/BF02579317},
}
		
\bib{KR63}{article}{
   author={Kruskal, Joseph B.},
   title={The number of simplices in a complex},
   conference={
      title={Mathematical optimization techniques},
   },
   book={
      publisher={Univ. of California Press, Berkeley, Calif.},
   },
   date={1963},
   pages={251--278},
   review={\MR{0154827}},
}

\bib{LM1}{article}{
	author={Liu, Xizhi},
	author={Mubayi, Dhruv},
	title={The feasible region of hypergraphs},
	eprint={1911.02090},
	note={To appear in the Journal of Combinatorial Theory Series B},
}	
				
\bib{LM2}{article}{
	author={Liu, Xizhi},
	author={Mubayi, Dhruv},
	title={A hypergraph Tur\'an problem with no stability},
	eprint={1911.07969},
	note={Submitted},	
}	

\bib{LMR2}{article}{
	author={Liu, Xizhi},
	author={Mubayi, Dhruv},
	author={Reiher, Chr.},
	title={The $\Psi$-trick in hypergraph stability},
	note={In preparation},	
}	

\bib{LMR3}{article}{
	author={Liu, Xizhi},
	author={Mubayi, Dhruv},
	author={Reiher, Chr.},
	title={Hypergraphs with many extremal configurations II.},
	note={In preparation},	
}

\bib{LS07}{article}{
   author={Lu, Linyuan},
   author={Sz\'{e}kely, L\'{a}szl\'{o}},
   title={Using Lov\'{a}sz local lemma in the space of random injections},
   journal={Electron. J. Combin.},
   volume={14},
   date={2007},
   number={1},
   pages={Research Paper 63, 13},
   review={\MR{2350453}},
}

\bib{MU06}{article}{
   author={Mubayi, Dhruv},
   title={A hypergraph extension of Tur\'{a}n's theorem},
   journal={J. Combin. Theory Ser. B},
   volume={96},
   date={2006},
   number={1},
   pages={122--134},
   issn={0095-8956},
   review={\MR{2185983}},
   doi={10.1016/j.jctb.2005.06.013},
}

\bib{MU07}{article}{
   author={Mubayi, Dhruv},
   title={Structure and stability of triangle-free set systems},
   journal={Trans. Amer. Math. Soc.},
   volume={359},
   date={2007},
   number={1},
   pages={275--291},
   issn={0002-9947},
   review={\MR{2247891}},
   doi={10.1090/S0002-9947-06-04009-8},
}

\bib{PI08}{article}{
   author={Pikhurko, Oleg},
   title={An exact Tur\'{a}n result for the generalized triangle},
   journal={Combinatorica},
   volume={28},
   date={2008},
   number={2},
   pages={187--208},
   issn={0209-9683},
   review={\MR{2399018}},
   doi={10.1007/s00493-008-2187-2},
}
	
\bib{AS95}{article}{
   author={Sidorenko, Alexander},
   title={What we know and what we do not know about Tur\'{a}n numbers},
   journal={Graphs Combin.},
   volume={11},
   date={1995},
   number={2},
   pages={179--199},
   issn={0911-0119},
   review={\MR{1341481}},
   doi={10.1007/BF01929486},
}
		
\bib{SI68}{article}{
   author={Simonovits, M.},
   title={A method for solving extremal problems in graph theory, stability
   problems},
   conference={
      title={Theory of Graphs},
      address={Proc. Colloq., Tihany},
      date={1966},
   },
   book={
      publisher={Academic Press, New York},
   },
   date={1968},
   pages={279--319},
   review={\MR{0233735}},
}

\bib{TU41}{article}{
   author={Tur\'{a}n, Paul},
   title={Eine Extremalaufgabe aus der Graphentheorie},
   language={Hungarian, with German summary},
   journal={Mat. Fiz. Lapok},
   volume={48},
   date={1941},
   pages={436--452},
   issn={0302-7317},
   review={\MR{18405}},
}
	
\bib{W1}{article}{
   author={Wilson, Richard M.},
   title={An existence theory for pairwise balanced designs. I. Composition
   theorems and morphisms},
   journal={J. Combinatorial Theory Ser. A},
   volume={13},
   date={1972},
   pages={220--245},
   issn={0097-3165},
   review={\MR{304203}},
   doi={10.1016/0097-3165(72)90028-3},
}

\bib{W2}{article}{
   author={Wilson, Richard M.},
   title={An existence theory for pairwise balanced designs. II. The
   structure of PBD-closed sets and the existence conjectures},
   journal={J. Combinatorial Theory Ser. A},
   volume={13},
   date={1972},
   pages={246--273},
   issn={0097-3165},
   review={\MR{304204}},
   doi={10.1016/0097-3165(72)90029-5},
}
	
\bib{W3}{article}{
   author={Wilson, Richard M.},
   title={An existence theory for pairwise balanced designs. III. Proof of
   the existence conjectures},
   journal={J. Combinatorial Theory Ser. A},
   volume={18},
   date={1975},
   pages={71--79},
   issn={0097-3165},
   review={\MR{366695}},
   doi={10.1016/0097-3165(75)90067-9},
}

\bib{Zy}{article}{
   author={Zykov, A. A.},
   title={On some properties of linear complexes},
   language={Russian},
   journal={Mat. Sbornik N.S.},
   volume={24(66)},
   date={1949},
   pages={163--188},
   review={\MR{0035428}},
}	
							
\end{biblist}
\end{bibdiv}

\end{document}